\begin{document}

\def\COMMENT#1{}

\newtheorem{theorem}{Theorem}[section]
\newtheorem{lemma}[theorem]{Lemma}
\newtheorem{proposition}[theorem]{Proposition}
\newtheorem{corollary}[theorem]{Corollary}
\newtheorem{conjecture}[theorem]{Conjecture}
\newtheorem{claim}[theorem]{Claim}

\def\eps{{\varepsilon}}
\newcommand{\cP}{\mathcal{P}}
\newcommand{\cT}{\mathcal{T}}
\newcommand{\cL}{\mathcal{L}}
\newcommand{\ex}{\mathbb{E}}
\newcommand{\eul}{e}
\newcommand{\pr}{\mathbb{P}}

\title{APPROXIMATE HAMILTON DECOMPOSITIONS OF ROBUSTLY EXPANDING REGULAR DIGRAPHS}
\author{DERYK OSTHUS AND KATHERINE STADEN}
\thanks{Deryk Osthus was supported by the EPSRC, grant no.~EP/J008087/1.}

\begin{abstract}
We show that every sufficiently large $r$-regular digraph $G$ which 
has linear degree and
is a robust outexpander has an approximate decomposition into edge-disjoint Hamilton cycles,
i.e.~$G$ contains a set of $r-o(r)$ edge-disjoint Hamilton cycles.
Here $G$ is a robust outexpander if for every set $S$ which is not too small and not too large, the `robust' outneighbourhood of $S$ is a little larger than $S$.
This generalises a result of K\"uhn, Osthus and Treglown on approximate Hamilton decompositions of dense regular oriented graphs.
It also generalises a result of Frieze and Krivelevich on approximate Hamilton decompositions of quasirandom (di)graphs.
In turn, our result is used as a tool by K\"uhn and Osthus to prove that any sufficiently large $r$-regular digraph $G$ which has linear degree and is a robust outexpander even has a Hamilton decomposition.
\end{abstract}

\date{\today}
\maketitle

\section{Introduction}
A Hamilton decomposition of a graph or digraph $G$ is a set of edge-disjoint Hamilton cycles which together cover all the edges of $G$. 
The first result in the area was proved by Walecki in 1892, who showed that a complete graph $K_n$ has a Hamilton decomposition if and only if $n$ is odd
(see e.g. \cite{lucas}, \cite{alspach}, \cite{abs}).
Tillson~\cite{till} solved the analogous problem for complete digraphs in 1980.
Though the area is rich in beautiful conjectures, until recently there were few general results.

Starting with a result of Frieze and Krivelevich~\cite{fk}, a very successful recent direction of research has been to find `approximate' Hamilton decompositions, 
i.e.~a set of edge-disjoint Hamilton cycles which cover almost all the edges of the given (di)graph.
The result in \cite{fk} concerns dense quasirandom graphs and digraphs. Hypergraph versions of this result were proved by Frieze, Krivelevich and Loh \cite{FKL} as well as Bal and Frieze \cite{BF}. 
Also, K\"uhn, Osthus and Treglown~\cite{kot} proved an approximate version of Kelly's conjecture.
This long-standing conjecture (see \cite{moon}) states that
every regular tournament has a Hamilton decomposition. 
In fact, the result in \cite{kot} is much more general, 
namely it states that every regular oriented graph on $n$ vertices whose in- and outdegree is slightly larger than $3n/8$ has an approximate Hamilton decomposition.
Here an oriented graph is a digraph with at most one edge between each pair of vertices (whereas a digraph may have one edge in each direction between a pair of vertices).

Our main result in turn is a far reaching generalisation of the result in~\cite{kot}.
Instead of a degree condition, it involves an expansion condition that has recently been shown to have a close connection with Hamiltonicity. 
This notion was introduced by K\"uhn, Osthus and Treglown in~\cite{KOTchvatal}. 
The condition states that for every set $S$ which is not too small and not too large, 
its `robust' outneighbourhood is at least a little larger than $S$ itself.
More precisely, suppose that $G$ is a digraph of order $n$ and $S \subseteq V(G)$. The $\nu$-\emph{robust outneighbourhood} $RN_{\nu , G}^+(S)$ of $S$ is  the set of 
vertices with at least $\nu n$ inneighbours in $S$. 
We say that $G$ is a \emph{robust $(\nu,\tau)$-outexpander}
if 
$$
|RN^+_{\nu,G}(S)|\ge |S|+\nu n \ \mbox{ for all } \ S\subseteq V(G) \ \mbox{ with } \ \tau n\le |S|\le  (1-\tau)n.
$$
Our main result states that every sufficiently large robustly outexpanding regular digraph has an approximate Hamilton decomposition.

\begin{theorem} \label{main}
For every $ \alpha >0$ there exists $\tau>0$ such that for all $\nu, \eta>0$ there exists $n_0=n_0 (\alpha,\nu,\tau,\eta)$ for which the following holds. Suppose that
\begin{itemize}
\item[{\rm (i)}] $G$ is an $r$-regular digraph on $n \ge n_0$ vertices, where $r\ge \alpha n$;
\item[{\rm (ii)}] $G$ is a robust $(\nu,\tau)$-outexpander.
\end{itemize}
Then $G$ contains at least $(1 -\eta)r$ edge-disjoint Hamilton cycles. Moreover, this set of Hamilton cycles can be found in time polynomial in $n$.
\end{theorem}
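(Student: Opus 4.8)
The plan is to use the Diregularity Lemma to pass from $G$ to a bounded-size \emph{reduced digraph} $R$ which inherits robust outexpansion and --- because $G$ is regular --- is essentially regular; to decompose almost all of $R$ into Hamilton cycles, using robust outexpansion to turn $1$-factors into Hamilton cycles; and finally to blow up each of these into a Hamilton cycle of $G$, absorbing the exceptional vertices.

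\emph{Step 1: Regularity.} Fix constants $\eps \ll d \ll 1/k_0 \ll \eta, \alpha, \nu$. Applying the Diregularity Lemma to $G$ gives clusters $V_1, \dots, V_k$ of common size $m$ (with $k_0 \le k$ bounded) and an exceptional set $V_0$ with $|V_0| \le \eps n$; let $R$ be the reduced digraph, with $V_i V_j \in E(R)$ precisely when $(V_i, V_j)$ is $\eps$-regular of density $d_{ij} \ge d$, and after passing to subpairs assume each edge of $R$ corresponds to a $(2\eps, d/2)$-super-regular pair. A routine argument shows that $R$ is a robust $(\nu/2, 2\tau)$-outexpander with minimum semidegree at least $(\alpha - 2d)k$. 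Crucially, since $G$ is $r$-regular, exactly $rm$ edges leave each $V_i$; discarding the edges meeting $V_0$, lying inside a cluster, or in an irregular or sparse pair costs only an $O(\eps + d + |V_0|/n)$ proportion of all edges and shows that $\sum_j d_{ij}$ and $\sum_j d_{ji}$ are each $(1 \pm \eta/8)\,r/m$, where $\alpha k \le r/m \le k$.

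\emph{Step 2: An approximate Hamilton decomposition of $R$.} This is the heart of the argument. Replace the weighted digraph $(R, (d_{ij}))$ by a $D$-regular multidigraph $R^*$ on $V(R)$, with $D \ge (1-\eta/8)r$ and with the multiplicity of $V_i V_j$ approximately $d_{ij} m$; then $R^*$ is a $D$-regular robust $(\nu', \tau')$-outexpander for suitable fixed $\nu', \tau' > 0$. I claim $R^*$ has at least $(1-\eta/2)D$ edge-disjoint Hamilton cycles. First set aside a spanning sub-multidigraph $R_{\mathrm{res}} \subseteq R^*$ carrying an $\eta/4$ proportion of the edges at each vertex, chosen (for instance at random) so that $R_{\mathrm{res}}$ remains a robust outexpander. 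Decompose the rest into $1$-factors $F_1, F_2, \dots$, which is possible because a regular multidigraph is an edge-disjoint union of $1$-regular spanning subdigraphs, by König's edge-colouring theorem applied to the bipartite graph of out- and in-stubs. Each $F_i$ is a union of directed cycles covering $V(R)$; using the robust outexpansion of the current reservoir one reroutes $F_i$ into a single Hamilton cycle $C_i$ of $R$, taking the new edges from $R_{\mathrm{res}}$ and returning the displaced edges to it. The delicate point --- what I expect to be the main obstacle --- is to perform all these mergings so that $C_1, C_2, \dots$ stay pairwise edge-disjoint and at most an $o(1)$ proportion of $E(R^*)$ is wasted; one checks that each exchange preserves the regularity of the reservoir and, with a sufficiently careful randomised choice of reroutings, also preserves its robust outexpansion throughout, so that all but $\le \eta D/4$ of the $1$-factors are converted.

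\emph{Step 3: Blow-up and exceptional vertices.} Each Hamilton cycle $C = V_{i_1} V_{i_2} \cdots V_{i_k} V_{i_1}$ from Step 2 has all consecutive pairs super-regular; identifying each multi-edge of $R^*$ with a near-perfect matching of $G$ between the corresponding clusters (the multiplicity $\approx d_{ij} m$ of $V_i V_j$ matches the number of edge-disjoint near-perfect matchings available in that $G$-pair), the $k$ matchings along $C$ concatenate into a spanning $1$-regular subdigraph of $G - V_0$, which a standard argument --- using super-regularity to repair the few missed vertices and to make the union connected --- turns into a Hamilton cycle of $G - V_0$, with the choices over distinct copies of $C$ made edge-disjointly. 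Finally each $v \in V_0$ is spliced into one of these cycles between a consecutive pair of vertices that are, respectively, an out- and an in-neighbour of $v$; since $v$ has at least $\alpha n$ in- and out-neighbours while $|V_0| \le \eps n$, a greedy counting argument inserts all exceptional vertices disjointly. This produces at least $(1-\eta/2)D \ge (1-\eta)r$ edge-disjoint Hamilton cycles of $G$. Every ingredient --- the Diregularity Lemma, König's theorem, bipartite matching, and the various greedy and (derandomisable) random choices --- runs in time polynomial in $n$, which yields the final assertion.
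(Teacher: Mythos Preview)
Your outline follows the same high-level strategy as the paper --- apply the diregularity lemma, work with a reduced multidigraph, and blow up --- but it diverges at the crucial step, and there are two genuine gaps.

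\textbf{The main gap is Step~2.} You reduce the problem to finding $(1-\eta/2)D$ edge-disjoint Hamilton cycles in the $D$-regular reduced multidigraph $R^*$ on $k$ vertices. But this is essentially the theorem itself, transplanted to a multidigraph; you have not made the problem easier. Your reservoir argument preserves the \emph{degree sequence} of $R_{\rm res}$, as you note, but robust outexpansion depends on the underlying simple digraph, i.e.\ on which edges have positive multiplicity. Each of your $\Theta(D)$ reroutings removes and inserts $\Theta(k)$ edges in $R_{\rm res}$, and there is no mechanism preventing a specific edge's multiplicity from being driven to zero after $o(D)$ rounds. The sentence ``one checks that \ldots\ robust outexpansion is preserved throughout'' is exactly the hard part, and you have not checked it. The paper takes a completely different route: it decomposes the reduced multidigraph only into $1$-factors (which is trivial), and does all the merging at the level of $G$ using Lemma~\ref{trick}, where the super-regularity of pairs supplies enough edges. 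The edge-disjointness of the merging steps across $\Theta(r)$ factors is guaranteed by an elaborate ``localisation'' scheme: two rounds of unwinding (Lemma~\ref{unwind}) ensure that the clusters touched by exceptional edges are far apart on each blown-up cycle, so that different $1$-factors can be merged using different super-regular pairs.

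\textbf{The second gap is the exceptional vertices.} Each $v\in V_0$ must lie on every one of your $(1-\eta)r$ Hamilton cycles, contributing one in-edge and one out-edge to each. Since $v$ has exactly $r$ in-edges and $r$ out-edges in $G$, this uses almost all of them; moreover, for each cycle you must find a \emph{consecutive} pair $a,b$ on that cycle with $av,vb\in E(G)$, and these choices must be edge-disjoint across all $v\in V_0$ and all cycles. Your ``greedy counting argument'' based on $|V_0|\le\eps n$ does not address this near-tight allocation. The paper spends Sections~\ref{redclusters}--\ref{balance} on it: exceptional edges are pre-assigned to slices via random partitions (Claims~\ref{claim1} and~\ref{claim2}), routed only into designated ``red'' $2p$-clusters satisfying a parity condition, and then balanced so that each slice becomes exactly $\kappa$-regular. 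None of this is routine.

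Finally, your algorithmic claim is undermined by both gaps: the ``sufficiently careful randomised choice of reroutings'' in Step~2 is precisely the kind of argument the paper singles out (see Section~\ref{sec:sketch}) as inherently non-algorithmic in prior work, and one of the paper's contributions is to replace it with a deterministic scheme.
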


As observed in Lemma 12.1 of~\cite{KOKelly}, every oriented graph whose in- and outdegrees are all at least slightly larger than $3n/8$ is a robust outexpander, so this does generalise the main result of~\cite{kot}.
Moreover, it turns out that one can relax condition (i) to the requirement that $G$ is `almost regular'.
This is due to the fact (observed in~\cite{KOKelly}) that every almost regular robustly expanding digraph contains a spanning regular digraph of similar degree.

\begin{corollary} \label{cor}
For every $\alpha>0$ there exists $\tau >0$ such that for all $\nu, \eta > 0$ there exist $n_0 = n_0(\alpha , \nu, \tau , \eta)$ and $\gamma = \gamma(\alpha, \nu, \tau, \eta) > 0$ for which the following holds. Suppose that
\begin{itemize}
\item[(i)] $G$ is a digraph on $n \geq n_0$ vertices with $(\alpha - \gamma)n \leq d^\pm_G(x) \leq (\alpha + \gamma)n$ for every $x$ in $G$;
\item[(ii)] $G$ is a robust $(\nu, \tau)$-outexpander.
\end{itemize}
Then $G$ contains at least $(\alpha - \eta)n$ edge-disjoint Hamilton cycles. Moreover, this set of Hamilton cycles can be found in time polynomial in $n$.
\end{corollary}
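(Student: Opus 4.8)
The plan is to reduce Corollary~\ref{cor} to Theorem~\ref{main} by first passing to a spanning regular subdigraph. Fix $\alpha>0$ and let $\tau:=\tau_{\ref{main}}(\alpha/2)$, where $\tau_{\ref{main}}(\cdot)$ denotes the function returning the constant $\tau$ of Theorem~\ref{main}; this is the $\tau$ we output. Given $\nu,\eta>0$ we may assume $\eta\le\alpha$, since otherwise $(\alpha-\eta)n\le 0$ and there is nothing to prove. Set $\gamma:=\tfrac1{10}\min\{\alpha,\nu,\eta\}$, apply Theorem~\ref{main} with $\alpha_{\ref{main}}:=\alpha/2$, $\nu_{\ref{main}}:=\nu/2$, the above $\tau$, and $\eta_{\ref{main}}:=\eta/(2\alpha)$ to obtain a constant $n_{0,\ref{main}}$, and let $n_0$ be the maximum of $n_{0,\ref{main}}$ and a constant large enough for the estimates below.

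The first step is to invoke the fact observed in~\cite{KOKelly} and quoted before the statement: since $G$ is almost regular and a robust $(\nu,\tau)$-outexpander, it contains a spanning $r$-regular subdigraph $G'$ whose degree differs from the minimum semidegree of $G$ by only $o(n)$; as all in- and outdegrees of $G$ are at least $(\alpha-\gamma)n$ and $n\ge n_0$, this gives $r\ge(\alpha-2\gamma)n$, and moreover $G'$ can be found in time polynomial in $n$. The next step is to check that $G'$ inherits robust outexpansion. Each in- and outdegree of $G$ lies in $[(\alpha-\gamma)n,(\alpha+\gamma)n]$, so at each vertex at most $3\gamma n$ in-edges (and at most $3\gamma n$ out-edges) are deleted when passing to $G'$. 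Hence $RN^+_{\nu,G}(S)\subseteq RN^+_{\nu-3\gamma,G'}(S)$ for every $S\subseteq V(G)$, and so whenever $\tau n\le|S|\le(1-\tau)n$,
\[
|RN^+_{\nu/2,G'}(S)|\ \ge\ |RN^+_{\nu-3\gamma,G'}(S)|\ \ge\ |RN^+_{\nu,G}(S)|\ \ge\ |S|+\nu n\ \ge\ |S|+(\nu/2)n,
\]
where the first inequality uses $3\gamma\le\nu/2$. Thus $G'$ is a robust $(\nu/2,\tau)$-outexpander.

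Now $G'$ is an $r$-regular digraph on $n\ge n_0\ge n_{0,\ref{main}}$ vertices with $r\ge(\alpha-2\gamma)n>(\alpha/2)n$ which is a robust $(\nu/2,\tau)$-outexpander, so Theorem~\ref{main} applies (with the parameters fixed above) and yields, in time polynomial in $n$, a set of at least $(1-\eta/(2\alpha))r$ edge-disjoint Hamilton cycles of $G'$, hence of $G$. Finally, since $\gamma\le\eta/10$,
\[
\Big(1-\tfrac{\eta}{2\alpha}\Big)r\ \ge\ \Big(1-\tfrac{\eta}{2\alpha}\Big)(\alpha-2\gamma)n\ \ge\ (\alpha-2\gamma)n-\tfrac{\eta}{2}n\ \ge\ (\alpha-\eta)n,
\]
so outputting $\tau$, $\gamma$ and $n_0$ completes the reduction.

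The only real content beyond Theorem~\ref{main} is the existence and polynomial-time construction of the spanning regular subdigraph $G'$; everything else is bookkeeping with the constant hierarchy. I therefore expect that step to be the main obstacle, at least if one does not simply cite~\cite{KOKelly} for it. To prove it directly one would model $G$ as a bipartite graph on vertex classes $V^+$ and $V^-$, with $u^+v^-$ an edge precisely when $uv\in E(G)$, so that $r$-regular spanning subdigraphs of $G$ correspond to $r$-regular spanning subgraphs of this bipartite graph; the existence of such a subgraph reduces via Max-Flow--Min-Cut to showing that no cut is too small, and here almost-regularity together with robust outexpansion is exactly what rules out small cuts. Verifying that cut condition (and doing so constructively, so as to retain the polynomial running time) is the delicate point.
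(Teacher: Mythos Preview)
Your proof is correct and follows essentially the same approach as the paper: both extract a spanning regular subdigraph of degree close to $\alpha n$ (the paper does this via Lemma~\ref{regrobust}, which is precisely the result from~\cite{KOKelly} you cite), verify that robust outexpansion is inherited since only $O(\gamma n)$ edges are removed at each vertex, and then apply Theorem~\ref{main} with $\eta'=\eta/(2\alpha)$. Your identification of the regular-subdigraph step as the only nontrivial ingredient, and of Max-Flow--Min-Cut as the tool behind it, is exactly right.
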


The result in~\cite{kot} extends to almost regular oriented graphs in the same way, but is inherently non-algorithmic
(see Section~\ref{sec:sketch}). Since, for dense digraphs, the condition of being a robust outexpander is much weaker than that of being quasirandom, Corollary~\ref{cor} is much more general than the result in \cite{fk} mentioned earlier. Moreover, it is best possible in the sense that, for an almost regular digraph, an approximate Hamilton decomposition is obviously the best one can hope for.

Theorem~\ref{main} is used as an essential tool by K\"uhn and Osthus in~\cite{KOKelly} to prove the following result, which (under the same conditions) guarantees not only an approximate decomposition, but a Hamilton decomposition.

\begin{theorem} \label{decomp}
For every $ \alpha >0$ there exists $\tau>0$ such that for every $\nu > 0$ there exists $n_0=n_0 (\alpha,\nu,\tau)$ 
for which the following holds. Suppose that
\begin{itemize}
\item[{\rm (i)}] $G$ is an $r$-regular digraph on $n \ge n_0$ vertices, where $r\ge \alpha n$;
\item[{\rm (ii)}] $G$ is a robust $(\nu,\tau)$-outexpander.
\end{itemize}
Then $G$ has a Hamilton decomposition.
Moreover, this decomposition can be found in time polynomial in $n$.
\end{theorem}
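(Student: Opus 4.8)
The proof we propose follows the \emph{approximate decomposition plus absorption} paradigm, with Theorem~\ref{main} playing the role of the approximate decomposition step; in outline this is the strategy carried out by K\"uhn and Osthus in \cite{KOKelly}, and I would use Theorem~\ref{main} as a black box.

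\textbf{Step 1: Regularity and a reduced cycle structure.} First I would apply the Regularity Lemma for digraphs to $G$ to obtain a partition of $V(G)$ into an exceptional set $V_0$ with $|V_0|\le\eps n$ and equal-sized clusters $V_1,\dots,V_k$, for a tiny constant $\eps\ll\nu,\tau,\alpha$. Since $G$ is a robust $(\nu,\tau)$-outexpander of linear degree, the reduced digraph $R$ is again a robust outexpander with only slightly worse parameters and is almost regular; in particular $R$ has a Hamilton cycle $C=V_1V_2\cdots V_kV_1$ (after relabelling) and enough further connectivity to support the auxiliary ``universal walk'' used below. Splitting each cluster into $\ell$ subclusters yields a finer partition $\cP$ on which the construction of the absorber is coordinated.

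\textbf{Step 2: Reserve a robustly decomposable subdigraph.} The heart of the argument is to extract from $G$ a \emph{sparse} spanning subdigraph $G^{\mathrm{rob}}$, the robustly decomposable digraph, of some degree $r^{\mathrm{rob}}\le\zeta' n$ with $\zeta'$ small, with the following absorbing property: there is $\zeta>\zeta'$ such that $G^{\mathrm{rob}}\cup H$ has a Hamilton decomposition for every spanning subdigraph $H$ of $G-G^{\mathrm{rob}}$ that is $d$-regular with $d\le\zeta n$ (and possibly $d$ at least some small $d_0$, and possibly somewhat well-distributed with respect to $\cP$; the latter one can guarantee for the leftover in Step~3 by reserving in advance a flexible sparse regular subdigraph along $C$ and re-merging it with the leftover). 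Building $G^{\mathrm{rob}}$ is exactly the content of the \emph{robust decomposition lemma} of \cite{KOKelly}: one would construct, along the cycle structure $C$, a ``chord absorber'' and a ``parity extender'', together with a bounded number of ``exceptional factors'' that route short paths through $V_0$, and then show --- crucially using robust expansion for the routing --- that the flexibility of these gadgets lets any such $H$ be closed up into genuine Hamilton cycles of $G$. Note that $V_0$ need not be contained in $G^{\mathrm{rob}}$: it is handled \emph{structurally} by the exceptional factors, while $G-G^{\mathrm{rob}}$ remains a digraph on all of $V(G)$.

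\textbf{Step 3: Approximate decomposition and absorption.} The digraph $G_1:=G-G^{\mathrm{rob}}$ is $(r-r^{\mathrm{rob}})$-regular on $V(G)$, and since $r^{\mathrm{rob}}$ is small compared with $\nu n$ it is still a robust outexpander of linear degree. I would apply Theorem~\ref{main} to $G_1$ with $\eta$ chosen small enough (in terms of $\zeta$ and $\alpha$) that $\eta(r-r^{\mathrm{rob}})\le\zeta n/2$; this yields $t\ge(1-\eta)(r-r^{\mathrm{rob}})$ edge-disjoint Hamilton cycles of $G_1$ --- hence of $G$ --- whose removal leaves a digraph $H$ that is $(r-r^{\mathrm{rob}}-t)$-regular with $r-r^{\mathrm{rob}}-t\le\zeta n/2$. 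After, if necessary, absorbing the flexible reserved subdigraph into $H$ to meet the hypotheses of Step~2, the robust decomposition lemma applied to $G^{\mathrm{rob}}\cup H$ produces a Hamilton decomposition of this digraph into $r^{\mathrm{rob}}+(r-r^{\mathrm{rob}}-t)=r-t$ Hamilton cycles of $G$. Together with the $t$ Hamilton cycles from Theorem~\ref{main} this gives $r$ edge-disjoint Hamilton cycles covering every edge of $G$, i.e.\ a Hamilton decomposition. Since the Regularity Lemma, the construction of $G^{\mathrm{rob}}$, Theorem~\ref{main}, and the decomposition of $G^{\mathrm{rob}}\cup H$ are all algorithmic, the decomposition is obtained in time polynomial in $n$.

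\textbf{Main obstacle.} The hard part will be Step~2: designing $G^{\mathrm{rob}}$ so that it has the claimed (essentially universal) absorbing property while remaining sparse, and in particular dealing with the exceptional set $V_0$ and with the parity obstructions that arise when local path systems are merged into spanning cycles. A second delicate point will be checking in Step~3 that the otherwise uncontrolled leftover of Theorem~\ref{main} can be brought into the form required by the robust decomposition lemma --- Theorem~\ref{main} says nothing about how its output meets the partition $\cP$ --- which is why a flexible sparse subdigraph should be set aside at the outset.
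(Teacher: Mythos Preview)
Your proposal is correct and follows essentially the same approach as the one the paper sketches. Note, however, that this paper does \emph{not} actually prove Theorem~\ref{decomp}: it only states the result, attributes the proof to K\"uhn and Osthus~\cite{KOKelly}, and gives a one-paragraph outline (remove a sparse ``robustly decomposable'' subdigraph $H$, apply Theorem~\ref{main} to the remainder, then use the absorbing property of $H$ to decompose $H$ together with the leftover). Your Steps~1--3 reproduce this outline faithfully, and your identification of Step~2 --- the construction and analysis of the robustly decomposable digraph --- as the genuine difficulty is exactly right. One minor point: the paper's sketch asserts that $H$ absorbs an \emph{arbitrary} sparse regular leftover $H'$, so your worry about the leftover of Theorem~\ref{main} being ill-distributed with respect to $\cP$ is, at least at the level of the sketch, unnecessary; whether the actual robust decomposition lemma in~\cite{KOKelly} needs the extra ``flexible reserve'' you describe is a detail of that paper rather than of this one.
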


So as a special case, Theorem~\ref{decomp} implies that Kelly's conjecture holds for all sufficiently large regular tournaments.
It also implies a conjecture of Erd\H{o}s on Hamilton decompositions of regular tournaments.
However, it turns out that the notion of robust (out)expansion extends far beyond the class of tournaments and many further applications 
of Theorem~\ref{main} are explored by K\"uhn and Osthus in~\cite{KellyII}.
For example, the notion of robust expansion can be extended to undirected graphs in a natural way
and one can deduce a version of Theorem~\ref{decomp} for undirected graphs.
In~\cite{KellyII} this in turn is used to prove an approximate version of a conjecture of Nash-Williams on Hamilton decompositions of dense regular graphs.
Random regular graphs of linear degree as well as $(n,d,\lambda)$-graphs (for appropriate values of these parameters) are further examples of robustly expanding graphs.
In combination with a result of Gutin and Yeo~\cite{GutinYeo}, Theorem~\ref{decomp} can also be used to solve a problem of Glover and Punnen~\cite{GP} as well as Alon, Gutin and Krivelevich~\cite{AGK} on TSP tour domination (see~\cite{KOKelly} for details).
For this application, it is crucial that the Hamilton decomposition can be found in polynomial time. 

Roughly speaking, the argument leading to Theorem~\ref{decomp} uses  Theorem~\ref{main} in the following way:
let $G$ be a robustly expanding digraph. The first step is to remove a `robustly decomposable' spanning regular digraph $H$ from $G$ to obtain $G'$.
$H$ will be sparse compared to $G$ and will have the property that it has a Hamilton decomposition even if we add the edges of a digraph $H'$,
which is very sparse compared to $H$ and also regular (on the same vertex set) but otherwise arbitrary.
Now $G'$ is still a robust outexpander, so one can apply Theorem~\ref{main} to $G'$ obtain an approximate Hamilton decomposition of $G'$.
Let $H'$ denote the set of edges not contained in any of the Hamilton cycles of this approximate decomposition of $G'$.
Then the fact that $H$ is robustly decomposable implies that $H \cup H'$ has a Hamilton decomposition.
Together with the approximate decomposition of $G'$, this yields a Hamilton decomposition of the entire digraph $G$.
Note that the above approach means that for Theorem~\ref{decomp} to be algorithmic, one needs Theorem~\ref{main} to be algorithmic too.

This paper is organised as follows.
In the next section, we give a brief outline of the argument.
We then collect the necessary tools in Section~\ref{sec:regularity} (which is mostly concerned with Szemer\'edi's regularity lemma)
and Section~\ref{sec:tools} (which mainly collects properties of robust outexpanders).
We then prove Theorem~\ref{main} in Section~\ref{sec:mainproof}.
In Section~\ref{strengthening}, we deduce Corollary~\ref{cor} from Theorem~\ref{main}.


\section{Sketch of the proof of Theorem~\ref{main}} \label{sec:sketch}

Roughly speaking, the strategy of the proof of Theorem~\ref{main} is the following.
Suppose that a digraph $G$ satisfies the conditions of Theorem~\ref{main}. 
First remove the edges of a carefully chosen spanning sparse subdigraph $H$ from $G$
and let $G'$ consist of the remaining edges of $G$.
Next, find an approximate decomposition of $G'$ into edge-disjoint $1$-factors $F_i$
(where a $1$-factor is a spanning union of vertex-disjoint cycles).
Finally, the aim is to transform each $F_i$ into a Hamilton cycle by removing
some of its edges and adding some edges of $H$.
One immediate obstacle to a na\"ive implementation of this approach is that the $F_i$
might consist of many cycles, so turning each of them into 
a Hamilton cycle might require more edges from $H$ than one can afford.
In~\cite{fk,kot}, this was overcome (loosely speaking) by choosing the $1$-factors
$F_i$ randomly. It turns out that this has the advantage
that the $F_i$ will have few cycles, i.e.~they are already close to being Hamilton
cycles. One disadvantage is that this approach is inherently non-algorithmic
(and does not seem derandomisable).

A second problem is how to make sure that $H$ contains the edges that are required
to transform each $F_i$ into a Hamilton cycle.
We overcome this by choosing $H$ and the $1$-factors $F_i$ according to the vertex
partition of $G$ obtained from Szemer\'edi's regularity lemma.
More precisely, we apply the regularity lemma to partition $G$ into clusters $V_1
, \ldots , V_L$ of vertices such that almost all ordered pairs of clusters
induce a pseudorandom subdigraph of $G$, together with a small (but typically
troublesome) exceptional set $V_0$. 
We define the `reduced multidigraph' $R(\beta)$ whose vertices are the clusters
$V_j$ with (multiple) edges from $V_j$ to $V_k$ 
if the corresponding subdigraph of $G$ is pseudorandom and dense. Here the number of
edges from $V_j$ to $V_k$ is proportional to the density of $G[V_j , V_k]$. 
So each edge of $R(\beta)$ corresponds to a bipartite pseudorandom digraph between the
corresponding pair of clusters in $G$ (where all these pseudorandom digraphs have
same density $\beta$). 
$R(\beta)$ inherits many of the properties of $G$, in particular it is an almost
regular robust outexpander with large minimum semidegree. 

The next step is to use the Max-Flow-Min-Cut Theorem to find a spanning regular subdigraph of
$R(\beta)$ which contains almost all edges of $R(\beta)$. 
We can now (arbitrarily) partition this regular subdigraph into a collection of edge-disjoint
1-factors ${F}_i$ of $R(\beta)$ (see Section~\ref{begin}).
Each of the ${F}_i$  corresponds to a vertex-disjoint collection of `blown-up'
cycles which spans most of $V(G)$.
We will denote each of these collections by $G_i$ and call $G_i$ the $i$th slice of
$G$. Note that the $G_i$ are all edge-disjoint.

Roughly speaking, the aim is to add a small number of edges (which do not lie in any
of the other slices)
to each $G_i$ to transform $G_i$ into a regular digraph which has an approximate
Hamilton decomposition.
Together, these approximate Hamilton decompositions of the slices then yield an approximate Hamilton decomposition of
$G$.
In Section~\ref{sec:H}, we put aside three sparse subdigraphs $H_0, H_1, H_2$ which we
will use to add the required edges to each $G_i$.
So together, $H_0$, $H_1$ and $H_2$ play the role of the digraph $H$ mentioned earlier. 

So far we have ignored the exceptional vertices, but to obtain a regular spanning
subdigraph we need to incorporate them into each slice $G_i$.
For convenience, we call any exceptional vertex $x \in V_0$ and each edge incident
with $V_0$ `red'.
In Sections~\ref{exceptional} and~\ref{exceptional2}, we will add red edges to each $G_i$
in such a way that the resulting slice $G_i$ is almost regular and only a small part
of each cluster is incident to any red edges. 
Some of these edges come from $H_1$ and the others will be edges of $G$ which are
not contained in any of the $H_j$ or any of the $G_i$ constructed so far. 

Together with these red edges, each $G_i$ is now an almost regular digraph consisting
mainly of a union of blown-up cycles. 
On the other hand, $G_i$ may not even be connected. But to guarantee many edge-disjoint Hamilton
cycles in $G_i$, we clearly need to have sufficiently many edge-disjoint 
paths between these blown-up cycles. For this, we define an ordering of the cycles
$D_1,\dots,D_\ell$ of $F_i$ and specify `bridge vertices' $x_{i,j}$ (one for each
successive pair of cycles)
so that $x_{i,j}$ has many inneighbours in $D_j$ and many outneighbours in
$D_{j+1}$. We find the edges incident to these $x_{i,j}$ within $H_0$ (see
Section~\ref{connect}).

We would now like to find a spanning regular subdigraph in each $G_i$ whose degree is
almost as large as that of $G_i$.
Trivially, this regular subdigraph would then have a decomposition into $1$-factors.  
However, as we have little control over the red edges added so far, they may prevent
us from finding a regular subdigraph
(see Section \ref{shadow} for a discussion and an example).
For this reason, we add extra (red) edges to $G_i$ from $H_2$ to balance out the
existing red edges. 
In this way, we can ensure that for each cluster $V$ of a blown-up cycle $D$, the
number of edges leaving $V$ in $G_i$ 
equals the number of edges entering its successor $V^+$ on $D$. This is achieved in
Sections~\ref{shadow} and \ref{balance},
by considering an auxiliary reduced digraph $R^*$ which also turns out to be a
robust outexpander
(the latter property is crucial here).

As indicated above, in Section~\ref{aldecomp}, we can now
find a spanning $\kappa$-regular subdigraph $G_i^*$ of each $G_i$ (for a suitable $\kappa$). We now decompose each $G_i^*$
into
$1$-factors $f_{i,1},\dots,f_{i,\kappa}$.
Our aim is to transform each $f_{i,j}$ into a Hamilton cycle by adding and removing
a few edges. 
The edges we add will be taken from a very sparse digraph $H_{3,i}$ which we removed
from $G_i$ earlier
(so $H_{3,i}$ can also be viewed as a union of blown-up cycles).
The key point of the proof is that we can achieve this transformation by using a very small number
of edges from $H_{3,i}$ for each $f_{i,j}$.
The reason for this is that we can guarantee that the red edges added in the course
of the proof are `localised' within each $G_i$,
i.e.~on each blown-up cycle of each $F_i$ there are long intervals of clusters
which are not incident to any red edges.
This means that for each $1$-factor $f_{i,j}$, its subdigraph induced by any such
interval $I$ consists of long paths.
If some of these paths lie on different cycles of $f_{i,j}$, we can merge these into
a single cycle by adding and removing edges of $H_{3,i}$
which are induced by just a single pair of consecutive clusters on $I$.
Crucially, this enables us to use the bipartite subdigraphs of $H_{3,i}$ induced by
other pairs of consecutive clusters on $I$ to transform  
other $1$-factors $f_{i,j'}$ of the slice $G_i$.
Repeating this process until we have merged all cycles of $f_{i,j}$ into a single
cycle eventually  transforms the $f_{i,j}$ into 
$\kappa$ edge-disjoint Hamilton cycles, as required (see Lemma~\ref{trick} and Section~\ref{mergeH}).

An approach based on the regularity lemma was already used in~\cite{kot}. However, as
mentioned earlier, the argument there relied on a random choice of the
$1$-factors, which did not translate into an algorithm. This problem is overcome by
the above `localisation' idea, which automatically produces $1$-factors
which are `well behaved' with respect to red edges in the sense described above.
However, this `localisation property' is quite difficult to achieve and relies on
additional ideas such as a refinement of the original regularity partition and a special `unwinding' of blown-up cycles
(see Section~\ref{unwinding} and Lemma~\ref{unwind}).

\section{Notation and the diregularity lemma} \label{sec:regularity}

\subsection{Notation}

Throughout we will omit floors and ceilings where the argument is unaffected. The constants in the hierarchies used to state our results are chosen from right to left.
For example, if we claim that a result holds whenever $0<1/n\ll a\ll b\ll c\le 1$ (where $n$ is the order of the graph or digraph), then 
there are non-decreasing functions $f:(0,1]\to (0,1]$, $g:(0,1]\to (0,1]$ and $h:(0,1]\to (0,1]$ such that the result holds
for all $0<a,b,c\le 1$ and all $n\in \mathbb{N}$ with $b\le f(c)$, $a\le g(b)$ and $1/n\le h(a)$. 
Hierarchies with more constants are defined in a similar way. 
Note that $a \ll b$ implies that we may assume in the proof that e.g. $a < b$ or $a < b^2$.
We write $a = b \pm \eps$ for $a \in [b - \eps , b + \eps]$. 

For an undirected graph $G$ containing a vertex $x$ we write $N_G(x)$ for the neighbourhood of $x$ and $d_G(x)$ for its degree. For a digraph $G$ we write $xy$ for the edge directed from $x$ to $y$ and write $N^+_G(x)$ for the \emph{outneighbourhood}, the set of vertices receiving an edge from $x$, and write $d^+_G(x) := \vert N_G^+(x) \vert$ for the \emph{outdegree} of $x$. We define the \emph{inneighbourhood} $N^-_G(x)$ and \emph{indegree} $d^-_G(x)$ similarly. 
For a collection of vertices $U \subseteq V(G)$ we write $d_G^+(U)$ for the total number of edges sent out by the vertices in $U$. We define $d^-_G(U)$ analogously.
We will omit the $G$ subscript in the above and in similar situations elsewhere if this is unambiguous.
Denote the minimum outdegree by $\delta^+(G)$ and the minimum indegree by $\delta^-(G)$. Let the \emph{minimum semidegree} $\delta^0(G)$ be the minimum of $\delta^+(G)$ and $\delta^-(G)$. Denote the maximum outdegree by $\Delta^+(G)$ and define $\Delta^-(G)$ and analogously. Let $\Delta^0(G)$ denote the maximum of $\Delta^+(G)$ and $\Delta^-(G)$. If $G$ is a multidigraph then neighbourhoods are multisets. For any positive integer $r$, an $r$-\emph{regular} digraph on $n$ vertices is such that every vertex has exactly $r$ outneighbours and $r$ inneighbours. 
A \emph{1-factor} of a multidigraph $G$ is a 1-regular spanning digraph; that is, a collection of vertex-disjoint cycles that together contain all the vertices of $G$.

If $G$ is a multidigraph and $U \subseteq V(G)$, we write $G[U]$ for the sub-multidigraph of $G$ \emph{induced} by $U$. That is, the digraph with vertex set $U$ and edge set obtained from $E(G)$ by including only those edges with both endpoints contained in $U$.
If $G[U]$ has empty edge set, we say that $U$ is an \emph{isolated} subset of $G$.
If $G$ is a digraph and $U \subseteq V(G)$ we write $G \setminus U$ for the digraph with vertex set $V(G) \setminus U$ and edge set obtained from $E(G)$ by deleting all edges incident to a vertex of $U$.

Given a digraph $R$ and a positive integer $r$, the \emph{$r$-fold blow-up} $r \otimes R$ of $R$ is the digraph obtained from $R$ by replacing every vertex $x$ of $R$ by $r$ vertices and replacing every edge $xy$ of $R$ by the complete bipartite graph $K_{r,r}$ between the two sets of $r$ vertices corresponding to $x$ and $y$ such that all the edges of $K_{r,r}$ are oriented towards the $r$ vertices corresponding to $y$. 
We say that any edge in this $K_{r,r}$ is \emph{contained in the blow-up of} $xy$. 
Now consider the case when $V_1, \ldots, V_k$ is a partition of some set $V$ of vertices and $R$ is a digraph whose vertices are $V_1, \ldots, V_k$. If $R$ is a directed cycle, say $R = C = V_1 \ldots V_k$, and $G$ is a digraph with $V(G) \subseteq V = V_1 \cup \ldots \cup V_k$, we say that \emph{(the edges of) $G$ wind(s) around $C$} if, for every edge $xy$ of $G$, there exists an index $j$ such that $x \in V_j$ and $y \in V_{j+1}$.

\subsection{A Chernoff bound and its derandomisation}
In the proof of Claims~\ref{claim1} and~\ref{claim2}, we will use the following standard Chernoff type bound (see e.g.~Corollary 2.3 in~\cite{JLR} and Theorem~2.2 in~\cite{SrSt}).

\begin{proposition} \label{chernoff} 
Suppose $X$ has binomial distribution and $0<a<1$. Then
$$
\mathbb{P}(X  \ge (1+a)\mathbb{E}X) \le e^{-\frac{a^2}{3}\mathbb{E}X}
\mbox{ and } \mathbb{P}(X  \le (1-a)\mathbb{E}X) \le e^{-\frac{a^2}{3}\mathbb{E}X}.
$$
\end{proposition}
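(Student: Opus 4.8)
The plan is to prove this by the standard exponential-moment (Chernoff) method. Write $X=\sum_{i=1}^n X_i$ as a sum of independent Bernoulli variables with $\pr(X_i=1)=p_i$, and put $\mu:=\ex X=\sum_i p_i$; we may assume $\mu>0$, since otherwise both inequalities are trivial.

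For the upper tail, first I would observe that for any $t>0$, Markov's inequality applied to $e^{tX}$ gives
$$\pr\big(X\ge (1+a)\mu\big)=\pr\big(e^{tX}\ge e^{t(1+a)\mu}\big)\le e^{-t(1+a)\mu}\,\ex e^{tX}.$$
By independence and the inequality $1+x\le e^x$,
$$\ex e^{tX}=\prod_{i=1}^n\ex e^{tX_i}=\prod_{i=1}^n\big(1+p_i(e^t-1)\big)\le\prod_{i=1}^n e^{p_i(e^t-1)}=e^{\mu(e^t-1)}.$$
Substituting the near-optimal value $t=\ln(1+a)>0$ then yields
$$\pr\big(X\ge (1+a)\mu\big)\le\left(\frac{e^a}{(1+a)^{1+a}}\right)^{\!\mu}=\exp\!\Big(\mu\big(a-(1+a)\ln(1+a)\big)\Big).$$
The lower tail is entirely analogous: for $t<0$ the same bound on $\ex e^{tX}$ gives $\pr(X\le(1-a)\mu)\le e^{\mu(e^t-1)-t(1-a)\mu}$, and the choice $t=\ln(1-a)<0$ yields $\pr(X\le(1-a)\mu)\le\exp\!\big(\mu(-a-(1-a)\ln(1-a))\big)$.

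It then remains to verify the two elementary scalar inequalities $a-(1+a)\ln(1+a)\le -a^2/3$ and $-a-(1-a)\ln(1-a)\le -a^2/2\le -a^2/3$, valid for all $0<a<1$. Each follows either by comparing the Taylor expansions of $(1\pm a)\ln(1\pm a)$ about $a=0$ (the left-hand sides are $-a^2/2+O(a^3)$ of the right sign), or by checking that the relevant single-variable function vanishes at $a=0$ and has the correct sign of derivative throughout $(0,1)$. The hard part will be nothing more than this routine calculus check; the probabilistic content is just the textbook Chernoff argument. Alternatively, since the bound is completely standard, one may simply quote Corollary~2.3 of~\cite{JLR} or Theorem~2.2 of~\cite{SrSt}.
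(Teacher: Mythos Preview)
Your proof is correct: this is the standard exponential-moment derivation of the Chernoff bound, and the two scalar inequalities you isolate are indeed routine calculus (the first one needs a second-derivative check rather than a pure first-derivative argument, but that is still elementary). The paper, however, does not prove this proposition at all; it simply cites it as Corollary~2.3 of~\cite{JLR} and Theorem~2.2 of~\cite{SrSt}, exactly the alternative you mention at the end. So you have supplied strictly more than the paper does, and your final sentence already matches the paper's own treatment.
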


To obtain an algorithmic version of Theorem~\ref{main}, we need to `derandomise' our applications of Proposition~\ref{chernoff}.
This can be done via the well known `method of conditional probabilities.'
The following result of Srivastav and Stangier (Theorem~2.10 in~\cite{SrSt}) provides a convenient way to apply this method.
It implies that any construction based on a polynomial number of applications of Proposition~\ref{chernoff} can
be derandomised to provide a polynomial time algorithm.

Suppose we are given $N$ independent $0/1$ random variables $X_1,\dots,X_N$ where $\mathbb{P} (X_j=1)= p$ and $\mathbb{P} (X_j =0)=1-p$
for some rational $0 \le p \le 1$. Suppose that $1 \le i \le m$. Let $w_{ij} \in \{0,1\}$.
Denote by $\phi_i$ the random variables $\phi_i:=\sum_{j=1}^N w_{ij}X_j$. Fix $\beta_i$ with $0 < \beta_i < 1$.
Now let $E_i^+$ denote the event that $\phi_i \ge (1+\beta_i)\ex [ \phi_i]$ and 
let $E_i^-$ denote the event that $\phi_i \le (1-\beta_i)\ex [ \phi_i]$.
Let $E_i$ be  either $E_i^+$ or $E_i^-$. 

\begin{theorem}{\cite{SrSt}} \label{derandom}
Let $E_1,\dots,E_m$ be events such that
\begin{equation*} \label{eq:3.3}
\sum_{i=1}^m e^{- \beta^2_i \ex(\phi_i)/3} \le 1/2.
\end{equation*}
Then $$\mathbb{P} \left(\bigcap_{i=1}^m E_i\right) \ge 1/2$$
and a vector $x \in  \bigcap_{i=1}^m E_i$ can be constructed in time $O (m N^2 \log (mN) )$.
\end{theorem}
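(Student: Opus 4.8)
The plan has two parts: a one-line probabilistic argument and a more substantial algorithmic one. For the probabilistic assertion, note that after discarding the indices $j$ with $w_{ij}=0$ each $\phi_i$ is a sum of i.i.d.\ Bernoulli$(p)$ variables and so has a binomial distribution; hence Proposition~\ref{chernoff} gives $\pr(E_i)\le e^{-\beta_i^2\ex(\phi_i)/3}$, and the hypothesis together with a union bound yields that with probability at least $1/2$ none of the $m$ tail events occurs (reading `$E_i$' in the theorem as the favourable event complementary to $E_i^+$ or $E_i^-$). The content of the theorem is the algorithmic claim, which I would obtain by derandomising the exponential-moment proof of Proposition~\ref{chernoff} via the method of conditional expectations, using a Raghavan-style pessimistic estimator.

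First I would fix the estimator. For each $i$ choose $\lambda_i:=\log(1+\beta_i)>0$ if $E_i=E_i^+$ and $\lambda_i:=\log(1-\beta_i)<0$ if $E_i=E_i^-$, and write $s_i:=(1\pm\beta_i)\ex(\phi_i)$ for the corresponding threshold, so that Markov's inequality applied to $e^{\lambda_i\phi_i}$ (together with the independence of the $X_j$) gives
\[
\pr(E_i) \le e^{-\lambda_i s_i}\,\ex\bigl[e^{\lambda_i\phi_i}\bigr] = e^{-\lambda_i s_i}\prod_{j=1}^{N}\bigl(1-p+p\,e^{\lambda_i w_{ij}}\bigr) \le e^{-\beta_i^2\ex(\phi_i)/3},
\]
the last step being the standard one-variable estimate $e^{\delta}/(1+\delta)^{1+\delta}\le e^{-\delta^2/3}$, resp.\ $(1-\delta)^{-(1-\delta)}e^{-\delta}\le e^{-\delta^2/2}$, valid for $0<\delta<1$. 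Then, processing the coordinates one at a time, I would define for a partial assignment $X_1=x_1,\dots,X_t=x_t$ the potential
\[
\Phi^{(t)} := \sum_{i=1}^{m}e^{-\lambda_i s_i}\,e^{\lambda_i\sum_{j\le t}w_{ij}x_j}\prod_{j>t}\bigl(1-p+p\,e^{\lambda_i w_{ij}}\bigr).
\]

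Three short observations then finish the construction. First, $\Phi^{(0)}\le\sum_i e^{-\beta_i^2\ex(\phi_i)/3}\le 1/2$ by hypothesis. Second, the $i$th summand of $\Phi^{(t)}$ equals $\ex[e^{-\lambda_i s_i}e^{\lambda_i\phi_i}\mid x_1,\dots,x_t]$, and since $e^{-\lambda_i s_i}e^{\lambda_i\phi_i}$ pointwise dominates the indicator of $E_i$ (as $\lambda_i$ has the sign making $\lambda_i(\phi_i-s_i)\ge 0$ on $E_i$), this summand dominates $\pr(E_i\mid x_1,\dots,x_t)$; in particular the $i$th term of $\Phi^{(N)}=\sum_i e^{-\lambda_i s_i}e^{\lambda_i\phi_i(x)}$ is at least $1$ exactly when $E_i$ holds at the final assignment $x$, so $\Phi^{(N)}<1$ forces $x$ to avoid every $E_i$. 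Third --- the crucial point --- since $\ex[e^{\lambda_i w_{i,t+1}X_{t+1}}]=1-p+p\,e^{\lambda_i w_{i,t+1}}$ and the $X_j$ are independent, one has the identity $\ex_{X_{t+1}}[\Phi^{(t+1)}\mid x_1,\dots,x_t]=\Phi^{(t)}$; hence at least one of the two values $x_{t+1}\in\{0,1\}$ keeps the potential from increasing, and the algorithm takes it. Iterating over all $N$ coordinates gives $\Phi^{(N)}\le\Phi^{(0)}\le 1/2<1$, so the constructed point $x$ lies on the favourable side of all $m$ thresholds, as required.

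For the running time one maintains, for each $i$, the single number forming the $i$th summand of $\Phi^{(t)}$: committing to $X_{t+1}=x_{t+1}$ updates it by dividing out $1-p+p\,e^{\lambda_i w_{i,t+1}}$ and multiplying in $e^{\lambda_i w_{i,t+1}x_{t+1}}$, so each of the $N$ rounds uses $O(m)$ arithmetic operations plus an $O(m)$-time comparison of the two candidate potentials. I expect the only genuinely delicate step to be the accompanying bit-complexity bookkeeping: since $p$ is rational of polynomially bounded denominator one rounds each $e^{\lambda_i}$ to $O(\log(mN))$ bits, and one must then verify that the greedy step still provably keeps the rounded potential non-increasing round by round --- which is arranged by keeping the per-step rounding error small relative to the gap between $\Phi^{(0)}\le 1/2$ and the target $1$, so that the error accumulated over $N$ rounds can never lift $\Phi^{(N)}$ to $1$. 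A careful implementation along these lines yields the stated bound $O(mN^2\log(mN))$; everything else is the routine exponential-moment computation and the conditional-expectation identity for $\Phi^{(t)}$.
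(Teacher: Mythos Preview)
The paper does not prove this theorem at all: it is quoted verbatim as Theorem~2.10 of Srivastav and Stangier~\cite{SrSt} and used as a black box, so there is no ``paper's own proof'' to compare against. Your sketch is essentially the standard Raghavan-style pessimistic-estimator derandomisation that underlies the cited result, and the argument you give is correct; in particular your identification of the optimal $\lambda_i=\log(1\pm\beta_i)$, the martingale-type identity $\ex_{X_{t+1}}[\Phi^{(t+1)}]=\Phi^{(t)}$, and the observation that $\Phi^{(N)}<1$ forces every tail event to be avoided are all sound, as is your reading of the statement (the $E_i$ in the conclusion must be taken as the favourable complements of $E_i^\pm$, otherwise the assertion is false).

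The one place where your write-up is genuinely a sketch rather than a proof is exactly where you flag it: the bit-complexity accounting. You correctly note that $e^{\lambda_i}=1\pm\beta_i$ is rational once $\beta_i$ is, but the running product $\Phi^{(t)}$ can have denominators growing like $q^{N}$, so to stay within the stated time bound one must work with $O(\log(mN))$-bit approximations and track the rounding error. Your proposed remedy --- keep the per-round error small enough that the cumulative drift over $N$ rounds cannot lift $\Phi^{(N)}$ from $\le 1/2$ up to $1$ --- is the right idea, but carrying it out carefully (so that the greedy comparison at each step is provably correct under rounding) is the actual technical content of~\cite{SrSt}; without it the $O(mN^2\log(mN))$ claim is not yet established.
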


In general, it will usually be clear that the proofs can be translated into polynomial time algorithms.  
We do not prove an explicit bound on the time needed to find the 
set of edge-disjoint Hamilton cycles guaranteed by Theorem~\ref{main}, apart from the fact that the time is polynomial in $n$.

\subsection{The diregularity lemma} 

We will use the directed version of Szemer\'edi's regularity lemma. To state it we need some definitions. We write $d_G(A,B)$ for 
the \emph{density} $\frac{e_G(A,B)}{\vert A \vert \vert B \vert}$ of an undirected bipartite graph $G$ with vertex classes $A$ and $B$. 
Given $\eps > 0$ we say that $G$ is 
$\eps$\emph{-regular} if every $X \subseteq A$ and $Y \subseteq B$ with $\vert X \vert \geq \eps \vert A \vert$ 
and $\vert Y \vert \geq \eps \vert B \vert$ satisfy $\vert d(A,B) - d(X,Y) \vert \leq \eps$.  
Given $\eps, d \in (0,1)$ we say that $G$ is $(\eps, d)$-regular if $G$ is 
$\eps$-regular and $d_G(A,B) = d \pm \eps$. We say that $G$ is $(\eps , d )$\emph{-superregular} if both of the following hold:
\begin{itemize} 
\item $G$ is $(\eps,d)$-regular;
\item $d(a) =(d \pm \eps) \vert B \vert$, $d(b) =(d \pm \eps) \vert A \vert$ for all $a \in A$, $b \in B$.
\end{itemize}
Given disjoint vertex sets $A$ and $B$ in a digraph $G$, write $(A,B)_G$ for the oriented bipartite subgraph of $G$ whose vertex classes are $A$ and $B$ and whose edges are all those from $A$ to $B$ in $G$. We say that $(A,B)_G$ has any of the regularity properties above if the requirements hold for the underlying undirected bipartite graph of $(A,B)_G$.

The diregularity lemma is a variant of the regularity lemma for digraphs due to Alon and Shapira~\cite{AS}. 
We will use the degree form which can be derived from the standard version in the same manner as the undirected degree form. 
The proof of the diregularity lemma itself is similar to the undirected version.

\begin{lemma} \label{direg}
\emph{(Degree form of the diregularity lemma)} For every $\eps \in (0,1)$ and every integer $M^\prime$ there are integers $M$ and $n_0$ such that if $G$ is a digraph on $n \geq n_0$ vertices and $d \in [0,1]$ is any real number, then there is a partition of the vertex set of $G$ into $V_0 , \ldots , V_L$ and a spanning subdigraph $G^\prime$ of $G$ such that the following holds:
\begin{itemize}
\item $M^\prime \leq L \leq M$;
\item $\vert V_0 \vert \leq \eps n$;
\item $\vert V_1 \vert = \ldots = \vert V_L \vert =: m$;
\item $d^+_{G^\prime}(x) > d^+_{G}(x) - (d + \eps) n$ and $d^-_{G^\prime}(x) > d^-_{G}(x) - (d + \eps) n$ for all vertices $x \in V(G)$;
\item For all $1 \leq i \leq L$ the digraph $G^\prime[V_i]$ is empty;
\item For all $1 \leq j \leq L$ with $i \neq j$ the pair $(V_i , V_j)_{G^\prime}$ is $\eps$-regular and has density either 0 or at least $d$.
\end{itemize}
\end{lemma}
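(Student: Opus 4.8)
The plan is to deduce this from the `plain' (non-degree) diregularity lemma of Alon and Shapira~\cite{AS} in the same way that the degree form of Szemer\'edi's regularity lemma is deduced from the original version. That version provides, for every $\eps_1>0$ and integer $M_1'$, integers $M_1,n_1$ such that every digraph on $n\ge n_1$ vertices admits a partition $W_0,W_1,\dots,W_{L_1}$ of its vertex set with $M_1'\le L_1\le M_1$, $|W_0|\le\eps_1 n$, $|W_1|=\dots=|W_{L_1}|=:m_1$, and such that all but at most $\eps_1 L_1^2$ of the ordered pairs $(W_i,W_j)$ with $i\ne j$ induce an $\eps_1$-regular bipartite digraph $(W_i,W_j)_G$. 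First I would fix $\eps_1>0$ sufficiently small compared with $\eps$ and apply this with $M_1':=\max\{2M',\lceil 1/\eps_1\rceil\}$; then $m_1\le n/L_1\le\eps_1 n$. The digraph $G'$ will be obtained from $G$ by keeping all edges incident to the (enlarged) exceptional set and deleting, among the remaining edges, those inside a cluster and those lying in a pair that is not sufficiently regular or has density below $d$.

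The work lies in enlarging the exceptional set. I would first move into it all \emph{irregular-heavy} clusters, meaning those lying in more than $\sqrt{\eps_1}L_1$ irregular ordered pairs (in either coordinate); as there are at most $2\eps_1 L_1^2$ such incidences, at most $2\sqrt{\eps_1}L_1$ clusters are irregular-heavy. Next, in each remaining cluster $W_i$, I would move into the exceptional set the \emph{bad} vertices $x$: those for which $|N^+_G(x)\cap W_j|>(d_G(W_i,W_j)+\eps_1)m_1$ for more than $2\sqrt{\eps_1}L_1$ of the $\eps_1$-regular pairs $(W_i,W_j)$, or for which $|N^-_G(x)\cap W_j|>(d_G(W_j,W_i)+\eps_1)m_1$ for more than $2\sqrt{\eps_1}L_1$ of the $\eps_1$-regular pairs $(W_j,W_i)$. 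By the standard fact that an $\eps_1$-regular pair with parts of size $m_1$ has fewer than $\eps_1 m_1$ vertices on one side whose degree to the other side exceeds the average by more than $\eps_1 m_1$, and averaging over the at most $2L_1$ relevant pairs, each cluster has fewer than $\sqrt{\eps_1}m_1$ bad vertices. Finally I would move a few further vertices into the exceptional set so that all surviving clusters have a common size $m$, and re-index these as $V_1,\dots,V_L$. Then the final exceptional set $V_0$ satisfies $|V_0|\le(\eps_1+4\sqrt{\eps_1})n\le\eps n$, and $M'\le L_1/2\le L\le L_1\le M_1=:M$; moreover each $(V_i,V_j)$ with $i,j\ge1$ arising from an $\eps_1$-regular pair $(W_i,W_j)$ is obtained from the latter by deleting at most a $2\sqrt{\eps_1}$-fraction of each side, so a routine `slicing' argument shows it is $2\eps_1$-regular with $d_G(V_i,V_j)=d_G(W_i,W_j)\pm\eps_1$.

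Now I would set $G':=G$ minus all edges inside some $V_i$ ($i\ge1$) and all edges of pairs $(V_i,V_j)$ with $i,j\ge1$ that are not $2\eps_1$-regular or have density below $d$, keeping all other edges (in particular all those incident to $V_0$). Every required property then follows readily except the degree condition: the clusters are independent in $G'$; each surviving pair $(V_i,V_j)$ is $2\eps_1$-regular (hence $\eps$-regular) of density at least $d$, and each deleted pair has density $0$ in $G'$ (hence is $\eps$-regular); and the bounds on $L$, $|V_0|$ and the $|V_i|$ hold by construction. For the degree condition, note first that no deleted edge is incident to $V_0$, so $d^\pm_{G'}(x)=d^\pm_G(x)$ for $x\in V_0$. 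For $x\in V_i$ with $i\ge1$, the cluster $V_i$ is not irregular-heavy and $x$ is not bad; the out-edges of $x$ deleted are those inside $V_i$ (at most $m_1\le\eps_1 n$), those into clusters $V_j$ with $(W_i,W_j)$ irregular (at most $\sqrt{\eps_1}L_1\cdot m_1\le\sqrt{\eps_1}n$, since $V_i$ is not irregular-heavy), and those into clusters $V_j$ with $(W_i,W_j)$ $\eps_1$-regular but $d_G(V_i,V_j)<d$; for the last group, at most $2\sqrt{\eps_1}L_1$ of these $V_j$ are ones for which $x$ violates the inequality defining `bad' (contributing at most $2\sqrt{\eps_1}L_1 m_1\le 2\sqrt{\eps_1}n$), and for every other such $V_j$ we have $|N^+_G(x)\cap V_j|\le|N^+_G(x)\cap W_j|\le(d_G(W_i,W_j)+\eps_1)m_1\le(d+2\eps_1)m_1$ (since $d_G(W_i,W_j)\le d_G(V_i,V_j)+\eps_1<d+\eps_1$), so that summing over at most $L_1$ clusters contributes at most $(d+2\eps_1)n$. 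Altogether $d^+_G(x)-d^+_{G'}(x)\le(d+3\eps_1+3\sqrt{\eps_1})n<(d+\eps)n$ provided $\eps_1$ was chosen small enough, and the bound for in-degrees is symmetric. I expect this last verification -- bounding the degree each vertex loses to irregular pairs and to pairs of density below $d$ while keeping $|V_0|\le\eps n$ -- to be the only real difficulty; the point that makes it work is that, thanks to the basic property of regular pairs, the number of vertices and clusters that have to be moved into $V_0$ is bounded in terms of $\eps_1$ alone, independently of the density threshold $d$.
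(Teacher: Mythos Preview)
Your proposal is correct and follows precisely the approach the paper indicates: the paper does not give a proof of this lemma, merely noting that the degree form ``can be derived from the standard version in the same manner as the undirected degree form,'' and your derivation from the Alon--Shapira diregularity lemma via the standard irregular-heavy/bad-vertex cleanup is exactly this. The details you supply (the counting of irregular-heavy clusters and bad vertices, the slicing estimate, and the per-vertex degree-loss bookkeeping) are all sound.
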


We call $V_1 , \ldots , V_L$ \emph{clusters}, $V_0$ the \emph{exceptional set} and the vertices in $V_0$ \emph{exceptional vertices}. We refer to $G^\prime$ as the \emph{pure digraph}. The last condition of the lemma says that all pairs of clusters are $\eps$-regular in both directions (but possibly with different densities). The \emph{reduced digraph} $R$ of $G$ with parameters $\eps$, $d$ and $M^\prime$ is the digraph whose vertices are $V_1 , \ldots, V_L$ and in which $V_i V_j$ is an edge precisely when $(V_i , V_j)_{G^\prime}$ is $\eps$-regular and has density at least $d$. For each edge $V_i V_j$ of $G$ we write $d_{ij}$ for the density of $(V_i , V_j)_{G^\prime}$. Suppose $0 < 1/M^\prime \ll \eps \ll \beta \ll d \ll 1$. The \emph{reduced multidigraph} $R(\beta)$ of $G$ with parameters $\eps, \beta, d, M^\prime$ is obtained from $R$ by setting $V(R(\beta)) := V(R)$ and adding $\lfloor d_{ij} / \beta \rfloor$ directed edges from $V_i$ to $V_j$ whenever $V_i V_j
  \in E(R)$. These digraphs inherit some of the key properties of $G$, as the next few results show (which are variants of well known observations, see e.g. Lemma 11 in \cite{kot} for the next result).

\begin{lemma} \label{reduced}
Let $0 < 1/n_0 \ll 1/M^\prime \ll \eps \ll \beta \ll d \leq d^\prime \ll c_1 \leq c_2 < 1$ and let $G$ be a digraph of order $n \geq n_0$ with $\delta^0(G) \geq c_1 n$ and $\Delta^0(G) \leq c_2 n$. Apply Lemma~\ref{direg} with parameters $\eps, d$ and $M^\prime$ to obtain a pure digraph $G^\prime$ and a reduced digraph $R$ of $G$ and let $R^\prime$ denote the subdigraph of $R$ whose edges correspond to pairs of density at least $d^\prime$. Let $R(\beta)$ denote the reduced multidigraph of $G$ with parameters $\eps , \beta, d$ and $M^\prime$ and let $R^\prime(\beta)$ be the multidigraph obtained from $R(\beta)$ by including only those edges which also correspond to an edge of $R^\prime$. Let $L := \vert R \vert = \vert R(\beta) \vert$. Then
\begin{enumerate}
\item[(i)] $\delta^0(R^\prime) \geq (c_1 - 3d^\prime)L$.
\item[(ii)] $\delta^0(R'(\beta)) \geq (c_1 - 4d') \dfrac{L}{\beta}\ \ \mbox{ and }\ \ \Delta^0(R'(\beta)) \leq (c_2 + 2 \eps) \dfrac{L}{\beta}$.
\end{enumerate}
\end{lemma}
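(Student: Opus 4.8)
The plan is to count edges between clusters and translate semidegree conditions on $G$ into semidegree conditions on the reduced (multi)digraphs, keeping careful track of the losses incurred by passing to the pure digraph $G'$, by discarding low-density pairs, and by rounding in the definition of $R(\beta)$. Fix a vertex $V_i$ of $R$ (equivalently, a cluster in $G$). First I would bound $d^+_{R'}(V_i)$ from below. By the definition of the pure digraph, every vertex $x \in V_i$ satisfies $d^+_{G'}(x) > d^+_G(x) - (d+\eps)n \ge (c_1 - d - \eps)n$. Summing over $x \in V_i$ and noting that $G'[V_i]$ is empty and $|V_0| \le \eps n$, the number of $G'$-edges from $V_i$ to clusters $V_j$ is at least $(c_1 - d - \eps - \eps)mL \cdot (m/\dots)$ — more precisely, $e_{G'}(V_i, V(G)\setminus(V_0 \cup V_i)) \ge m\big((c_1 - d - \eps)n - \eps n\big)$. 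Since each pair $(V_i,V_j)$ contributes at most $m^2$ edges, and since pairs of density less than $d'$ contribute at most $d' m^2$ edges each, at least $(c_1 - d - 2\eps - d')L$ clusters $V_j$ must be joined to $V_i$ in $R'$; using $\eps \ll d \le d'$ this gives $\delta^+(R') \ge (c_1 - 3d')L$. The bound on $\delta^-(R')$ is symmetric, which establishes~(i).

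For~(ii), I would convert the edge count in $R'$ into one in $R'(\beta)$ via the multiplicities $\lfloor d_{ij}/\beta \rfloor$. For the lower bound: each edge $V_iV_j$ of $R'$ has $d_{ij} \ge d'$, so it is replaced by $\lfloor d_{ij}/\beta \rfloor \ge d_{ij}/\beta - 1 \ge d_{ij}/\beta - d'/(2\beta) \cdot (2\beta/d') $ — rather, write $\lfloor d_{ij}/\beta\rfloor \ge (1 - \beta/d')d_{ij}/\beta$, which holds since $d_{ij}\ge d'$. Summing $d_{ij}/\beta$ over $j$ with $V_iV_j \in E(R')$, the total is at least $(1/\beta)\cdot e_{G'}(V_i, \cdot)/m^2$-type quantity; combining with the $G'$-edge count from the proof of~(i), namely $\sum_{j} d_{ij} m^2 = e_{G'}(V_i, V(G)\setminus(V_0\cup V_i))$ restricted to $R'$-edges $\ge (c_1 - 3d')mn - \eps m n$ roughly, one obtains $d^+_{R'(\beta)}(V_i) \ge (c_1 - 4d')L/\beta$ after absorbing the rounding factor $(1-\beta/d')$ and the extra $\eps$, $d$ terms into the slack between $3d'$ and $4d'$ (here $\eps \ll \beta \ll d \le d'$ is what makes this work). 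For the upper bound: $\Delta^0(G) \le c_2 n$ gives $e_{G'}(V_i,\cdot) \le e_G(V_i, \cdot) \le c_2 n m$, hence $\sum_j d_{ij} \le c_2 n m / m^2 = c_2 n/m \le c_2(1+2\eps) L$ using $Lm \ge n - \eps n$, i.e. $n \le Lm/(1-\eps)$; and $\lfloor d_{ij}/\beta\rfloor \le d_{ij}/\beta$, so $d^+_{R(\beta)}(V_i) \le c_2(1+2\eps)L/\beta \le (c_2 + 2\eps)L/\beta$, and the same holds a fortiori for $R'(\beta) \subseteq R(\beta)$. The in-versions are symmetric.

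The main obstacle, though it is bookkeeping rather than a genuine difficulty, is making the chain of inequalities $\eps \ll \beta \ll d \le d'$ do all the work of absorbing three distinct sources of error — the $(d+\eps)n$ loss from purification, the $d'm^2$-per-pair loss from the density threshold $d'$, and the $-1$ per edge from the floor in $\lfloor d_{ij}/\beta\rfloor$ — into the single clean gap between the coefficients $c_1 - 3d'$ and $c_1 - 4d'$ (and similarly the $c_2 \to c_2 + 2\eps$ gap for the upper bound). Each error term is individually small: the purification error contributes $O(d)L$ to the cluster count, the threshold error another $O(d')L$, and the floor error contributes a multiplicative $(1-O(\beta/d'))$ factor to the multiplicities; one checks that their sum is below $d' L$ (resp.\ $d' L/\beta$), which is exactly the budget available. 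I expect no step to require more than the elementary estimates $|V_0|\le \eps n$, $Lm \ge (1-\eps)n$, and $\lfloor t \rfloor \ge (1-\beta/d')t$ for $t \ge d'/\beta$, together with the trivial bound that any $\eps$-regular pair spans at most $m^2$ edges and any pair of density $<d'$ spans at most $d'm^2$ edges.
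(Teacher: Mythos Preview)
Your proposal is correct and follows essentially the same approach as the paper's proof: both arguments count $G'$-edges out of a cluster $V_i$, separate the contribution from pairs of density $\ge d'$ versus $<d'$, and then pass to $R'(\beta)$ by bounding the floors $\lfloor d_{ij}/\beta\rfloor$. The paper organises the bookkeeping slightly differently by introducing a weighted digraph $R'_w$ with edge-weights $d_{ij}$, noting $\delta^0(R')\ge w^0(R'_w)$ and computing $w^+(V_i)$ directly from $e_{G'}(V_i,V(G')\setminus V_0)\le m^2 w^+(V_i)+d'm^2 L$; for~(ii) it uses the simpler estimate $\sum_j\lfloor d_{ij}/\beta\rfloor\ge w^+(V_i)/\beta-L$ rather than your multiplicative $(1-\beta/d')$ bound, but this is a cosmetic difference and both routes absorb the error into the $d'$ slack in the same way.
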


\begin{proof}
To prove (i), we consider the weighted digraph $R^\prime_w$ obtained from $R^\prime$ by giving each edge $V_i V_j$ of $R^\prime$ weight $d_{ij}$. Given a cluster $V_i$, we write $w^+(V_i)$ for the sum of the weights of all edges sent out by $V_i$ in $R^\prime_w$. We define $w^-(V_i)$ similarly and write $w^0(R_w^\prime)$ for the minimum of $\min \lbrace w^+(V_i) , w^-(V_i) \rbrace$ over all clusters $V_i$. Note that $\delta^0(R^\prime) \geq w^0(R^\prime_w)$. Moreover, Lemma~\ref{direg} implies that $d^\pm_{G^\prime \setminus V_0}(x) > (c_1 - 2d)n$ for all $x \in V(G^\prime \setminus V_0)$. Thus each $V_i \in V(R^\prime)$ satisfies
\[
(c_1 - 2d)nm \leq e_{G^\prime}(V_i , V(G^\prime) \setminus V_0) \leq m^2 w^+(V_i) + (d^\prime m^2)L
\]
and so $w^+(V_i) \geq (c_1 - 2d - d^\prime)L \geq (c_1 - 3d^\prime)L$. Arguing in the same way for inweights gives us $\delta^0(R^\prime) \geq w^0(R^\prime_w) \geq (c_1 - 3d^\prime)L$. We can deduce the first part of (ii) by noting that
$$
d_{R'(\beta)}^+(V_i) = \sum\limits_{V_j \in N_{R'}^+(V_i)}{\lfloor d_{ij}/\beta \rfloor} \geq w^+(V_i)/\beta - L > (c_1 - 4d')\frac{L}{\beta}.
$$ 
Similar arguments can be used to show the remaining bounds.%
\COMMENT{from Andrew's paper}
\end{proof}

\begin{lemma} \label{inherit}
Let $M^\prime , n_0$ be positive integers and let $\eps , d, \nu, \tau$ be positive constants such that $1/ n_0 \ll 1/M' \ll \eps \ll d \leq d^\prime \leq \nu \leq \tau < 1$ and $d^\prime \leq \nu/20$. Let $G$ be a digraph on $n \geq n_0$ vertices such that $G$ is a robust $(\nu , \tau)$-outexpander. Let $R$ be the reduced digraph of $G$ with parameters $\eps, d$ and $M^\prime$ with clusters of size $m$ and let $R^\prime$ be the subdigraph of $R$ whose edges correspond to pairs of density at least $d^\prime$. Then 
$R^\prime$ is a robust $(\nu/4 , 3 \tau)$-outexpander.
\end{lemma}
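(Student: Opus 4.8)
The plan is to transfer the robust outexpansion of $G$ to $R'$ by a counting/averaging argument, reversing the construction of clusters. Fix a set $\mathcal{S} \subseteq V(R')$ of clusters with $3\tau L \le |\mathcal{S}| \le (1-3\tau)L$, and let $S := \bigcup_{V_i \in \mathcal{S}} V_i \subseteq V(G)$, so that $|S| = |\mathcal{S}|m$. Since $|V_0| \le \eps n$ and $n = Lm + |V_0|$, we have $n = (1 \pm \eps) Lm$, so $|S|$ lies comfortably in the window $[\tau n, (1-\tau)n]$ where the robust outexpansion hypothesis on $G$ applies. Hence $|RN^+_{\nu, G}(S)| \ge |S| + \nu n$. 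The goal is to show that $\mathcal{T} := \{ V_j \in V(R') : V_j \in N^+_{\nu/4, R'}(\mathcal{S}) \}$ — the clusters with at least $(\nu/4) L$ inneighbours in $\mathcal{S}$ within $R'$ — satisfies $|\mathcal{T}| \ge |\mathcal{S}| + (\nu/4) L$.

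The key step is the following double-counting dichotomy for a cluster $V_j \notin \mathcal{T}$. Such a $V_j$ has fewer than $(\nu/4)L$ inneighbours in $\mathcal{S}$ in $R'$; I want to conclude that $V_j$ contains few robust outneighbours of $S$. First I would discard contributions from edges of $G$ not captured by $R'$: the pure digraph loses at most $(d+\eps)n$ in/out-degree per vertex (Lemma~\ref{direg}), edges into $V_0$ account for at most $\eps n$ per vertex, and crucially pairs $(V_i, V_j)_{G'}$ of density between $0$ and $d'$ contribute at most $d' m$ edges per vertex of $V_j$ from each of the at most $L$ such clusters, i.e.\ at most $d' m L \le d' n / (1-\eps) \le (\nu/10) n$ edges in total per vertex (using $d' \le \nu/20$). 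For the surviving edges, those coming from the at most $(\nu/4)L$ clusters $V_i \in \mathcal{S}$ with $V_iV_j \in E(R')$ contribute at most $(\nu/4) L \cdot m \le (\nu/4) \cdot n/(1-\eps) \le (\nu/3) n$ inneighbours in $S$ per vertex of $V_j$. Adding these error terms, every vertex $x \in V_j$ has at most $(\nu/10 + \nu/3 + d + \eps + \eps)n < \nu n$ inneighbours in $S$, hence $x \notin RN^+_{\nu, G}(S)$. Therefore $RN^+_{\nu, G}(S) \subseteq V_0 \cup \bigcup_{V_j \in \mathcal{T}} V_j$, which gives $|S| + \nu n \le |RN^+_{\nu, G}(S)| \le |V_0| + |\mathcal{T}| m \le \eps n + |\mathcal{T}| m$.

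Dividing through by $m$ and using $n = (1\pm\eps)Lm$, this yields $|\mathcal{S}| + (\nu - \eps)(1-\eps)L \le |\mathcal{T}|$, and since $\eps \ll \nu$ the coefficient $(\nu-\eps)(1-\eps) \ge \nu/4$, so $|\mathcal{T}| \ge |\mathcal{S}| + (\nu/4)L$ as required. I should also note that one must rule out $|\mathcal{T}| > (1-\nu/4)L$ being needed — it is not, since the definition of robust outexpander only requires the lower bound $|RN^+_{\nu/4, R'}(\mathcal{S})| \ge |\mathcal{S}| + (\nu/4)L$. The main obstacle is bookkeeping the error terms carefully enough that their sum stays below $\nu n$: the three sources — pure-digraph degree loss, the exceptional set, and low-density (sub-$d'$) pairs — must each be bounded by a constant fraction of $\nu n$, and this is exactly where the hypotheses $d \le d' \le \nu$ and $d' \le \nu/20$ are used. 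Everything else is routine averaging, so I expect the write-up to be short once the dichotomy above is stated cleanly.
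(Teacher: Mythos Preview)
Your overall strategy is sound, but there is a genuine gap in the key dichotomy step. You claim that for a cluster $V_j \notin \mathcal{T}$ and \emph{every} vertex $x \in V_j$, the pairs $(V_i,V_j)_{G'}$ of density below $d'$ contribute at most $d'm$ inneighbours to $x$ from each such $V_i$. This is false: $\eps$-regularity of a pair of density $d_0 < d'$ controls densities of large subsets, not individual vertex degrees. All one can say is that at most $\eps m$ vertices of $V_j$ have indegree exceeding $(d_0+\eps)m$ from a fixed $V_i$; the remaining vertices may have arbitrarily large indegree. Since the hierarchy gives $1/M' \ll \eps$, i.e.\ $\eps L \gg 1$, a single vertex $x$ can be atypical simultaneously for many of the (up to $L$) clusters $V_i \in \mathcal{S}$ with $V_iV_j \notin E(R')$, and the na\"ive union bound over these clusters is vacuous. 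Consequently your conclusion that $V_j \cap RN^+_{\nu,G}(S) = \emptyset$ does not follow, and the containment $RN^+_{\nu,G}(S) \subseteq V_0 \cup \bigcup_{V_j \in \mathcal{T}} V_j$ is unjustified.

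The paper's proof confronts exactly this difficulty. Rather than excluding all of $V_j$ from the robust outneighbourhood, it argues by contradiction and shows that only few vertices of $V_j$ can be ``bad'' in the sense of having indegree at least $2d'm$ from at least $\sqrt{\eps}L$ of the non-$R'$ clusters in $\mathcal{S}$: double counting vertex--cluster pairs gives at most $\sqrt{\eps}m$ bad vertices per cluster. On the other hand, any $x \in V_j$ with $|N^-_{G'}(x) \cap S_G| \ge \nu n/2$ must by pigeonhole have indegree $\ge \nu m/10 \ge 2d'm$ from at least $\nu L/12 \ge \sqrt{\eps}L$ such clusters (here the hypothesis $d' \le \nu/20$ is used), hence is bad. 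The contradiction then comes from finding a cluster with at least $\eps^{1/3}m > \sqrt{\eps}m$ such vertices. Your argument can likely be repaired along these lines, but the pointwise degree bound as written does not hold.
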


\begin{proof}
Let $G^\prime$ denote the pure digraph, $L := \vert V(R) \vert$, and $V_1 , \ldots , V_L$ be the clusters of $G$, and $V_0$ the exceptional set. Let $m := \vert V_1 \vert = \ldots = \vert V_L \vert$. Suppose $S \subseteq V(R^\prime)$ has $3 \tau L \leq \vert S \vert \leq (1- 3\tau) L$. Let $S_G$ denote the union of all vertices in clusters in $S$. So $S_G \subseteq V(G)$ and $2 \tau n \leq \vert S_G \vert \leq (1-2\tau)n$. For every $x \in RN^+_{\nu , G}(S_G)$ we have that $\vert N^-_{G^\prime}(x) \cap S_G \vert \geq \vert N_G^-(x) \cap S_G \vert - (d+\eps)n \geq (\nu - d - \eps)n \geq \nu n/2$.
This implies that
\[
\vert RN^+_{\nu/2 , G^\prime}(S_G) \vert \geq \vert RN_{\nu,G}^+(S_G) \vert \geq \vert S_G \vert + \nu n \geq \vert S \vert m + \nu L m
\]
and every vertex $x \in RN_{\nu/2 , G^\prime}^+(S_G)$ has at least $\nu n/2$ inneighbours in $S_G$. Suppose, for a contradiction, that $\vert RN_{\nu/4 , R^\prime}^+(S) \vert < \vert S \vert + \nu L/4$. Let $RN^\prime$ denote the union of all vertices in clusters in $RN^+_{\nu/4,R^\prime}(S)$ and let $T := RN_{\nu /2, G^\prime}^+(S_G) \setminus RN^\prime$; then $\vert T \vert \geq \nu n/4$. 

Note that by definition, for all $V$ outside $RN^+_{\nu/4,R'}(S)$, there exists a collection $\mathcal{V}$
of at least $|S| - \nu L/4$ clusters $U \in S$ so that there is no edge from $U$ to $V$ in $R'$.
So by assumption such a $\mathcal{V}$ exists for any $V$ which has non-empty intersection with $T$.

We say that a vertex $x \in V$ is \emph{bad} if it has indegree at least $2 d^\prime m$ in at least $\sqrt{\eps} L$ of the clusters in $\mathcal{V}$. 
The final property of Lemma~\ref{direg} implies that
there are at most $\eps m$ vertices in $V$ that have indegree at least $2 d^\prime m$ in some fixed cluster of $\mathcal{V}$.
So by double counting the number of such vertex-cluster pairs, we see that any cluster contains at most $\sqrt{\eps} m$ bad vertices.

Say that a cluster $V$ is \emph{significant} if $\vert V \cap T \vert  \geq \eps^{1/3} m$.%
\COMMENT{$\nu n/4 \leq \eps^{1/3} m \times no. non-significant clusters + m \times no. significant clusters$.} Then there are at least $\nu L/5$ significant clusters and we write $V^\prime := V \cap T$. Consider any $x \in V^\prime$, where $V$ is significant. We say that a cluster $U$ in $S$ is \emph{rich for x} if $x$ has at least $\nu m/10$ inneighbours in $U$. Since $x$ has at least $\nu n/2$ inneighbours in $S_G$, there are at least $\nu L/3$ clusters in $S$ which are rich for $x$.
So there are at least $\nu L/12 \geq \sqrt{\eps} L$ clusters in $\mathcal{V}$ which are rich for $x$.%
 \COMMENT{$\nu n/2 \leq no. bad clusters \times \nu m/10 + no. good clusters \times m$} 
Since $d^\prime \leq \nu/20$, this means that every $x$ in $V'$ is bad.
Thus $V$ contains at least $\eps^{1/3} m$ bad vertices, a contradiction.
\end{proof}

The following simple observation is well known, the version given here is proved as Proposition~4.3(i) and (iii) in~\cite{KOKelly}.

\begin{proposition} \label{superslice} 
Suppose that $0<1/m \ll \eps \le d' \le d \ll 1$. Let $G$ be a bipartite graph with
vertex classes
$A$ and $B$ of size $m$. Suppose that $G'$ is obtained from $G$ by removing at most
$d'm$
vertices from each vertex class and at most $d'm$ edges incident to each vertex from
$G$.
\begin{itemize}
\item[(i)]
If $G$ is $(\eps,d)$-regular then $G'$ is $(2\sqrt{d'},d)$-regular.
\item[(ii)] If $G$ is $(\eps,d)$-superregular then $G'$ is $(2\sqrt{d'},d)$-superregular.
\end{itemize}
\end{proposition}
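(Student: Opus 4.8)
\emph{Proof proposal.} This is a pure robustness-of-regularity fact, so the plan is simply to unwind the definitions, the only point of care being that the error terms that accumulate (a term of size $\eps$ together with several terms linear in $d'$) are all comfortably absorbed into $\sqrt{d'}$ — which is exactly why the conclusion is stated with the weaker parameter $2\sqrt{d'}$.

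Write $A'\subseteq A$ and $B'\subseteq B$ for the vertex classes of $G'$, so that $(1-d')m\le |A'|,|B'|\le m$, and recall from the hierarchy that $\eps\ll d'$, so in particular $\eps\le\sqrt{d'}$ and $d'\le\sqrt{d'}/4$. The basic observation is that every admissible test pair for $G'$ is also admissible for $G$: if $X\subseteq A'$ and $Y\subseteq B'$ with $|X|\ge 2\sqrt{d'}|A'|$ and $|Y|\ge 2\sqrt{d'}|B'|$, then $|X|\ge 2\sqrt{d'}(1-d')m\ge\eps m=\eps|A|$ and likewise $|Y|\ge\eps|B|$, so $\eps$-regularity of $G$ gives $d_G(X,Y)=d\pm\eps$, and also $d_G(A',B')=d\pm\eps$.

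For (i) I would estimate $d_{G'}(X,Y)$ for every such pair. Since at most $d'm|X|$ deleted edges are incident to $X$, we get $e_{G'}(X,Y)\ge e_G(X,Y)-d'm|X|\ge(d-\eps)|X||Y|-d'm|X|$, hence $d_{G'}(X,Y)\ge d-\eps-d'm/|Y|\ge d-\sqrt{d'}$, where the last step uses $\eps\ll d'$ together with $|Y|\ge 2\sqrt{d'}(1-d')m$ to see that $\eps+d'm/|Y|\le\sqrt{d'}$. On the other hand $d_{G'}(X,Y)\le d_G(X,Y)\le d+\eps\le d+\sqrt{d'}$. Thus $d_{G'}(X,Y)=d\pm\sqrt{d'}$ for \emph{every} admissible pair; taking $(X,Y)=(A',B')$ gives $d_{G'}(A',B')=d\pm\sqrt{d'}=d\pm 2\sqrt{d'}$, and for any admissible $(X,Y)$ we get $|d_{G'}(X,Y)-d_{G'}(A',B')|\le 2\sqrt{d'}$, which is precisely $(2\sqrt{d'},d)$-regularity of $G'$.

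For (ii) the regularity half is the previous paragraph, so it remains to check degrees. Fix $a\in A'$. Superregularity of $G$ gives $d_G(a)=(d\pm\eps)m$; deleting at most $d'm$ edges at $a$ and then at most $d'm$ vertices of $B$ each kills at most $d'm$ of its surviving neighbours, so $d_{G'}(a)\ge(d-\eps-2d')m\ge(d-2\sqrt{d'})m\ge(d-2\sqrt{d'})|B'|$, while $d_{G'}(a)\le d_G(a)\le(d+\eps)m\le(d+2\sqrt{d'})(1-d')m\le(d+2\sqrt{d'})|B'|$, the middle inequality being the routine estimate $\eps+(d+2\sqrt{d'})d'\le 2\sqrt{d'}$ (valid since $\eps,d'\ll\sqrt{d'}$). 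The bound for $b\in B'$ is symmetric, and together with (i) this gives $(2\sqrt{d'},d)$-superregularity of $G'$. There is no real obstacle here; the only mild subtlety is that the deleted-edge correction $d'm/|Y|$ is itself of order $\sqrt{d'}$ (because $|Y|$ may be as small as roughly $2\sqrt{d'}m$), so the regularity parameter cannot be pushed much below $\sqrt{d'}$, and the factor $2$ in $2\sqrt{d'}$ is exactly the slack that accommodates both this correction and the $\eps$-term.
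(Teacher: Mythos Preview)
Your argument is correct and is exactly the standard one; the paper itself does not prove this proposition but merely cites it as Proposition~4.3(i) and (iii) in~\cite{KOKelly}. One tiny slip: with the paper's definitions, $\eps$-regularity together with $d_G(A,B)=d\pm\eps$ only gives $d_G(X,Y)=d\pm 2\eps$ rather than $d\pm\eps$, but since $2\eps\le\sqrt{d'}$ this changes nothing in your estimates.
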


The next result 
shows that we can 
partition an $\eps$-(super)regular pair into edge-disjoint $\eps^\prime$-(super)regular spanning subgraphs.
The proof is almost identical to that of Lemma 4.10(iii) and (iv) in \cite{KOKelly} (which covers the case $K=2$) so we omit it here.

\begin{lemma}\label{randomregslice}
Let $K$ be an integer and let $0<1/m\ll \eps \ll \gamma_1 , \ldots, \gamma_K \ll 1$ such that $\gamma_1 + \ldots + \gamma_K \leq d \leq 1$.
\begin{itemize}
\item[{\rm (i)}] If $G$ is an $(\eps,d)$-regular bipartite graph with vertex classes $X,Y$ of size $m$, then it contains $K$ edge-disjoint spanning subgraphs $J_1, \ldots, J_K$ such that for each $1 \leq k \leq K$ we have that $J_k$ is $(\eps^{1/12},\gamma_k)$-regular.
Moreover, if $x \in X$ satisfies $d_G(x) =(d \pm \eps)m$, then $d_{J_k}(x) =(\gamma_k \pm \eps^{1/12})m$ for each $1 \leq k \leq K$.
\item[{\rm (ii)}] If $G$ is an $(\eps,d)$-superregular bipartite graph with vertex classes of size $m$, then it contains $K$ edge-disjoint spanning subgraphs $J_1, \ldots, J_K$ such that for each $1 \leq k \leq K$ we have that $J_k$ is $(\eps^{1/12},\gamma_k)$-superregular.
\end{itemize}
Moreover, the spanning subgraphs can be found in time polynomial in $m$.

\end{lemma}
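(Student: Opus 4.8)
The plan is to obtain the $J_k$ from a single random partition of $E(G)$, to deduce the claimed (super)regularity from standard degree and codegree estimates, and then to derandomise. Put $p_k:=\gamma_k/d$ for $1\le k\le K$; as $\gamma_1+\dots+\gamma_K\le d$ we have $p_1+\dots+p_K\le 1$, so we may colour each edge of $G$ independently with colour $k$ with probability $p_k$ (leaving it uncoloured with the remaining probability). Let $J_k$ be the spanning subgraph of all edges of colour $k$; the $J_k$ are automatically edge-disjoint spanning subgraphs of $G$, so it remains to show that with positive probability each $J_k$ is $(\eps^{1/12},\gamma_k)$-regular (and, in case~(ii), superregular), and that such a colouring can be found in polynomial time.

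For the regularity, recall the standard facts (immediate from the definition of $\eps$-regularity; see e.g.\ \cite{KOKelly}) that in an $(\eps,d)$-regular pair with classes $X,Y$ of size $m$, all but at most $2\eps m$ vertices of each class have degree $(d\pm\eps)m$, and all but at most $4\eps m^2$ pairs of distinct vertices of each class have codegree $(d^2\pm\eps)m$. If $x$ has $d_G(x)=(d\pm\eps)m$ then $d_{J_k}(x)$ is binomial with mean $p_kd_G(x)=(\gamma_k\pm\tfrac12\eps^{1/2}\gamma_k)m$ (using $\eps\ll d$), so Proposition~\ref{chernoff} gives $d_{J_k}(x)=(\gamma_k\pm\eps^{1/2})m$ with probability $1-e^{-\Omega(m)}$. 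Likewise, if $x\neq x'$ have $|N_G(x)\cap N_G(x')|=(d^2\pm\eps)m$ then their codegree in $J_k$ is a sum over these common neighbours of products of pairs of distinct, hence independent, edge-indicators, so is binomial with mean $p_k^2|N_G(x)\cap N_G(x')|=(1\pm\eps^{1/2})\gamma_k^2m$, and thus lies in $(1\pm\eps^{1/2})\gamma_k^2m$ with probability $1-e^{-\Omega(m)}$. A union bound over the $\le 2m$ relevant vertices, the $\le m^2$ relevant pairs, the two classes and the $K$ colours shows that with positive probability every $J_k$ has all but $\le\eps^{1/2}m^2$ pairs of codegree $(1\pm\eps^{1/2})\gamma_k^2m$ and all but $\le 2\eps m$ vertices of degree $(\gamma_k\pm\eps^{1/2})m$ in each class. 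By the standard converse, such degree and codegree control forces $(\eps^{1/12},\gamma_k)$-regularity (the exponent $1/12$ arising as a fixed root of the error parameter $\eps^{1/2}$; see \cite{KOKelly}), which gives~(i). The ``moreover'' clause is immediate since $d_{J_k}(x)=(\gamma_k\pm\eps^{1/2})m$ for \emph{every} $x$ with $d_G(x)=(d\pm\eps)m$. For~(ii), if $G$ is $(\eps,d)$-superregular then \emph{all} of its vertices satisfy $d_G(x)=(d\pm\eps)m$, so the degree estimate applies to every vertex and yields the superregularity degree condition for each $J_k$.

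To make the construction algorithmic, observe that it rests on only $O(m^2)$ applications of Proposition~\ref{chernoff} to binomial random variables (the codegrees being binomial as just noted), each with $\ex(\phi_i)=\Omega(m)$ and $\beta_i$ bounded below by a positive constant, so that $\sum_i e^{-\beta_i^2\ex(\phi_i)/3}=o(1)\le 1/2$ once $m$ is large. Hence the colouring can be chosen deterministically by the method of conditional probabilities — formally via Theorem~\ref{derandom} — in time polynomial in $m$. The one genuinely fiddly point is the bookkeeping of the parameter hierarchy: one must track how the $\pm\eps$ slacks inherited from $G$ and the Chernoff fluctuations combine so that the final exponent is exactly $1/12$, and check that the numbers of exceptional vertices and pairs stay within the budget required by the degree/codegree characterisation of regularity. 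Both are routine given $0<1/m\ll\eps\ll\gamma_1,\dots,\gamma_K$, which is why the paper omits the details.
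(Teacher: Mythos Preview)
Your plan is precisely the paper's: the proof is omitted there with a pointer to Lemma~4.10 of \cite{KOKelly}, and in fact a proof along exactly your lines (random edge-colouring with $p_k=\gamma_k/d$, Chernoff control of degrees and codegrees, then the degree/codegree characterisation of regularity) sits commented out in the source. Two technical points, however, are glossed over in your write-up.

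First, the codegree characterisation you invoke (Lemma~4.9 of \cite{KOKelly}) needs, in addition to control of \emph{most} pairs, a uniform bound $|N_{J_k}(x)\cap N_{J_k}(x')|\le C\gamma_k^2 m$ valid for \emph{every} pair $x\neq x'$ and some absolute constant $C$. You only treat pairs whose $G$-codegree is $(d^2\pm\eps)m$; for the $O(\eps m^2)$ exceptional pairs one needs a separate (equally routine) Chernoff estimate showing $|N_{J_k}(x)\cap N_{J_k}(x')|\le 3\gamma_k^2 m/(2d^2)$ with high probability. The paper's version carries out this step explicitly, with $C=3/(2d^2)$.

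Second, your appeal to Theorem~\ref{derandom} for the algorithmic claim is not quite right as stated: that theorem applies to events defined via $\phi_i=\sum_j w_{ij}X_j$, i.e.\ \emph{linear} functions of the basic $0/1$ variables being chosen. Your codegree events are \emph{quadratic} in the edge-indicators: the summands $\mathbb{1}[xy\in J_k]\cdot\mathbb{1}[x'y\in J_k]$ are products of pairs of the variables we are fixing one by one, not themselves among those variables (that the codegree happens to be binomially distributed is true but beside the point). The method of conditional probabilities still works here, since the relevant conditional moments are efficiently computable, but you should either cite a more general derandomisation statement or sketch the pessimistic-estimator argument directly rather than invoking Theorem~\ref{derandom} verbatim.
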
%
\COMMENT{in Section~\ref{begin} we apply this lemma with $d \approx d_{ij}$ for each $1 \leq i < j \leq L$. We have $d_{ij} \geq d$ but $d_{ij}$ could be as large as 1. We need each $\gamma_k$ to be small but the sum of the $\gamma_k$s could be as large as one.}

The proof of Theorem~\ref{main} begins by decomposing our digraph into `blown-up' 1-factors and we will need the following well known and 
easy fact that allows us to extract almost spanning blown-up 1-factors in which pairs are superregular.

\begin{lemma} \label{supreg}
Let $0 < \eps \leq \gamma \leq 1 \leq m$ and 
let $D$ be a digraph with vertex clusters $V_1, \ldots, V_k$ each of size $m$ such that $(V_j,V_{j+1})_D$ is $(\eps,\gamma)$-regular for $1 \leq j \leq k$, where $V_{k+1} := V_1$.
Then there exists a subdigraph $D'$ of $D$ with vertex clusters $V_1', \ldots, V_k'$ where $V_j' \subseteq V_j$, $|V_j'| = (1-2\eps)m$ and $(V_j',V_{j+1}')_{D'}$ is $(4\eps,\gamma)$-superregular for $1 \leq j \leq k$, where $V_{k+1}' := V_1'$.
\end{lemma}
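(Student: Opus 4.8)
The plan is to remove, from each cluster $V_j$, the small set of vertices that fail the degree condition in one of the two relevant superregular pairs, namely $(V_{j-1},V_j)_D$ and $(V_j,V_{j+1})_D$. By the definition of $\eps$-regularity (or by the first bullet of the removed Proposition~\ref{obs}, which is the standard observation), in an $(\eps,\gamma)$-regular pair all but at most $2\eps m$ vertices on each side have degree $(\gamma\pm\eps)m$ into the other side. First I would, for each $j$, let $B_j^-\subseteq V_j$ be the set of vertices whose outdegree into $V_{j+1}$ is not $(\gamma\pm\eps)m$, and $B_j^+\subseteq V_j$ the set of vertices whose indegree from $V_{j-1}$ is not $(\gamma\pm\eps)m$; both have size at most $2\eps m$, so $|B_j^-\cup B_j^+|\le 4\eps m$. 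I want $|V_j'|=(1-2\eps)m$ exactly, so I would choose $V_j'\subseteq V_j\setminus(B_j^-\cup B_j^+)$ of size exactly $(1-2\eps)m$ (this is possible since $4\eps m\le 2\eps m$ fails — see the obstacle paragraph), and set $D'$ to be $D$ restricted to the union of the $V_j'$.

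The second step is to verify that each pair $(V_j',V_{j+1}')_{D'}$ is $(4\eps,\gamma)$-superregular. For regularity: we have passed from $V_j$ to $V_j'$ by deleting at most $2\eps m$ vertices from each side of the pair $(V_j,V_{j+1})_D$ (and no edges beyond those incident to deleted vertices), so Proposition~\ref{superslice}(i), applied with $d'=2\eps$, would give that $(V_j',V_{j+1}')_{D'}$ is $(2\sqrt{2\eps},\gamma)$-regular, which is certainly $(4\eps,\gamma)$-regular provided $\eps$ is small — wait, $2\sqrt{2\eps}\le 4\eps$ is false for small $\eps$. This is the real subtlety (see below); the cleaner route is to argue regularity directly from the definition, noting that any $X\subseteq V_j'$ with $|X|\ge 4\eps|V_j'|\ge 4\eps(1-2\eps)m\ge 2\eps m$ also has $|X|\ge\eps m=\eps|V_j|$, so the $\eps$-regularity of $(V_j,V_{j+1})_D$ directly controls $d(X,Y)$ for such $X,Y$, and $|d(V_j,V_{j+1})-d(V_j',V_{j+1}')|$ is small because we deleted few vertices; this yields $(4\eps,\gamma)$-regularity with room to spare. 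For the degree condition: every $x\in V_j'$ satisfies $d^+_D(x,V_{j+1})=(\gamma\pm\eps)m$ by choice of $V_j'$, and since $V_{j+1}'$ is obtained from $V_{j+1}$ by deleting at most $2\eps m$ vertices, $d^+_{D'}(x,V_{j+1}')=(\gamma\pm\eps)m-(\le 2\eps m)=\gamma|V_{j+1}'|\pm O(\eps)m$, which is $(\gamma\pm 4\eps)|V_{j+1}'|$ since $|V_{j+1}'|=(1-2\eps)m=\Theta(m)$; the analogous bound for indegrees from $V_{j-1}'$ holds symmetrically.

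The main obstacle is purely bookkeeping with the constants: I have $|B_j^-\cup B_j^+|\le 4\eps m$ bad vertices but I am only allowed to delete $2\eps m$ vertices to reach $|V_j'|=(1-2\eps)m$. The fix is to be slightly more careful about what "bad" means — one only needs to control the \emph{out}degree into $V_{j+1}'$ for vertices of $V_j'$ and the \emph{in}degree from $V_{j-1}'$, but these are the same two pairs $(V_j,V_{j+1})$ and $(V_{j-1},V_j)$, so both $B_j^-$ and $B_j^+$ genuinely arise. The honest resolution, which matches how such lemmas are standardly stated, is that the bound $2\eps$ in the hypothesis should be read with enough slack (or one takes $|V_j'|=(1-4\eps)m$ and then relabels $\eps\to\eps/2$ at the outset, since the statement is applied with $\eps$ arbitrarily small); I would simply note at the start of the proof that we may assume the superregularity input is $(\eps/4,\gamma)$-regular (weakening $\eps$ to $4\eps$ is what the conclusion allows anyway), which makes the bad sets have size $\le\eps m/2$ each, total $\le\eps m$, and then deleting exactly $2\eps m$ vertices per cluster — more than enough to cover the bad ones — both reaches the required size $(1-2\eps)m$ and kills all bad vertices. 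With that adjustment every step above goes through, and $D'$ is found in time polynomial in $m$ (indeed linear in $|E(D)|$) since identifying the bad vertices only requires counting degrees.
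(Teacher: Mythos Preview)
Your approach is essentially identical to the paper's: delete from each cluster the vertices with atypical out- or in-degree in the two adjacent pairs, and then check superregularity. The paper's proof is a single sentence and omits the regularity verification; your direct argument for $(4\eps,\gamma)$-regularity (bypassing Proposition~\ref{superslice}, which would indeed give the wrong constant here) is the right way to do it.

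The bookkeeping obstacle you flag --- that each $V_j$ can contain up to $4\eps m$ bad vertices (at most $2\eps m$ with bad outdegree into $V_{j+1}$ plus at most $2\eps m$ with bad indegree from $V_{j-1}$), whereas the statement only allows $2\eps m$ to be deleted --- is genuine, and the paper's terse proof glosses over precisely the same point. However, your main proposed fix is invalid: you cannot ``assume the input is $(\eps/4,\gamma)$-regular'', since that \emph{strengthens} the hypothesis rather than weakening it. The correct resolution is the one you mention parenthetically: either delete $4\eps m$ vertices per cluster to obtain $|V_j'|=(1-4\eps)m$, or accept a weaker output constant such as $(8\eps,\gamma)$-superregular. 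Since the lemma is only ever applied inside a hierarchy where constant factors in $\eps$ are irrelevant, this is a cosmetic issue in the constants of the statement, not a gap in the method.
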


\begin{proof}
For each $1 \leq j \leq k$, each $V_i$ contains at most $2\eps m$ vertices whose outdegree or indegree in $D$ is 
either at most $(\gamma - 2\eps)m$ or at least $(\gamma + 2\eps)m$. Deleting exactly $2\eps m$ vertices including these from each cluster gives us $D^\prime$.
\end{proof}

We will use the following crude version of the fact that every $\eps$-regular pair contains a subgraph of given maximum degree $\Delta$ whose average degree is close to $\Delta$,
which is Lemma~13 in~\cite{kot}.
\begin{lemma} \label{embed}
Suppose that $0 < 1/m \ll \eps^\prime, \eps \ll d_0 \leq d_1 \ll 1$ and that $(A,B)$ is an $(\eps,d_1)$-regular pair with $m$ vertices in each class. Then $(A,B)$ contains a subgraph $H$ whose maximum degree is at most $d_0 m$ and whose average degree is at least $d_0 m/8$.
\end{lemma}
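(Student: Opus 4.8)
The plan is to show that an $(\eps,d_1)$-regular pair $(A,B)$ contains a subgraph $H$ of maximum degree at most $d_0 m$ whose average degree is at least $d_0 m/8$. The natural approach is a random sparsification. Set $p := d_0/d_1 \in (0,1)$ (noting $d_0 \le d_1$, so $p \le 1$) and form a random subgraph $H_0$ of $(A,B)$ by retaining each edge independently with probability $p$. Since $(A,B)$ has $d_1 m^2$ edges up to lower-order terms, the expected number of edges of $H_0$ is about $p d_1 m^2 = d_0 m^2$, so the average degree of $H_0$ over the $2m$ vertices is about $d_0 m/2$. The issue is that $H_0$ may have some vertices of degree exceeding $d_0 m$ (indeed, by $\eps$-regularity a typical vertex has degree $(d_1 \pm \eps)m$ in $(A,B)$, so its expected degree in $H_0$ is $(d_0 \pm \eps d_0/d_1)m$, just under the threshold — but fluctuations can push it over). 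To fix this, let $H$ be obtained from $H_0$ by deleting, at each vertex of degree more than $d_0 m$, enough edges to bring its degree down to $d_0 m$.

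First I would make the degree count precise. By $\eps$-regularity, all but at most $2\eps m$ vertices of $A \cup B$ have degree $(d_1 \pm \eps)m$ in $(A,B)$ (this is the standard fact; one could invoke Proposition~\ref{obs} if it were uncommented, but it is elementary in any case — a vertex $v \in A$ with $d(v) < (d_1-\eps)m$ or $d(v) > (d_1+\eps)m$ would, together with enough other such vertices, violate $\eps$-regularity of the pair). For such a ``typical'' vertex $v$, the degree $d_{H_0}(v)$ is a binomial random variable with mean $(d_0 \pm \eps d_0/d_1)m \le (d_0 - \eps d_0/d_1 \cdot \tfrac12)m$ — more simply, mean at most $(1 + \eps/d_1) d_0 m$ and at least $(1 - \eps/d_1)d_0 m$. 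Then I would apply Proposition~\ref{chernoff} with $a$ a small constant (say $a = 1/2$), which shows $\pr(d_{H_0}(v) > d_0 m) \le \pr(d_{H_0}(v) > (1+a)\ex d_{H_0}(v))$ is exponentially small in $m$ (using $\ex d_{H_0}(v) \ge (1-\eps/d_1)d_0 m = \Omega(m)$ since $d_0 \gg 1/m$), and likewise $\pr(d_{H_0}(v) < d_0 m/2)$ is exponentially small.

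Next I would take a union bound over the at most $2m$ typical vertices to conclude that with high probability, every typical vertex $v$ satisfies $d_0 m/2 \le d_{H_0}(v) \le d_0 m$; fix such an outcome. For these vertices, passing from $H_0$ to $H$ deletes no edges at $v$, so $d_H(v) \ge d_0 m/2$ still, except that edges incident to $v$ might be removed because of a capping operation at the \emph{other} endpoint. But the total number of edges removed in the capping step is at most $2\eps m \cdot m$ (at most $2\eps m$ atypical vertices, each losing at most $m$ edges), which is negligible; so the sum of degrees of $H$ is at least $2m \cdot (d_0 m / 2) - 2 \cdot (2\eps m \cdot m) - 2\eps m \cdot m \ge d_0 m^2 - O(\eps m^2) \ge d_0 m^2 / 4$, using $\eps \ll d_0$. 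Hence the average degree of $H$ over its $2m$ vertices is at least $d_0 m / 8$, and $\Delta(H) \le d_0 m$ by construction. Finally, since Lemma~\ref{embed} is quoted from \cite{kot} and stated without proof in the excerpt, I would simply cite that reference; but if a self-contained argument is wanted, the above is it. The only mild obstacle is bookkeeping the two sources of edge loss (capping at atypical vertices and the small count of atypical vertices themselves) against the slack provided by $\eps \ll d_0$ — there is no real difficulty since all error terms are $O(\eps m^2)$ while the target is a constant fraction of $d_0 m^2$.
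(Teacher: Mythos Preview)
Your approach differs from the paper's: the paper notes (in one line, citing \cite{kot}) that the proof ``proceeds by greedily removing matchings'', i.e.\ one repeatedly extracts a near-perfect matching from the regular pair and takes the union of about $d_0 m$ of them, which immediately gives $\Delta(H) \le d_0 m$ and average degree close to $d_0 m$. This is directly algorithmic, which matters here since the paper uses Lemma~\ref{embed} inside a polynomial-time construction. Your random sparsification is a perfectly reasonable alternative in principle, but would need to be derandomised (via Theorem~\ref{derandom}) to match the algorithmic claim.

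There is, however, a concrete error in your Chernoff step for the upper tail. With $p = d_0/d_1$, a typical vertex $v$ has $\ex d_{H_0}(v) = (1 \pm \eps/d_1) d_0 m$, which is essentially $d_0 m$, not bounded away from it. So your claimed inequality $\pr(d_{H_0}(v) > d_0 m) \le \pr(d_{H_0}(v) > (1+a)\ex d_{H_0}(v))$ goes the wrong way: since $d_0 m < (1+a)\ex d_{H_0}(v)$ for any fixed $a>0$, the event $\{d_{H_0}(v) > d_0 m\}$ is the \emph{larger} event, and in fact it occurs with probability close to $1/2$, not exponentially small. Hence your assertion that typical vertices are never capped is false. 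The fix is easy: take $p := d_0/(2d_1)$ instead, so that $\ex d_{H_0}(v) \approx d_0 m/2$; then Chernoff with $a=1/2$ genuinely gives $d_{H_0}(v) \le d_0 m$ and $d_{H_0}(v) \ge d_0 m/4$ for all typical $v$ with high probability, and the rest of your accounting (losing only $O(\eps m^2)$ edges from atypical vertices, against a total of order $d_0 m^2$) goes through to yield average degree at least $d_0 m/8$.
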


The proof proceeds by greedily removing matchings and so $H$ can be found in polynomial time.
Part (ii) of the following observation is proved as Lemma 5.3 in~\cite{KOKelly}; (i) is immediate from the definition.
\begin{lemma}\label{expanderblowup}
Let $r\ge 3$ and let $G$ be a robust $(\nu,\tau)$-outexpander with $0<3\nu\le
\tau<1$. Let $G'$ be the $r$-fold blow-up of
$G$. Then
\begin{itemize}
\item[(i)] $\delta^0(G') = r\delta^0(G)$.
\item[(ii)] $G'$ is a robust $(\nu^3,2\tau)$-outexpander.
\end{itemize}
\end{lemma}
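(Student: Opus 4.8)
The plan is to reduce the robust outexpansion of the blow-up $G'$ directly to that of $G$, exploiting the fact that in a blow-up all vertices of a given cluster have the \emph{same} inneighbourhood. Write $n := |V(G)|$, so $|V(G')| = rn =: n'$, and for $x \in V(G)$ let $V_x$ be the corresponding cluster of $G'$. The key observation is that for every $v \in V_x$ one has $N^-_{G'}(v) = \bigcup_{y \in N^-_G(x)} V_y$; in particular, given $S' \subseteq V(G')$ and writing $s_y := |V_y \cap S'|$, every $v \in V_x$ satisfies $|N^-_{G'}(v) \cap S'| = \sum_{y \in N^-_G(x)} s_y =: f(x)$. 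Consequently $RN^+_{\nu^3,G'}(S')$ is precisely the union of the clusters $V_x$ with $f(x) \ge \nu^3 n'$, so it suffices to show that the number of such $x$ is at least $|S'|/r + \nu^3 n$. (Part (i) follows from the same identity, since each $v \in V_x$ has in- and outdegree $r\,d^-_G(x)$ and $r\,d^+_G(x)$, whence $\delta^0(G') = r\delta^0(G)$.)

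First I would dispose of the trivial case $2\tau \ge 1$, where the conclusion is vacuous, and so assume $\tau < 1/2$; I also assume $n$ is sufficiently large, as in the intended applications. Given $S'$ with $2\tau n' \le |S'| \le (1-2\tau)n'$, define the \emph{heavy set} $S := \{y \in V(G) : s_y \ge \nu^2 r\}$. Since the light vertices contribute at most $\nu^2 rn$ in total to $|S'| = \sum_y s_y$, and $\nu^2 \le \nu \le \tau/3 \le \tau$, we get $|S| \ge 2\tau n - \nu^2 n \ge \tau n$. Next choose $S_0 \subseteq S$ of size $\min\{|S|, \lfloor (1-\tau)n \rfloor\}$; for $n$ large this size lies in $[\tau n, (1-\tau)n]$, so the robust $(\nu,\tau)$-outexpansion of $G$ applies to $S_0$ and gives $|RN^+_{\nu,G}(S_0)| \ge |S_0| + \nu n$.

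The crucial point is that every $x \in RN^+_{\nu,G}(S_0)$ is counted: since $S_0 \subseteq S$, each $y \in N^-_G(x) \cap S_0$ has $s_y \ge \nu^2 r$, so $f(x) \ge \nu^2 r \cdot |N^-_G(x) \cap S_0| \ge \nu^2 r \cdot \nu n = \nu^3 n'$. Hence at least $|S_0| + \nu n$ clusters $V_x$ lie inside $RN^+_{\nu^3,G'}(S')$, i.e. $|RN^+_{\nu^3,G'}(S')| \ge r(|S_0| + \nu n)$, and it remains to check $r(|S_0|+\nu n) \ge |S'| + \nu^3 n'$. If $S_0 = S$, I would use $|S'| \le r|S| + \nu^2 rn$, which reduces the inequality to $\nu \ge \nu^2 + \nu^3$, true since $\nu \le 1/3$. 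If instead $|S_0| = \lfloor (1-\tau)n \rfloor < |S|$, I would use the given bound $|S'| \le (1-2\tau)n'$ together with $|S_0| \ge (1-\tau)n - 1$, reducing the inequality to $\tau + \nu - 1/n \ge \nu^3$, true for $n$ large since $\nu \ge \nu^3$ and $\tau > 1/n$.

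I expect the only genuinely delicate point to be the choice of threshold $\nu^2 r$ in the definition of the heavy set $S$. A larger threshold would break the lower bound $|S| \ge \tau n$ needed to apply expansion, whereas a smaller threshold (e.g. $\nu r$) would destroy the $\nu^2 rn$ slack that absorbs the extra $\nu^3 n'$ term in the case $S_0 = S$. This trade-off is exactly what forces the loss from $\nu$ to $\nu^3$ in the expansion parameter, and the doubling of the window ($\tau$ to $2\tau$) is what buys enough room when the heavy set is too large to feed into the outexpander hypothesis directly.
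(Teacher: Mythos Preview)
Your argument is correct. The paper does not actually prove this lemma: it states that (i) is immediate from the definition and defers (ii) entirely to Lemma~5.3 of~\cite{KOKelly}. Your proof follows what is the natural approach for such blow-up statements---pass from $S' \subseteq V(G')$ to the set $S$ of ``heavy'' vertices of $G$ (those whose clusters meet $S'$ in at least $\nu^2 r$ vertices), truncate $S$ if necessary so that the $(\nu,\tau)$-outexpansion of $G$ applies, and then observe that each vertex of $RN^+_{\nu,G}(S_0)$ yields an entire cluster inside $RN^+_{\nu^3,G'}(S')$. The case split according to whether $|S|$ exceeds $(1-\tau)n$ is exactly the right way to handle the upper window constraint, and your arithmetic in both cases is clean.

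The only point worth flagging is your explicit assumption that $n$ is sufficiently large, which is not part of the lemma's hypotheses. This enters only in Case~2, via the floor in $|S_0| = \lfloor (1-\tau)n\rfloor$, where you need $(\tau + \nu - \nu^3)n \ge 1$. One can remove this assumption by a slightly more careful treatment of the rounding (for instance, using that $|S'|$ is itself an integer bounded by $\lfloor(1-2\tau)rn\rfloor$), but since every application of this lemma in the paper is to reduced digraphs with at least $M'$ vertices, where $1/M'$ sits at the bottom of the hierarchy~(\ref{hierarchy}), your caveat is entirely harmless in context.
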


\subsection{Uniform refinements} \label{unirefs}

We will also need to partition each vertex cluster into equal parts in such a way that the in- and outneighbourhood of each vertex restricted to each part is roughly the size we expect it to be.
This is very similar to Lemma 4.7 in~\cite{KOKelly}. To state the result, we need the following definitions.
Let $G$ be a digraph and let $\mathcal{P}$ be a partition of $V(G)$ into an exceptional set $V_0$ and clusters of equal size. Suppose that $\mathcal{P}'$ 
is another partition of $V(G)$ into an exceptional set $V'_0$ and clusters of equal size. We say that $\mathcal{P'}$ is an \emph{$\ell$-refinement of $\mathcal{P}$} 
if $V_0=V'_0$ and if the clusters in $\mathcal{P}'$ are obtained by partitioning each cluster in $\mathcal{P}$ into $\ell$ subclusters of equal size. 
(So if $\mathcal{P}$ contains $k$ clusters then $\mathcal{P}'$ contains $k\ell$ clusters.) $\mathcal{P}'$ is an \emph{$\eps$-uniform $\ell$-refinement of} $\mathcal{P}$ \emph{with respect to} $G$ if it is an $\ell$-refinement of $\mathcal{P}$ which satisfies the following condition:
\begin{itemize}
\item[(URef)] Whenever $x$ is a vertex of $G$, $V$ is a cluster in $\mathcal{P}$ and $|N^+_G(x)\cap V|\ge \eps |V|$ then $|N^+_G(x)\cap V'|=(1\pm \eps)|N^+_G(x)\cap V|/\ell$ for each cluster $V'\in \mathcal{P}'$ with $V'\subseteq V$. The inneighbourhoods of the vertices of $G$ satisfy an analogous condition.
\end{itemize}
Let $\mathcal{G}$ be a collection of digraphs on the same vertex set. If $\mathcal{P}$ is a refinement with respect to $G$ for all $G \in \mathcal{G}$ then we say that it is a refinement \emph{with respect to} $\mathcal{G}$.

\begin{lemma} \label{equipartition}
Suppose that $0<1/m \ll 1/k,\eps \ll \eps', d,1/\ell, 1/t \le 1$ and $m/\ell\in\mathbb{N}$. Suppose that $\mathcal{G}$ is a collection of $t$ digraphs on the same set $V^*$ of $n\le 2km$ vertices and that $\mathcal{P}$ is a partition of $V^*$ into an exceptional set $V_0$ and $k$ clusters of size $m$. Then there exists an $\eps$-uniform $\ell$-refinement of $\mathcal{P}$ with respect to $\mathcal{G}$. Moreover, any $\eps$-uniform $\ell$-refinement $\mathcal{P}'$ of $\mathcal{P}$ automatically satisfies the following conditions for all $G \in \mathcal{G}$:
\begin{itemize}

\item[(i)] Suppose that $V$, $W$ are clusters in $\mathcal{P}$ and $V',W'$ are clusters in $\mathcal{P}'$ with $V'\subseteq V$ and $W'\subseteq W$. If $G[V,W]$ is $(\eps,d)$-superregular then $G[V',W']$ is $(\eps',d)$-superregular.
\item[(ii)] Suppose that $V$, $W$ are clusters in $\mathcal{P}$ and $V',W'$ are clusters in $\mathcal{P}'$ with $V'\subseteq V$ and $W'\subseteq W$. If $G[V,W]$ is $(\eps, d)$-regular  then $G[V',W']$ is $(\eps', d)$-regular.
\end{itemize}
\end{lemma}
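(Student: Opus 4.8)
The plan is to obtain the refinement by a random construction and then to observe that conditions (i) and (ii) are forced --- in fact (ii) will hold for \emph{any} $\ell$-refinement, and (i) will follow from (ii) together with (URef). So the only real work is the probabilistic existence step.

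\emph{Existence.} I would independently partition each cluster $V$ of $\mathcal{P}$ uniformly at random into $\ell$ subclusters of size $m/\ell$, leaving $V_0$ untouched; this defines an $\ell$-refinement $\mathcal{P}'$ of $\mathcal{P}$. The key estimate is that for fixed $G\in\mathcal{G}$, a vertex $x$ of $G$, a cluster $V\in\mathcal{P}$ with $|N^+_G(x)\cap V|\ge\eps m$ and one of the $\ell$ subclusters $V'\subseteq V$, the set $V'$ is marginally a uniformly random $(m/\ell)$-subset of $V$, so $|N^+_G(x)\cap V'|$ is hypergeometric with mean $\mu:=|N^+_G(x)\cap V|/\ell\ge\eps m/\ell$. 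The hypergeometric analogue of Proposition~\ref{chernoff} (a hypergeometric variable is at least as concentrated as the binomial with the same mean; see e.g.~\cite{JLR}) then gives $\pr\bigl(\bigl||N^+_G(x)\cap V'|-\mu\bigr|\ge\eps\mu\bigr)\le 2e^{-\eps^2\mu/3}\le 2e^{-\eps^3 m/(3\ell)}$, and likewise for inneighbourhoods. Since there are at most $4tk^2m\ell$ choices of such a quadruple together with the in-/out-option, and $1/m\ll 1/k,\eps,1/\ell,1/t$, we may assume $4tk^2m\ell\cdot 2e^{-\eps^3 m/(3\ell)}<1$, so a union bound leaves positive probability that $\mathcal{P}'$ satisfies (URef) with respect to every $G\in\mathcal{G}$; that is, an $\eps$-uniform $\ell$-refinement with respect to $\mathcal{G}$ exists.

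\emph{Conditions (i) and (ii).} These I would verify directly for an arbitrary $\eps$-uniform $\ell$-refinement $\mathcal{P}'$ and fixed $G\in\mathcal{G}$. Let $V',W'$ be clusters of $\mathcal{P}'$ with $V'\subseteq V$, $W'\subseteq W$, so $|V'|=|W'|=m/\ell\ge\eps m$. For (ii), if $G[V,W]$ is $(\eps,d)$-regular, then the standard observation that a sub-pair of a regular pair is regular (all relevant subsets have relative size $\ge\eps$) shows that $G[V',W']$ is $\eps'$-regular of density $d\pm\eps'$, since the regularity and density deviations incurred are all $O(\ell\eps)\le\eps'$. For (i), if $G[V,W]$ is $(\eps,d)$-superregular then $G[V',W']$ is $(\eps',d)$-regular by (ii), so it remains to control degrees: for $x\in V'$ we have $|N^+_G(x)\cap W|=(d\pm\eps)m\ge\eps|W|$ (as $d\gg\eps$), so (URef) applies and $d_{G[V',W']}(x)=|N^+_G(x)\cap W'|=(1\pm\eps)(d\pm\eps)m/\ell=(d\pm\eps')|W'|$; the bound for $y\in W'$ follows identically using the inneighbourhood part of (URef). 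Hence $G[V',W']$ is $(\eps',d)$-superregular.

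\emph{Main obstacle.} There is no conceptual obstacle; the only non-formal step is the union bound in the existence argument, where one must check that the hypergeometric tail $2e^{-\eps^3 m/(3\ell)}$ genuinely dominates the polynomially many ($\le 4tk^2m\ell$) bad events --- this is exactly where the hypothesis $1/m\ll 1/k,\eps,1/\ell,1/t$ is needed (and is why $\eps$ must be small relative to $m$). Conditions (i) and (ii) are then pure bookkeeping with the definitions. If an algorithmic version were wanted, the random partition could be derandomised by the method of conditional probabilities, since the relevant counts are sums over a without-replacement sample, hence negatively associated, and so admit pessimistic estimators of the kind underlying Theorem~\ref{derandom}.
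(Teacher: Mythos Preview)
Your proof is correct and follows essentially the same approach as the paper: a random partition with Chernoff-type concentration for the existence step, and the standard slicing observation together with (URef) for (i) and (ii). The only minor technical difference is that the paper's (omitted) argument assigns vertices to subclusters independently and then equalises subcluster sizes afterwards, so that Proposition~\ref{chernoff} and Theorem~\ref{derandom} apply verbatim, whereas you partition directly into equal parts and appeal to hypergeometric concentration; this is cosmetic, though note that Theorem~\ref{derandom} as stated is for independent $0/1$ variables, so your derandomisation remark would need the independent-assignment variant rather than the exact-partition one.
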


The proof proceeds by considering a random partition of $V^*$ (which can be derandomised by Theorem~\ref{derandom}). We omit the proof as it is almost the same as Lemma 4.7 in \cite{KOKelly}.

Let $\eps>0$ and let $\mathcal{P}$ be a partition of $V(G)$ into an exceptional set $V_0$ and clusters of size $m$. Let $\mathcal{P}'$ be another partition of $V(G)$ into an exceptional set $V_0'$ and clusters of size $m'$ where $m\geq m'$ and $|m-m'| \leq 2\eps m'$. We say that $\mathcal{P}$ and $\mathcal{P}'$ are $\eps$\emph{-close} if $|V_0 \cap V_0'| \geq (1-\eps)|V_0'|$ and if for each cluster $U$ in $\mathcal{P}'$ there is a cluster $V$ in $\mathcal{P}$ such that $|U \cap V| \geq (1 -\eps)m'$. In this case we say that $U$ and $V$ are \emph{associated}. Note that $V$ is unique when $\eps < 1/2$. 
Suppose that $R$ is a multidigraph whose vertices are the clusters of $\mathcal{P}$. Let $R'$ be the multidigraph obtained from $R$ by relabelling $V$ by $V'$ for each $V \in \mathcal{P}$ associated with $V' \in \mathcal{P}'$. So $R'$ has vertex set consisting precisely of the clusters of $\mathcal{P}'$.
Moreover, for each edge $E$ from $U$ to $V$ in $R$, there is a unique edge $E'$ from $U'$ to $V'$ in $R'$ which is \emph{associated} with $E$.
The following lemma states that refinements of $\eps$-close partitions are still $\eps'$-close with a slightly bigger parameter $\eps'$.   

\begin{lemma} \label{equipartition2}
Suppose that $0<1/m \ll 1/k,\eps_1,\eps_2 \ll \eps', d,1/\ell \le 1$ and that $m/\ell\in\mathbb{N}$. Suppose that $G$ is a digraph on $n\le 2km$ vertices and that $\mathcal{P}$ is a partition of $V(G)$ into an exceptional set $V_0$ and $k$ clusters of size $m$. Let $\mathcal{P}'$ be an $\eps_1$-uniform $\ell$-refinement of $\mathcal{P}$. Suppose that $\mathcal{R}$ is another partition of $V(G)$ into an exceptional set $V_0'$ and clusters of size $m'$ that is $\eps_2$-close to $\mathcal{P}$. 
Then,
in time polynomial in $m$, one can find
an $\eps'$-uniform $\ell$-refinement $\mathcal{R}'$ of $\mathcal{R}$ which is $\eps'$-close to $\mathcal{P}'$.  
\end{lemma}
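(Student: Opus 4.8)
The plan is to build $\mathcal{R}'$ explicitly and deterministically (the latter is what yields the polynomial running time), using the subclusters of $\mathcal{P}'$ as a template that dictates how to subdivide each cluster of $\mathcal{R}$. For each cluster $U$ of $\mathcal{R}$ let $V=V(U)$ be the cluster of $\mathcal{P}$ associated with $U$ (unique as $\eps_2<1/2$), so $|U\cap V|\ge(1-\eps_2)m'$, and let $V=V^{(1)}\cup\dots\cup V^{(\ell)}$ be the partition of $V$ into the $\ell$ subclusters of size $m/\ell$ coming from $\mathcal{P}'$. Put $A^{(i)}:=U\cap V^{(i)}$. Since $\sum_i|A^{(i)}|=|U\cap V|\ge(1-\eps_2)m'$, each $|A^{(i)}|\le m/\ell$, and $(1-2\eps_2)m\le m'\le m$, a short calculation shows each $|A^{(i)}|$ lies within $(1\pm 4\ell\eps_2)m'/\ell$. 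Now subdivide $U$: whenever $|A^{(i)}|\ge m'/\ell$ choose $U^{(i)}\subseteq A^{(i)}$ with $|U^{(i)}|=m'/\ell$ and move the remaining vertices of $A^{(i)}$, together with all of $U\setminus V$, into a pool $P$ (so $|P|\le 3\eps_2 m'$); as the total deficit $\sum_{i:\,|A^{(i)}|<m'/\ell}(m'/\ell-|A^{(i)}|)$ equals $|P|$, we can then, for each $i$ with $|A^{(i)}|<m'/\ell$, take $U^{(i)}$ to be $A^{(i)}$ together with just enough vertices of $P$ to reach size $m'/\ell$. Let $\mathcal{R}'$ have exceptional set $V_0'$ and clusters $\{U^{(i)}\}$; one checks the $U^{(i)}$ partition $U$, so this is an $\ell$-refinement of $\mathcal{R}$, obtainable in time polynomial in $m$ (and, as throughout the paper, we ignore divisibility issues such as $m'/\ell\notin\mathbb{N}$).

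The point of this choice is that each $U^{(i)}$ agrees with the template $V^{(i)}$ up to $O(\eps_2 m')$ vertices. Indeed, writing $W^{(i)}:=U^{(i)}\cap V^{(i)}\subseteq V^{(i)}$: if $|A^{(i)}|\ge m'/\ell$ then $W^{(i)}=U^{(i)}$, so $|V^{(i)}\setminus W^{(i)}|=(m-m')/\ell\le 2\eps_2 m'/\ell$; if $|A^{(i)}|<m'/\ell$ then $W^{(i)}=A^{(i)}=U\cap V^{(i)}$, so $V^{(i)}\setminus W^{(i)}\subseteq V\setminus U$, of size at most $m-(1-\eps_2)m'\le 3\eps_2 m'$. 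Also $|U^{(i)}\setminus V^{(i)}|$ is at most the number of pool vertices put into $U^{(i)}$, hence at most $|P|\le 3\eps_2 m'$. Consequently $|U^{(i)}\cap V^{(i)}|=|W^{(i)}|\ge m/\ell-3\eps_2 m'\ge(1-\eps')(m'/\ell)$, and together with $|V_0\cap V_0'|\ge(1-\eps_2)|V_0'|\ge(1-\eps')|V_0'|$ and $|m/\ell-m'/\ell|\le 2\eps_2 m'/\ell\le 2\eps'(m'/\ell)$, this shows $\mathcal{R}'$ is $\eps'$-close to $\mathcal{P}'$, with $U^{(i)}$ associated to $V^{(i)}$.

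It remains to verify (URef) for $\mathcal{R}'$ with respect to $G$. Take $x\in V(G)$ and a cluster $U$ of $\mathcal{R}$ with $|N^+_G(x)\cap U|\ge\eps'|U|=\eps'm'$. Then $|N^+_G(x)\cap V|\ge|N^+_G(x)\cap U|-|U\setminus V|\ge\eps'm'/2\ge\eps'm/4\ge\eps_1 m$ (using $m'\ge m/2$ and $\eps_1\ll\eps'$), so (URef) for $\mathcal{P}'$ applies and yields $|N^+_G(x)\cap V^{(i)}|=(1\pm\eps_1)|N^+_G(x)\cap V|/\ell$ for all $i$. Since $|N^+_G(x)\cap U^{(i)}|=|N^+_G(x)\cap V^{(i)}|\pm(|V^{(i)}\setminus W^{(i)}|+|U^{(i)}\setminus V^{(i)}|)$ and $|N^+_G(x)\cap V|=|N^+_G(x)\cap U|\pm(|V\setminus U|+|U\setminus V|)$, with all correction terms $O(\eps_2 m')\le O(\eps_2/\eps')\,|N^+_G(x)\cap U|$, combining these estimates and using $\eps_1,\eps_2\ll\eps',1/\ell$ to absorb the error gives $|N^+_G(x)\cap U^{(i)}|=(1\pm\eps')|N^+_G(x)\cap U|/\ell$. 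The analogous statement for inneighbourhoods follows in exactly the same way, so $\mathcal{R}'$ is an $\eps'$-uniform $\ell$-refinement of $\mathcal{R}$, completing the proof.

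The main obstacle is purely the size bookkeeping in the first paragraph: one must check that the template sets $A^{(i)}=U\cap V^{(i)}$, hence the $W^{(i)}$, all have size $(1\pm O(\ell\eps_2))m'/\ell$ and that the pool $P$ used to equalise cluster sizes has size only $O(\eps_2 m')$, so that $U^{(i)}$ genuinely differs from the template $V^{(i)}$ by at most $O(\eps_2 m')$ vertices. Once that is in place, both required properties fall out by routine absorption of errors permitted by the constant hierarchy.
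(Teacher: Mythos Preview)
Your proof is correct and follows essentially the same approach as the paper: both construct $\mathcal{R}'$ deterministically by using the subclusters of $\mathcal{P}'$ as a template, intersecting each with the associated cluster of $\mathcal{R}$, and then redistributing a small pool of leftover vertices (of size $O(\eps_2 m')$) to equalise subcluster sizes. The paper is terser (it simply picks from each intersection a subset of a fixed size slightly below $m'/\ell$ and then redistributes, and omits the verification of (URef) with ``it is easy to see''), whereas you handle the big/small cases separately and spell out the (URef) check; but the underlying construction and the error absorption via $\eps_1,\eps_2\ll \eps',1/\ell$ are the same.
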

\begin{proof}
Let $U$ be a cluster of $\mathcal{P}$ and let $V$ be the cluster of $\mathcal{R}$ associated with $U$. Then, for each $U'$ in $\mathcal{P}'$ such that $U' \subseteq U$ we have that $|U' \cap V| \geq m'/\ell - \eps_2 m'$, so we can pick a subset $V'$ of $U' \cap V$ of size exactly $(1-\eps_2 \ell)m'/\ell$. There are now exactly $\eps_2 \ell m'$ vertices of $V$ which do not lie in any subcluster $V'$. Distribute these among the $V'$ so that every subcluster has equal size $m'/\ell$. Together with $V_0'$, these subclusters form the partition $\mathcal{R}'$. Clearly $U'$ and $V'$ are associated clusters of $\mathcal{P}'$ and $\mathcal{R}'$ respectively and $|U' \cap V'| \geq (1-\eps')m'/\ell$.
It is easy to see that $\mathcal{R}'$ has the required properties.
\end{proof}

Observe that if $\eps_1 \leq \eps_2$ then any $\eps_1$-uniform refinement is also an $\eps_2$-uniform refinement, and two $\eps_1$-close partitions are also $\eps_2$-close. 

Let $\mathcal{P}_2$ denote the partition obtained by taking an $\eps$-uniform $\ell_1$-refinement $\mathcal{P}_1$ of a partition $\mathcal{P}$ and then taking an 
$\eps$-uniform $\ell_2$-refinement of $\mathcal{P}_1$. Then $\mathcal{P}_2$ is a $3\eps$-uniform $\ell_2 \ell_1$-refinement of 
$\mathcal{P}$. Indeed, whenever $x$ is a vertex of $G$, $V$ is a cluster in $\mathcal{P}$ and $\vert N^+_G(x) \cap V \vert \geq \eps \vert V \vert$, then
for each cluster $V^\prime \in \mathcal{P}_2$ with $V^\prime \subseteq V$, we have
\begin{equation} \label{doublerefinement}
\vert N_G^+(x) \cap V^\prime \vert = (1 \pm \eps)^2\vert N_G^+(x) \cap V \vert / \ell_2 \ell_1 = (1 \pm 3\eps)\vert N_G^+(x) \cap V \vert / \ell_2 \ell_1,
\end{equation}
and similarly for the inneighbourhoods.


\section{Tools for finding subgraphs, $1$-factors and Hamilton cycles} \label{sec:tools}

\subsection{Almost regular spanning subgraphs}

The following result (which is proved as Lemma~5.2 in~\cite{KOKelly}) shows that in a robust outexpander, we can guarantee a spanning subdigraph with a given degree sequence
(as long as the required degrees are not too large and do not deviate too much from each other). 
If $x$ is a vertex of a multidigraph $Q$, we write $d^+_Q(x)$ for the number of edges in $Q$ whose initial vertex is $x$
and $d^-_Q(x)$ for the number of edges in $Q$ whose final vertex is $x$.

\begin{lemma}\label{regrobust} Let $q\in\mathbb{N}$.
Suppose that $0<1/n\ll\eps\ll \nu \le \tau \ll \alpha<1$ and that $1/n \ll \rho \le q\nu^2/3$. Let~$G$ be a digraph on~$n$ vertices with
$\delta^0(G)\ge \alpha n$ which is a robust $(\nu,\tau)$-outexpander. Suppose that $Q$ is a multidigraph on $V(G)$ such that
whenever $xy\in E(G)$ then $Q$ contains at least $q$ edges from $x$ to $y$. For every vertex $x$ of $G$, let $n^+_x,n^-_x\in\mathbb{N}$
be such that $(1-\eps)\rho n\le n^+_x, n^-_x\le (1+\eps)\rho n$ and such that $\sum_{x\in V(G)} n^+_x=\sum_{x\in V(G)} n^-_x$.
Then $Q$ contains a spanning submultidigraph $Q'$ such that $d^+_{Q'}(x)=n^+_x$ and $d^-_{Q'}(x)=n^-_x$ for every $x\in V(G)=V(Q)$.
\end{lemma}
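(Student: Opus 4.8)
The plan is to phrase the existence of the required spanning submultidigraph $Q'$ as an integral flow problem and apply the Max-Flow-Min-Cut Theorem, using the robust outexpansion of $G$ to verify that every cut is large enough. Concretely, I would build an auxiliary network $N$ as follows: take a source $s$, a sink $t$, a vertex $a_x$ for each $x\in V(G)$ on the `out' side, and a vertex $b_y$ for each $y\in V(G)$ on the `in' side. Put an edge $s a_x$ of capacity $n^+_x$, an edge $b_y t$ of capacity $n^-_y$, and for each edge $xy\in E(G)$ an edge $a_x b_y$ of capacity $q$ (this is where we use that $Q$ has at least $q$ parallel edges from $x$ to $y$ whenever $xy\in E(G)$, so that an integral flow of value $f(a_xb_y)\le q$ can be realised by choosing $f(a_xb_y)$ of those parallel edges of $Q$). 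Since all capacities are integers, an integral maximum flow exists and can be found in polynomial time. A flow saturating all edges at $s$ and $t$ — equivalently, of value $\sum_x n^+_x=\sum_y n^-_y=:D$ — yields exactly the desired $Q'$: the flow on $a_xb_y$ tells us how many of the $q$ copies of $xy$ in $Q$ to keep, and then $d^+_{Q'}(x)=n^+_x$, $d^-_{Q'}(y)=n^-_y$ by conservation and saturation.

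So it remains to show that the minimum cut has capacity at least $D$, i.e.\ that there is no cut of capacity $<D$. A cut is determined by a set $A\subseteq V(G)$ (the $x$ with $a_x$ on the source side) and a set $B\subseteq V(G)$ (the $y$ with $b_y$ on the source side); its capacity is
\[
\sum_{x\notin A} n^+_x \;+\; \sum_{y\in B} n^-_y \;+\; q\cdot e_G(A, V(G)\setminus B),
\]
where $e_G(A,V(G)\setminus B)$ counts edges of $G$ from $A$ to $V(G)\setminus B$. Subtracting this from $D=\sum_x n^+_x$, it suffices to prove
\[
\sum_{x\in A} n^+_x \;\le\; \sum_{y\in B} n^-_y \;+\; q\cdot e_G\bigl(A, V(G)\setminus B\bigr)
\]
for every $A,B\subseteq V(G)$. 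Using $n^+_x\le(1+\eps)\rho n$ and $n^-_y\ge(1-\eps)\rho n$, it is enough to show $(1+\eps)\rho n|A|\le (1-\eps)\rho n|B| + q\cdot e_G(A,V(G)\setminus B)$, i.e.\ $q\cdot e_G(A,V(G)\setminus B)\ge \rho n(|A|-|B|)-2\eps\rho n|B|$. The main obstacle is the range of small and large $A$: one must split into cases. If $|A|\le\tau n$ or $|B|\ge(1-\tau)n$, then $|A|-|B|$ is already at most (roughly) $2\tau n$, and since $\delta^0(G)\ge\alpha n$ forces $e_G(A,V(G)\setminus B)\ge \alpha n - |B|$ whenever $A\ne\emptyset$ (or one argues directly when $A$ is small), the bound $q\cdot(\cdots)$ beats $\rho n\cdot 2\tau n$ using $\rho\le q\nu^2/3$ and $\nu\le\tau\ll\alpha$. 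The interesting case is $\tau n\le|A|\le(1-\tau)n$: here apply robust outexpansion. If $|B|\ge|A|$ we are done trivially (the right side already dominates up to the $\eps$-term, which is absorbed since $2\eps\rho n|B| < q e_G(\cdots)$ fails only if $e_G$ is tiny — but $e_G(A,V(G)\setminus B)$ is large here by the next line), so assume $|B|<|A|$. Every vertex of $RN^+_{\nu,G}(A)$ that does not lie in $B$ contributes at least $\nu n$ to $e_G(A,V(G)\setminus B)$ — wait, more carefully: each $y\in RN^+_{\nu,G}(A)\setminus B$ has at least $\nu n$ inneighbours in $A$, so $e_G(A,V(G)\setminus B)\ge \nu n\cdot|RN^+_{\nu,G}(A)\setminus B|\ge \nu n\bigl(|RN^+_{\nu,G}(A)|-|B|\bigr)\ge \nu n\bigl(|A|+\nu n-|B|\bigr)$. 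Hence
\[
q\cdot e_G(A,V(G)\setminus B)\;\ge\; q\nu n\bigl(|A|-|B|\bigr)+q\nu^2 n^2\;\ge\; \rho n\bigl(|A|-|B|\bigr)+q\nu^2 n^2,
\]
using $q\nu\ge q\nu^2\ge 3\rho$ (so certainly $q\nu\ge\rho$). Since $|B|\le n$, we have $2\eps\rho n|B|\le 2\eps\rho n^2\le q\nu^2 n^2$ provided $\eps$ is small enough (as $\rho\le q\nu^2/3$), and this term is absorbed. This verifies the cut inequality in all cases, so min-cut $\ge D$, max-flow $=D$, and the integral maximum flow gives the required $Q'$; the whole construction is polynomial-time.
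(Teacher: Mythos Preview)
Your approach via Max-Flow-Min-Cut is exactly the one the paper has in mind (the paper cites Lemma~5.2 of~\cite{KOKelly}, remarking that ``the proof is algorithmic (as it is based on the Max-Flow-Min-Cut Theorem)''), and the reduction to the cut inequality
\[
\sum_{x\in A} n^+_x \;\le\; \sum_{y\in B} n^-_y \;+\; q\cdot e_G\bigl(A, V(G)\setminus B\bigr)
\]
is correct. The treatment of the main range $\tau n\le |A|\le (1-\tau)n$ via robust outexpansion is also right.

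However, your case analysis has two genuine gaps. First, you never treat the case $|A|>(1-\tau)n$ (with $|B|<(1-\tau)n$): this is not covered by ``$|A|\le\tau n$'', ``$|B|\ge(1-\tau)n$'', or the main range. Here robust outexpansion does not apply, and you need to use $\delta^-(G)\ge\alpha n$ instead: every $y\notin B$ has at least $\alpha n-(n-|A|)\ge(\alpha-\tau)n$ inneighbours in $A$, giving $e_G(A,V(G)\setminus B)\ge(\alpha-\tau)n(n-|B|)$. It is then cleanest to work with the equivalent dual form $\sum_{y\notin B}n^-_y\le\sum_{x\notin A}n^+_x+q\cdot e_G$, where the bound $\sum_{y\notin B}n^-_y\le(1+\eps)\rho n(n-|B|)$ matches the factor $(n-|B|)$ in $e_G$ and the inequality reduces to $(1+\eps)\rho\le q(\alpha-\tau)$, which holds since $\rho\le q\nu^2/3\ll q\alpha$.

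Second, your argument for the case $|B|\ge(1-\tau)n$ is not valid as written: the bound ``$e_G(A,V(G)\setminus B)\ge\alpha n-|B|$'' is vacuous once $|B|>\alpha n$, which is exactly the regime you are in. This case is handled the same way as above (use the dual form, or note that when $|B|$ is close to $n$ the term $\sum_{y\in B}n^-_y$ alone is close to $D$ and only a small residual needs to be absorbed by $q\cdot e_G$). With these two boundary cases properly handled, the proof goes through.
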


The next result (Lemma~16 in~\cite{kot}) is an analogue of the previous one where we consider superregular pairs instead of robust outexpanders.
In both cases, the proof is algorithmic (as it is based on the Max-Flow-Min-Cut Theorem).

\begin{lemma} \label{almostreg2}
Let $0 < 1/n \ll \eps \ll \beta \ll \alpha^\prime \ll \alpha \ll 1$. Suppose that $G = (A,B)$ is an $(\eps , \beta + \eps)$-superregular pair where $\vert A \vert = \vert B \vert = n$. Define $\kappa := (1 - \alpha) \beta n$. Suppose we have a non-negative integer $m^+_a \leq \alpha^\prime \beta n$ associated with each $a \in A$ and a non-negative integer $m^-_b \leq \alpha^\prime \beta n$ associated with each $b \in B$ such that $\sum_{a \in A}{m^+_a} = \sum_{b \in B}{m^-_b}$. Then $G$ contains a spanning subgraph $H$ in which $d_H(a) = n^+_a := \kappa - m^+_a$ for any $a \in A$ and $d_H(b) = n^-_b := \kappa - m^-_b$ for any $b \in B$.
\end{lemma}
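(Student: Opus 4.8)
The plan is to reduce the existence of $H$ to a maximum-flow computation and then verify the corresponding minimum-cut condition. Consider the network $\mathcal{N}$ with a source $s$, a sink $t$, and a vertex for each $a\in A$ and each $b\in B$; put in an edge $sa$ of capacity $n^+_a$ for every $a\in A$, an edge $bt$ of capacity $n^-_b$ for every $b\in B$, and an edge $ab$ of capacity $1$ whenever $ab\in E(G)$. An integral $s$--$t$ flow of value $N:=\sum_{a\in A}n^+_a=\sum_{b\in B}n^-_b$ must saturate every edge out of $s$ and every edge into $t$, so the set $H$ of those edges $ab$ carrying one unit of flow satisfies $d_H(a)=n^+_a$ for all $a\in A$ and $d_H(b)=n^-_b$ for all $b\in B$, as required. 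All capacities are integers bounded by $n$, so a maximum flow is integral and can be found in polynomial time (e.g.\ by the Ford--Fulkerson algorithm); hence, by the Max-Flow--Min-Cut Theorem, it suffices to show that every $s$--$t$ cut has capacity at least $N$. Writing $S:=A\cap X$ and $T:=B\cap X$ for the cut defined by a set $X$ with $s\in X$ and $t\notin X$, and simplifying, this is exactly the assertion that
\[
e_G(S,B\setminus T)+\sum_{b\in T}n^-_b\ \ge\ \sum_{a\in S}n^+_a
\]
for all $S\subseteq A$ and $T\subseteq B$; call this inequality $(\star)$.

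To prove $(\star)$ I would work with the two sufficient conditions
\[
\text{(A)}\colon\ e_G(S,B\setminus T)+|T|\kappa'\ge|S|\kappa
\qquad\text{and}\qquad
\text{(D)}\colon\ e_G(S,B\setminus T)+|A\setminus S|\kappa'\ge|B\setminus T|\kappa,
\]
where $\kappa:=(1-\alpha)\beta n$ and $\kappa':=\kappa-\alpha'\beta n$. Since $n^+_a\le\kappa$ and $n^-_b\ge\kappa'$, condition (A) implies $(\star)$; and since $\sum_{a\in A}n^+_a=\sum_{b\in B}n^-_b$ makes $(\star)$ equivalent to $e_G(S,B\setminus T)+\sum_{a\in A\setminus S}n^+_a\ge\sum_{b\in B\setminus T}n^-_b$, where now $n^+_a\ge\kappa'$ and $n^-_b\le\kappa$, condition (D) also implies $(\star)$. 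So in each instance it is enough to establish (A) or (D). The inputs are that every vertex of $G$ has $G$-degree at least $\beta n$ (superregularity), that $(S,B\setminus T)$ has density at least $\beta-\eps$ whenever $|S|,|B\setminus T|\ge\eps n$ ($\eps$-regularity), and --- crucially --- that the ``slack'' $\alpha$ between $\kappa$ and the minimum degree $\beta n$ is much larger than $\alpha'$ and than $\eps/\beta$.

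The verification of $(\star)$ then runs through a short case analysis. If $|T|\le\alpha\beta n$ then $e_G(S,B\setminus T)\ge\sum_{a\in S}(d_G(a)-|T|)\ge|S|(\beta n-\alpha\beta n)=|S|\kappa$, which is (A); symmetrically, if $|A\setminus S|\le\alpha\beta n$ then the analogous bound gives $e_G(S,B\setminus T)\ge|B\setminus T|\kappa$, which is (D). Otherwise $|T|,|A\setminus S|>\alpha\beta n$, and then: if $|S|<\eps n$ we have $\sum_{a\in S}n^+_a<\eps\beta n^2<\alpha(1-\alpha-\alpha')\beta^2n^2<|T|\kappa'$, so (A) holds with $e_G(S,B\setminus T)\ge0$, and dually (D) holds if $|B\setminus T|<\eps n$. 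Finally, if $|S|,|B\setminus T|\ge\eps n$ then $\eps$-regularity gives $e_G(S,B\setminus T)\ge(\beta-\eps)|S||B\setminus T|$: if $(\beta-\eps)|S|\ge\kappa$ this yields $e_G(S,B\setminus T)\ge|B\setminus T|\kappa$ and hence (D); while if $(\beta-\eps)|S|<\kappa$ then $|S|<\kappa/(\beta-\eps)$, and a direct computation --- substituting $\kappa=(1-\alpha)\beta n$, $\kappa'=(1-\alpha-\alpha')\beta n$ and $|T|=n-|B\setminus T|$, and using $\alpha'\ll\alpha$ together with $\eps\ll\beta$ to see that $(1-\alpha)^2/(1-\eps/\beta)\le 1-\alpha-\alpha'$ --- shows $|S|\bigl(\kappa-(\beta-\eps)|B\setminus T|\bigr)\le|T|\kappa'$, which is exactly (A).

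The hard part will be this last stage: the case analysis is genuinely two-sided --- in some ranges of the pair $(|S|,|T|)$ only (A) succeeds and in others only (D) --- and in the range where all of $S$, $T$, $A\setminus S$, $B\setminus T$ have linear size one must keep track of the constants precisely to confirm that the $\alpha$-slack absorbs both the $\eps$-regularity error and the largest possible value $\alpha'\beta n$ of the perturbations $m^+_a,m^-_b$. By contrast, the flow reduction, the integrality of the maximum flow, and the polynomial-time conclusion are all routine.
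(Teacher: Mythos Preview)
Your proof is correct and follows exactly the approach the paper indicates: the paper does not give its own proof of this lemma but cites it as Lemma~16 in~\cite{kot} and remarks that ``the proof is algorithmic (as it is based on the Max-Flow-Min-Cut Theorem)''. Your flow network, the reduction to the cut inequality $(\star)$, and the case analysis on the sizes of $S$, $T$, $A\setminus S$, $B\setminus T$ (using the minimum-degree bound from superregularity when one side is tiny, $\eps$-regularity when both are large, and the slack $\alpha\gg\alpha'+\eps/\beta$ to close the final case) reproduce precisely that argument.
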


\subsection{Decomposing regular digraphs into 1-factors} \label{petersensec}

Petersen proved that every regular undirected graph can be decomposed into 1-factors. The corresponding result for directed graphs is well known; for completeness we include the proof (which is algorithmic as perfect matchings can be found in polynomial time).

\begin{proposition} \label{petersen}
Any $r$-regular multidigraph $G$ contains $r$ edge-disjoint 1-factors.
\end{proposition}
\begin{proof}
Define an undirected bipartite graph $J$ with two vertex classes $A$ and $B$, each of which is a copy of $V(G)$, with an edge from $a \in A$ to $b \in B$ for each edge from $a$ to $b$ in $G$. $J$ is $r$-regular so, by Hall's Theorem \cite{hall}, contains a perfect matching $M_1$. Then $J \setminus M_1$ is $(r-1)$-regular so contains a perfect matching $M_2$. Repeating this procedure we can decompose $J$ into $r$ perfect matchings, each of which corresponds to a 1-factor in $G$.
\end{proof}

\subsection{Unwinding cycles} \label{unwindsec}

At two points in the proof, we will partition a blown-up cycle into several longer, thinner blown-up cycles on subclusters of the original clusters. The following section describes how this process is implemented and describes a special approximate decomposition to be used in Section \ref{unwinding}.

Suppose that $D = p \otimes C_n$ is a $p$-fold blow-up of a cycle $C_n$ of length $n$.
Let $X_1, \ldots, X_n$ be the vertex classes of $D$.
We call any edge-disjoint collection $C^1 , \ldots , C^{p'}$ of $p'$ Hamilton cycles of $D$ a $p'$-\emph{unwinding} of $D$. 
The following lemma guarantees a $(p-1)$-unwinding in which, for each $C^d$ and each $i$, the $i$th vertices of two distinct classes $X_j$ and $X_{j'}$ have distance at least $p$ on $C^d$.

\begin{lemma} \label{unwind}
Suppose that $p>2$ is a prime, suppose $n \in \mathbb{N}$ and let $D = p \otimes C_n$ be a $p$-fold blow-up of a cycle $C_n$ of length $n$. Denote the vertex classes of $D$ by $X_1 , \ldots , X_n$ where, for all $j$ with $1 \leq j \leq n$, we have $X_j = \lbrace x_j^1 , \ldots , x_j^p \rbrace$. Then $D$ contains a $(p-1)$-unwinding $C^1 , \ldots , C^{p-1}$ such that for every $1 \leq d \leq p-1$ and every $1 \leq i \leq p$,
\begin{itemize}

\item[(i)] if $p$ is coprime to $n$, then the vertices $x_1^i, \ldots, x_n^i$ have pairwise distance at least $p$ on $C^d$; 
\item[(ii)] if $p$ is not coprime to $n$, then the vertices $x_1^i, \ldots, x_{n-2}^i$ have pairwise distance at least $p$ on $C^d$.
\end{itemize}
\end{lemma}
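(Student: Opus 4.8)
The plan is to build the unwinding explicitly using a coordinate system on the blow-up, analogous to the classical construction of a Hamilton decomposition of a blow-up of a cycle. Identify the vertex $x_j^i$ with the pair $(j,i) \in \mathbb{Z}_n \times \mathbb{Z}_p$. The edges of $D$ go from layer $j$ to layer $j+1$ (indices mod $n$), so a Hamilton cycle of $D$ is determined by choosing, for each $j$, a bijection $\sigma_j : \mathbb{Z}_p \to \mathbb{Z}_p$ telling us that the copy $(j,i)$ is followed by $(j+1, \sigma_j(i))$; the resulting $1$-factor is a single Hamilton cycle precisely when the composite permutation $\sigma_{n-1}\sigma_{n-2}\cdots\sigma_0$ acting on $\mathbb{Z}_p$ is an $np$-cycle, equivalently when it is a $p$-cycle on $\mathbb{Z}_p$ (a cyclic shift by a nonzero amount). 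The natural choice is to take $\sigma_j$ to be the shift $i \mapsto i + s_j$ for carefully chosen $s_j \in \mathbb{Z}_p$, so that the whole cycle $C$ corresponding to a vector $(s_0,\dots,s_{n-1})$ visits $(j,i)$, $(j+1, i + s_j)$, $(j+2, i + s_j + s_{j+1})$, and so on; after one full loop around $C_n$ the index has advanced by $S := s_0 + \dots + s_{n-1}$, so $C$ is a Hamilton cycle iff $S \not\equiv 0 \pmod p$.

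For the unwinding itself, I would index the $p-1$ Hamilton cycles by $d \in \{1,\dots,p-1\}$ and, for cycle $C^d$, use the constant shift vector $s_j^{(d)} = t_d$ for all $j$ in the coprime case, where $t_1,\dots,t_{p-1}$ are chosen so that the edge sets are disjoint. The edge of $C^d$ leaving layer $j$ at copy $i$ is $\{(j,i),(j+1,i+t_d)\}$; in the bipartite graph between layers $j$ and $j+1$ (a copy of $K_{p,p}$, which decomposes into $p$ perfect matchings indexed by the shift), $C^d$ uses matching $t_d$. So disjointness just requires $t_1,\dots,t_{p-1}$ to be distinct nonzero elements of $\mathbb{Z}_p$ — i.e. $\{t_1,\dots,t_{p-1}\} = \mathbb{Z}_p \setminus \{0\}$ — and then the $p-1$ cycles together use all of $K_{p,p}$ except the identity matching between each consecutive pair of layers, and each is a Hamilton cycle since $S = n t_d \not\equiv 0 \pmod p$ (here using that $p$ is prime and coprime to $n$, so $n t_d \ne 0$). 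When $p$ is not coprime to $n$ we have $nt_d \equiv 0$, so a constant shift vector fails; I would instead perturb the shift in two layers, e.g. set $s_j^{(d)} = t_d$ for $j \le n-3$ and adjust $s_{n-2}^{(d)}, s_{n-1}^{(d)}$ so that $S \not\equiv 0$ while still keeping, for each fixed pair of layers $(j,j+1)$, the $p-1$ matchings used across $d=1,\dots,p-1$ all distinct — this is where the two exceptional layers $X_{n-1}, X_n$ come from in statement (ii).

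It remains to verify the distance property. On $C^d$ with shifts $s_j^{(d)}$, to travel from $(j,i)$ to $(j', i)$ (same copy index $i$, two distinct layers $j \ne j'$), one must go around $C_n$ a number of times, say $k \ge 1$, while the total index shift returns to $0$: that is $k$ must be a multiple of $p/\gcd(S,p)$. In the coprime case $S = nt_d$ is nonzero in $\mathbb{Z}_p$, and since $p$ is prime $\gcd(S,p)=1$, so $k$ is a multiple of $p$, meaning the walk from $(j,i)$ to $(j',i)$ traverses at least $p$ full loops — hence has length $\ge pn > p$ along $C^d$, except we must be slightly careful that the \emph{shorter} of the two arcs of $C^d$ between these points still has length $\ge p$; since the total length is $np$ and one arc has length $\ge pn - (\text{less than } n) $... actually the cleanest statement is: consecutive occurrences (in either direction along $C^d$) of copy-index $i$ among layers $X_1,\dots,X_n$ are $\ge p$ apart because between two such occurrences the cycle must pass through a full period of the shift, which takes $p$ steps per layer-advance is not quite it — here I would argue directly that if $(j,i)$ and $(j',i)$ were within distance $< p$ on $C^d$, then following the $< p$ edges between them the cumulative shift would be a sum of fewer than $p$ copies of values in $\{t_d\}$ (constant), so equal to $m t_d$ for some $1 \le m < p$ if $j' > j$ going forward, which is nonzero in $\mathbb{Z}_p$ since $p$ is prime and $t_d \ne 0$ — contradiction. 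This last argument is the main obstacle: making the distance bookkeeping precise in the non-coprime case (statement (ii)), where the two perturbed layers mean the cumulative shift over a short arc is $m t_d$ plus a bounded correction, so one has to check the correction cannot cancel $m t_d$ for the relevant range of $m$, which is exactly why the conclusion is restricted to copies in $X_1,\dots,X_{n-2}$.
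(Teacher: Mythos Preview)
Your argument for part (i) is correct and is essentially the same construction as in the paper: writing it in your notation, the paper takes $t_d=d$, so that $C^d$ uses the constant shift $i\mapsto i+d$ between consecutive layers, and the distance property follows exactly from your observation that a walk of length $m<p$ accumulates a shift of $m d\not\equiv 0\pmod p$.

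The gap is in part (ii). You correctly identify that two layers must be perturbed and that the distance bookkeeping is the crux, but you neither specify the perturbation nor carry out the check --- and a careless choice \emph{does} fail. For a concrete counterexample to the approach of simply ``adjusting $s_{n-1}^{(d)},s_n^{(d)}$ so that $S\not\equiv 0$'': take $n=p=5$, keep $s_1^{(d)}=s_2^{(d)}=s_3^{(d)}=d$, and set $s_4^{(d)}=d$, $s_5^{(d)}=2d$. Then the total shift is $6d\equiv d\ne 0$, the $p-1$ matchings in each layer are distinct, yet the backward arc from $x_1^i$ through $X_5,X_4,X_3$ to $x_2^i$ has length $4<p$ (the cumulative shift is $-2d-d-d-d=-5d\equiv 0$). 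So your ``correction cannot cancel $mt_d$'' criterion is not automatic; it genuinely constrains the perturbation.

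The paper sidesteps this computation entirely with a subdivision trick. Since $p>2$ and $p\mid n$, the length $n-2$ is coprime to $p$; apply part (i) to $\tilde D=p\otimes C_{n-2}$ to get Hamilton cycles $\tilde C^1,\ldots,\tilde C^{p-1}$. Now replace each edge $x_{n-2}^a x_1^{a+d}$ of $\tilde C^d$ by a path $x_{n-2}^a\, x_{n-1}^{a+d}\, x_n^{?}\, x_1^{a+d}$ through the two extra layers, where the middle vertex is chosen via any system of $p-1$ edge-disjoint perfect matchings of $(X_{n-1},X_n)$. The resulting $C^d$ is then a \emph{subdivision} of $\tilde C^d$, so distances between vertices of $X_1,\ldots,X_{n-2}$ can only increase, and (i) already gives the bound $\ge p$. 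In your shift language this amounts to choosing $s_{n-1}^{(d)}+s_n^{(d)}\equiv 0$ (for instance $s_{n-1}^{(d)}=d$ and $s_n^{(d)}=-d$), which makes the detour through $X_{n-1},X_n$ index-neutral; once you see this, your framework recovers the paper's proof, but without the subdivision observation the verification is both delicate and easy to get wrong.
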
 

\begin{proof}
We first prove (i). 
Let $\lbrace a \rbrace$ denote the residue of $a$ modulo $p$ and $[b]$ the residue of $b$ modulo $n$ where we adopt the convention that $\lbrace \ell p \rbrace := p$ and $[\ell n] := n$ for any $\ell \in \mathbb{N}$.
For $1 \leq d \leq p-1$ define the modular arithmetic progression 
\[
P(d) := \left( \lbrace 1 \rbrace , \lbrace 1+d \rbrace , \ldots , \lbrace 1+(np-1)d \rbrace \right)
\]
in $\mathbb{Z}_p$. For each $1 \leq k \leq np$ and $1 \leq d \leq p-1$, define the edge
$$
e_k^d := x_{[k]}^{P(d)_k} x_{[k+1]}^{P(d)_{k+1}},
$$
where $P(d)_k$ denotes the $k$th term of $P(d)$ and $P(d)_{np+1} := P(d)_1$. We define $C^d$ to be the digraph with vertex set $V(D)$ and edges $e_1^d , \ldots , e_{np}^d$ (see Figure \ref{fig:unwind}). Note that $C^d$ is clearly a closed walk in $D$.

\tikzstyle{every node}=[circle, draw, fill=black!50,
                        inner sep=0pt, minimum width=2pt]

\begin{center}
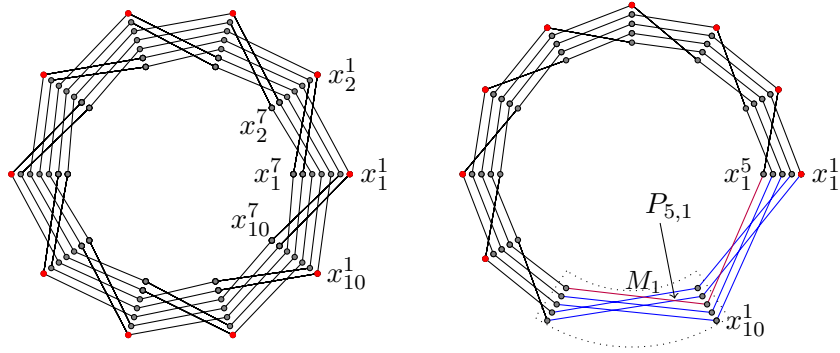
\begin{figure} 
\centering
\begin{tikzpicture}[scale=0.5]
\begin{scope}
    \draw \foreach \x in {0,36,...,324} \foreach \y in {4.5, 4.25, 4, 3.75, 3.5}
    {
        (\x:\y)  -- (\x+36:\y-0.5)
        (\x:3.25)  -- (\x+36:4.5)
        (\x:3)  -- (\x+36:4.25)
        (\x:\y) node {}
        (\x:3) node {}
        (\x:3.25) node {}
    };
    \draw (0:4.5) node[label=right:$x_1^1$] {};
    \draw (0:3) node[label=left:$x_1^7$] {};
    \draw (36:4.5) node[label=right:$x_2^1$] {};
    \draw (36:3) node[label=225:$x_2^7$] {};
    \draw (324:4.5) node[label=right:$x_{10}^1$] {};
    \draw (324:3) node[label=135:$x_{10}^7$] {};
    \draw[color=red] \foreach \x in {0,36,...,324}
      {
         (\x:4.5) node[fill=red] {}
      };
\end{scope}

\begin{scope}[xshift=12cm]
\draw (275:3.55) node[color=white,label=above:\small $M_1$] {};
    \draw \foreach \x in {0,30,...,210} \foreach \y in {4.5, 4.25, 4, 3.75}
    {
        (\x:\y)  -- (\x+30:\y-0.25)
        (\x:3.5)  -- (\x+30:4.5)
        (\x:\y) node {}
    };
 \draw[color=blue] \foreach \x in {240}
    {
        (\x:3.75)  -- (\x+60:4.25)
        (\x:4) -- (\x+60:4.5)
        (\x:4.25) -- (\x+60:3.5)
        (\x:4.5) -- (\x+60:3.75)
    };
 \draw[color=blue] \foreach \x in {300}
    {
        (\x:3.5)  -- (\x+60:4.25)
        (\x:3.75)  -- (\x+60:4.5)
        (\x:4.25) -- (\x+60:3.75)
        (\x:4.5) -- (\x+60:4)
    };
\draw[color=purple] (240:3.5) -- (300:4) -- (360:3.5);
\draw (300:2) node[color=white,label=90:{$P_{5,1}$}] {} ;
\draw[->] (300:1.5) -- (289:3.5);

\draw[dotted] (236:3.1) -- (236:4.6)
                        arc (236:304:4.6) -- (304:4.6)
                        (304:4.6) -- (304:3.1)
                        arc (304:236:3.1) -- (236:3.1);

    \draw (0:4.5) node[label=0:$x_{1}^1$] {};
    \draw (0:3.5) node[label=180:$x_{1}^5$] {};
    \draw (300:4.5) node[label=right:$x_{10}^1$] {};
    \draw \foreach \x in {0,30,...,240,300} \foreach \y in {4.5, 4.25, 4, 3.75, 3.5}
      {
         (\x:\y) node {}
      };
    \draw[color=red] \foreach \x in {0,30,...,210}
      {
         (\x:4.5) node[fill=red] {}
      };
\end{scope}

\end{tikzpicture}\quad
\caption{Illustrating Lemma~\ref{unwind}(i) with $n = 10, p=7 , d = 2$ and Lemma~\ref{unwind}(ii) with $n=10,p=5,d=1$.}\label{fig:unwind}
\end{figure}
\end{center}

\noindent {\sc Claim A.}~
\emph{For each $1 \leq d,d' \leq p-1$ and $1 \leq k,k' \leq np$ the following hold:
\begin{itemize}
\item[(a)] $P(d)$ is periodic with period $p$.
\item[(b)] Suppose $P(d)_k = P(d')_{k'} ~~~\text{and}~~~ P(d)_{k+1} = P(d')_{k'+1}$.
Then $d=d'$.
\end{itemize}}

\noindent
We first show that the claim implies (i). 
First note that (a) and the fact that $n$ is coprime to $p$ imply that every vertex is visited exactly once in the closed walk $C^d$, so $C^d$ must in fact be a Hamilton cycle.
Now suppose $e_k^d = e_{k'}^{d'}$.
Then (b) implies that also $d=d'$. 
Thus no two $C^d$ share an edge; thus $C^1 , \ldots , C^{p-1}$ is a collection of edge-disjoint Hamilton cycles. (a) implies that, on each $C^d$, the distance between $x_\ell^i$ and $x_{\ell'}^i$ is a multiple of $p$ for any $1 \leq \ell,\ell' \leq n$. Therefore $C^1 ,\ldots , C^{p-1}$ have the required property. 

To prove (a) of the claim, note that
$P(d)_k = P(d)_{k'}$ if and only if $1 + kd \equiv 1 + k'd \mod p$ if and only if $k \equiv k' \mod p$ since $d$ is coprime to $p$.
To prove (b), note that $P(d)_k = P(d')_{k'}$ and $P(d)_{k+1} = P(d')_{k'+1}$ imply that
\begin{align}
\label{mod1} 1 + kd &\equiv 1 + k'd' \mod p\\
\label{mod2} 1 + (k+1)d &\equiv 1 + (k'+1)d' \mod p.
\end{align}
Subtracting (\ref{mod1}) from (\ref{mod2}) gives $d \equiv d' \mod p$; but $1 \leq d,d' \leq p-1$ so $d=d'$. This proves the claim and completes the proof of (i).

\medskip
We now prove (ii). So suppose instead that $n$ and $p$ are not coprime. Then $n' := n-2$ is coprime to $p$ since $p>2$. The idea is to use paths derived from the cycles defined above for the first $n'$ clusters and extend them into Hamilton cycles via the remaining clusters. To this end, form an auxiliary blown-up cycle $\tilde{D}$ from $D$ by identifying $x_j^i$ with $x_{j'}^i$ whenever $1 \leq i \leq p$ and $j,j' \in \lbrace n-1,n,1 \rbrace$ and call this vertex $x_{1}^i$ in $\tilde{D}$.  
Now remove any resulting loops from $\tilde{D}$. So $\tilde{D} = p \otimes C_{n-2}$. Next, apply (i) to $\tilde{D}$ to obtain $\tilde{C}^1, \ldots, \tilde{C}^{p-1}$. Now, for each $1 \leq d \leq p-1$, obtain $E_1(C^d)$ from $E(\tilde{C}^d)$ by replacing any edge $x_{n-2}^ix_1^{i'}$ by $x_{n-2}^ix_{n-1}^{i'}$.
Note that, in $D$, $E_1(C^1) , \ldots, E_1(C^{p-1})$ is an edge-disjoint collection of $p-1$ paths each of length $n'$.

\noindent {\sc Claim B.}~
\emph{The collections $E_1(C^1), \ldots , E_1(C^{p-1})$ of edges can be extended into $p-1$ edge-disjoint Hamilton cycles $C^1, \ldots, C^{p-1}$ respectively such that $C^d$ is a subdivision of $\tilde{C}^d$ for each $1 \leq d \leq p-1$. }

\noindent
To see how this implies the lemma, note that since $C^d$ is a subdivision of $\tilde{C}^d$, the distance between any two vertices in $C^d$ is at least the distance in $\tilde{C}^d$. This immediately gives the required property.

It remains to prove the claim. For each $1 \leq d \leq p-1$ we will need to find a collection of edge-disjoint paths from $x_{n-1}^i$ to $x_1^i$ for $1 \leq i \leq p$ to extend $E_1(C^d)$ to $C^d$. Moreover, these collections must be pairwise edge-disjoint.
By Hall's Theorem, we can find $p-1$ edge-disjoint perfect matchings $M_1 , \ldots , M_{p-1}$ in the complete bipartite subgraph $(X_{n-1} , X_{n})$ of $D$. For each $1 \leq d \leq p-1$ and $1 \leq i \leq p$, define
\[
P_{i,d} = x_{n-1}^i x_{n}^{i'} x_1^i
\]
whenever $x_{n-1}^i x_{n}^{i'}$ is an edge in $M_d$. Since the $M_d$ are edge-disjoint matchings, the $P_{i,d}$ are edge-disjoint paths with the required property. Thus, for each $1 \leq d \leq p-1$, defining
\[
E(C^d) := E_1(C^d) \cup \bigcup_{1 \leq i \leq p}{P_{i,d}}
\]
gives $p-1$ edge-disjoint Hamilton cycles $C^1 , \ldots , C^{p-1}$. This proves the claim and completes the proof of (ii).
\end{proof}

\subsection{Merging 1-factors in blown-up cycles} 
In Section~\ref{aldecomp} we will have found an approximate decomposition of a robustly expanding digraph
into 1-factors. The following lemma will use the special structure of the 1-factors  to merge their cycles into a single Hamilton cycle.
It is a special case of Lemma~6.5 in~\cite{KOKelly},  which in turn is based on an idea in~\cite{CKKO}.
As noted in~\cite{KOKelly}, the cycle guaranteed by the lemma can be found in polynomial time.
Roughly speaking, the lemma asserts that if we have a $1$-regular digraph $F$ where most of the edges wind around a `blown-up' cycle $C=V_1\dots V_k$, then under certain circumstances
we can turn $F$ into a (single) cycle by replacing a few edges of $F$ by edges from a digraph $G$ whose edges all wind around $C$.

\begin{lemma}\label{trick}
Let $0<1/m\ll \eps\ll d< 1$.
Let $V_1,\dots,V_k$ be pairwise disjoint clusters, each of size $m$ and let $C=V_1\dots V_k$ be a directed cycle on these clusters.
Let $J\subseteq E(C)$.
Let $G$ be a digraph on $V_1\cup \dots\cup V_k$ such that $G[V_i,V_{i+1}]$ is $(\eps, d)$-superregular
for every $V_i V_{i+1} \in J$.
Suppose that $F$ is a $1$-regular digraph with $V_1\cup \dots \cup V_k\subseteq V(F)$ such that the following properties hold:
\begin{itemize}
\item[\rm{(i)}]For each edge $V_iV_{i+1}\in J$ the digraph $F[V_i,V_{i+1}]$ is a perfect matching.
\item[\rm{(ii)}] For each cycle $D$ in $F$ there is some edge $V_iV_{i+1}\in J$ such that $D$ contains a vertex
in $V_i$.
\item[\rm{(iii)}] Whenever $V_iV_{i+1}, V_jV_{j+1}\in J$ are such that $J$ avoids all edges in the segment $V_{i+1}CV_j$ of
$C$ from $V_{i+1}$ to $V_j$, then $F$ contains a path $P_{ij}$ joining some vertex $u_{i+1}\in V_{i+1}$ to some
vertex $u'_j\in V_j$ such that $P_{ij}$ winds around~$C$.
\end{itemize}
Then we can obtain a cycle on $V(F)$ from $F$ by replacing $F[V_i,V_{i+1}]$ with a suitable perfect matching
in $G[V_i,V_{i+1}]$ for each edge $V_iV_{i+1}\in J$. 
\end{lemma}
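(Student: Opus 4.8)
The plan is to process the edges of $J$ one at a time, at each step replacing the matching $F[V_i,V_{i+1}]$ (for the current edge $V_iV_{i+1}\in J$) by a new perfect matching chosen from $G[V_i,V_{i+1}]$, so as to reduce the number of cycles of the current $1$-regular digraph. Write $J=\{V_{i_1}V_{i_1+1},\dots,V_{i_s}V_{i_s+1}\}$ with the edges ordered cyclically around $C$, and set $J_t$ to be the `segment structure' recording, for the $t$-th edge, the stretch of $C$ between it and the next edge of $J$ that is avoided by $J$; property (iii) gives us, for each such avoided segment, a path $P_{ij}$ of $F$ winding around $C$ from $V_{i+1}$ to $V_j$. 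I would maintain the invariant that at each stage the current digraph $F_t$ is still $1$-regular, still has $F_t[V_a,V_{a+1}]$ a perfect matching for every $V_aV_{a+1}\in J$ not yet processed, and still satisfies the analogue of (iii) for the unprocessed edges. The key observation is that swapping the matching on a single pair $V_iV_{i+1}$ only ever changes which cycles of $F$ pass through $V_i$, so the winding paths $P_{ij}$ associated with \emph{other} avoided segments are undisturbed.

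First I would make precise the `merging move.' Suppose at some stage the current digraph has cycles $D_1,\dots,D_r$ with $r\ge 2$. By (ii) each $D_b$ meets some $V_i$ with $V_iV_{i+1}\in J$. Pick an edge $V_iV_{i+1}\in J$ and look at the perfect matching $F_t[V_i,V_{i+1}]$: its $m$ edges are distributed among the cycles currently passing through $V_i$. If at least two distinct cycles meet $V_i$, I want to find, inside $G[V_i,V_{i+1}]$, a perfect matching that differs from $F_t[V_i,V_{i+1}]$ in a way that merges two of those cycles into one. Concretely, if $u\in V_i$ lies on $D_b$ and $u'\in V_i$ lies on $D_{b'}\ne D_b$, with successors $v,v'\in V_{i+1}$ under the current matching, then replacing the two edges $uv,u'v'$ by $uv',u'v$ merges $D_b$ and $D_{b'}$ — provided the two new edges lie in $G[V_i,V_{i+1}]$. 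Superregularity of $G[V_i,V_{i+1}]$ (via Proposition~\ref{superslice}-type degree bounds, or directly from the definition) guarantees that almost every vertex of $V_i$ has $G$-outneighbourhood of size $(d\pm\eps)m$ in $V_{i+1}$, so one can always choose such a swap, or more robustly pick an entire new perfect matching of $G[V_i,V_{i+1}]$ realizing a prescribed near-bijection: one just needs the new matching to disagree with the old one on a suitable alternating structure, which a Hall-type / defect-version argument supplies since $G[V_i,V_{i+1}]$ is superregular.

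The slightly delicate point is the \emph{bookkeeping of connectivity across segments}: when $J$ skips a long stretch $V_{i+1}CV_j$ of $C$, there is no pair in $J$ there on which to perform a swap, so we cannot merge two cycles that only interact along that stretch by a local move. This is exactly what property (iii) is for: the winding path $P_{ij}\subseteq F$ tells us that the cycle of $F$ through $V_{i+1}$ and the cycle through $V_j$ are in fact the \emph{same} cycle (since $P_{ij}$ joins a vertex of $V_{i+1}$ to a vertex of $V_j$ inside $F$), so we never need to merge across an avoided segment. In other words, (iii) collapses the `connectivity graph' on the blocks of $J$ to something where adjacent elements of $J$ (along $C$) already share a cycle or can be made to, and the local swaps at the $s$ edges of $J$ suffice to bring everything down to a single cycle. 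I would argue this by a potential-function / induction on the number of cycles: each merging move strictly decreases the cycle count by one, and as long as $\ge 2$ cycles remain, (ii) together with (iii) guarantees there is an edge of $J$ at which two currently-distinct cycles both appear, so a valid move exists. After at most $m k$ moves we are left with a single cycle on $V(F)$, and by construction every change we made was of the form `replace $F[V_i,V_{i+1}]$ by a perfect matching in $G[V_i,V_{i+1}]$' for $V_iV_{i+1}\in J$, which is exactly the conclusion.

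The main obstacle I anticipate is not any single move but the \emph{global argument that a valid move always exists while $\ge 2$ cycles remain}, i.e.\ ruling out a `stuck' configuration in which the cycles are arranged so that at every edge of $J$ all edges of the current matching belong to one and the same cycle. Handling this cleanly is where one must use (iii) carefully: the winding paths force the cyclic sequence of `which cycle occupies block $V_{i_t}$' to be such that either two consecutive (in the $J$-order) blocks are occupied by different cycles — giving a local swap — or consecutive blocks are occupied by cycles already identified via a $P_{ij}$, and iterating around $C$ then forces all $r$ cycles to coincide, contradicting $r\ge 2$. Making that dichotomy airtight, together with checking that a single swap preserves the hypothesis (iii) for the remaining unprocessed edges (which holds because the swap only alters cycles through $V_i$, and any $P_{jj'}$ with $V_i$ not an endpoint block is untouched, while endpoint cases are absorbed by re-choosing the relevant path inside the merged cycle), is the technical heart of the proof; everything else is routine superregular-matching manipulation.
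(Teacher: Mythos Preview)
There is a genuine gap at the heart of your merging move. The conclusion requires that at each $V_iV_{i+1}\in J$ the final matching lie \emph{entirely} in $G[V_i,V_{i+1}]$ (and in the application $F$ and $G$ are edge-disjoint). Your 2-swap $uv,u'v'\mapsto uv',u'v$ merges two cycles but leaves the other $m-2$ matching edges unchanged, so the resulting matching is not in $G$, and your concluding assertion that ``every change we made was of the form `replace $F[V_i,V_{i+1}]$ by a perfect matching in $G[V_i,V_{i+1}]$'\,'' is false. You also float the alternative of picking ``an entire new perfect matching of $G[V_i,V_{i+1}]$'' via a Hall-type argument, but Hall only supplies \emph{some} perfect matching: if you delete the current matching the cycles through $V_i$ become $m$ paths encoded by a bijection $\sigma:V_{i+1}\to V_i$, and a generic $G$-matching $M$ reassembles them into as many cycles as the permutation $\sigma\circ M$ has, which for a typical $M$ is about $\log m$ --- there is no reason it should be fewer than before. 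So neither version of your move works: the 2-swap controls the cycle structure but does not land in $G$; a Hall matching lands in $G$ but does not control the cycle structure. (There is also a secondary issue: if two cycles each meet $V_i$ in a single vertex, the required 2-swap edges $uv',u'v$ need not both lie in $G$.)

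What the paper does (following~\cite{KOKelly}) is to get both at once via Theorem~\ref{1hc}. At a given $V_iV_{i+1}\in J$, delete the current matching and form the auxiliary digraph $A$ on $V_i$ with $u\to u'$ whenever $u\,\sigma^{-1}(u')\in E(G[V_i,V_{i+1}])$. Superregularity of $G[V_i,V_{i+1}]$ makes $A$ a robust outexpander with linear minimum semidegree, so Theorem~\ref{1hc} yields a Hamilton cycle in $A$; that Hamilton cycle is exactly a perfect matching in $G[V_i,V_{i+1}]$ merging \emph{all} cycles through $V_i$ into one in a single step. One then processes the edges of $J$ once each in cyclic order; property~(iii) (whose winding path $P_{ij}$ uses no $J$-edge and hence survives every replacement) guarantees that the single growing cycle reaches the next $V_j$, and together with~(ii) this forces the final $1$-factor to be a Hamilton cycle on $V(F)$. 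The paper itself summarises the argument as ``repeated applications of Theorem~\ref{1hc} to a suitable auxiliary digraph''.
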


It will also be convenient to use the following result from~\cite{KOTchvatal}, which guarantees a Hamilton cycle in a robustly expanding digraph.
The proof of Lemma~\ref{trick} actually consists of repeated applications of Theorem~\ref{1hc} to a suitable auxiliary digraph.
The proof of Theorem~\ref{1hc} can be made algorithmic but this is not needed here as we only apply it to a `reduced' digraph, obtained from the regularity lemma.

\begin{theorem} \label{1hc} 
Let $n_0$ be a positive integer and $\alpha , \nu , \tau$ be positive constants such that $1/n_0 \ll \nu \leq \tau \ll \alpha < 1$. Let $G$ be a digraph on $n \geq n_0$ vertices with $\delta^0(G) \geq \alpha n$ which is a robust $(\nu , \tau)$-outexpander. Then $G$ contains a Hamilton cycle.
\end{theorem}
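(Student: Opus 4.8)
We sketch the proof (the full argument is in~\cite{KOTchvatal}). The plan is to apply the diregularity lemma to reduce the problem to finding a Hamilton cycle in the (bounded-size) reduced digraph, and then to \emph{thread} such a cycle back through $G$. So we fix auxiliary constants $1/n_0 \ll 1/M' \ll \eps \ll d \ll \nu$ and apply Lemma~\ref{direg} to $G$ with parameters $\eps$, $d$, $M'$, obtaining clusters $V_1,\dots,V_L$, an exceptional set $V_0$ with $|V_0|\le\eps n$, a pure digraph $G'$, and a reduced digraph $R$; let $R'$ be the subdigraph of $R$ whose edges correspond to pairs of density at least $d$. By Lemma~\ref{inherit}, $R'$ is a robust $(\nu/4,3\tau)$-outexpander, and a standard averaging argument (as in the proof of Lemma~\ref{reduced}(i), using that $d^\pm_{G'\setminus V_0}(x)>(\alpha-2d)n$ from Lemma~\ref{direg}) gives $\delta^0(R')\ge\alpha L/2$. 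Since $3\tau\ll\alpha$ and $L\ge M'$ can be assumed large, $R'$ then satisfies the same kind of hypothesis as $G$, but on fewer vertices.

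The core of the argument is to find a Hamilton cycle in $R'$. First, $R'$ contains a $1$-factor $\mathcal{F}$ (a spanning union of vertex-disjoint cycles): the bipartite graph associated with $R'$ (with an edge $u^+v^-$ for each edge $uv$ of $R'$) satisfies Hall's condition, since for nonempty $S\subseteq V(R')$ we have $|N^+_{R'}(S)|\ge\delta^+(R')\ge\alpha L/2\ge|S|$ when $|S|\le 3\tau L$; $|N^+_{R'}(S)|\ge|RN^+_{\nu/4,R'}(S)|\ge|S|+\nu L/4$ when $3\tau L\le|S|\le(1-3\tau)L$; and when $|S|>(1-3\tau)L$ every cluster has an inneighbour in $S$ (otherwise some cluster would have all its $\ge\alpha L/2$ inneighbours outside $S$, forcing $|S|\le(1-\alpha/2)L<(1-3\tau)L$), so $N^+_{R'}(S)=V(R')$. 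Next one merges the cycles of $\mathcal{F}$ one at a time into a single Hamilton cycle of $R'$: from any spanning union of $t\ge 2$ vertex-disjoint cycles, the robust outexpansion of $R'$ can be used — after rerouting along the cycles if necessary — to produce a spanning union of $t-1$ vertex-disjoint cycles, and since $t\le L$ this terminates. Relabelling, write the resulting Hamilton cycle of $R'$ as $\mathcal{C}=V_1V_2\cdots V_LV_1$.

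It remains to thread a genuine Hamilton cycle of $G$ through $\mathcal{C}$. After discarding at most $4\eps m$ vertices from each cluster, Lemma~\ref{supreg} lets us assume $(V_i,V_{i+1})_G$ is $(\eps',d_i)$-superregular with $d_i\ge d$ for each $i$. Every discarded vertex, and every vertex of $V_0$, still has at least $(\alpha-\eps)n$ in- and outneighbours in the retained parts of the clusters, so it has $\ge\alpha m/2$ inneighbours in some cluster and $\ge\alpha m/2$ outneighbours in some cluster; since $\delta^0(R')\ge\alpha L/2$ provides many suitable positions, these `leftover' vertices can be distributed among the edges of $\mathcal{C}$ with at most $\eps m$ of them assigned to any one edge $V_iV_{i+1}$, each hung between two clusters in which it has linearly many in-, respectively out-, neighbours. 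A standard lemma on superregular blown-up cycles (see e.g.~\cite{KOKelly}; this does not invoke Lemma~\ref{trick}) then produces a single spanning cycle of this augmented blown-up cycle: one chooses a perfect matching in each pair $(V_i,V_{i+1})_G$, arranging that it contains a dedicated edge $x_zy_z$ (with $x_z\in N^-(z)$, $y_z\in N^+(z)$) for each leftover vertex $z$ hung on that edge, using Proposition~\ref{superslice} to keep the `unused' parts superregular, and using the freedom in the final matching to close the resulting $1$-factor into a single cycle; finally each $x_zy_z$ is replaced by $x_zzy_z$. This is a Hamilton cycle of $G$.

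The main obstacle is the cycle-merging step in $R'$: this is precisely where one needs robust outexpansion rather than merely a minimum-semidegree bound (a robust outexpander can have $\delta^0$ far below half its order, so classical results such as Ghouila-Houri's theorem do not apply), and making it work — showing that some two cycles of any spanning union of $\ge 2$ vertex-disjoint cycles of $R'$ can always be joined using edges of $R'$ — requires a careful rerouting argument based on the $\nu$-robust outneighbourhood estimate. The regularity-lemma reduction and the threading step are, by contrast, routine if somewhat lengthy.
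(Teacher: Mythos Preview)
The paper does not prove Theorem~\ref{1hc}; it simply quotes it from~\cite{KOTchvatal} and uses it as a black box (in Claim~\ref{connect1} and implicitly via Lemma~\ref{trick}). So there is no ``paper's own proof'' to compare against here.

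Your sketch does follow the argument of~\cite{KOTchvatal}: regularity lemma, robust outexpansion of the reduced digraph, Hamilton cycle in $R'$, then threading back through the superregular blown-up cycle while absorbing exceptional vertices. The $1$-factor step via Hall's condition is fine, and the threading step is routine as you say. The one place where your write-up is genuinely incomplete is the merging step in $R'$: you assert that from any spanning union of $t\ge 2$ vertex-disjoint cycles one can reroute using robust outexpansion to get $t-1$ cycles, but you give no mechanism for this. In~\cite{KOTchvatal} this is the heart of the proof and takes real work (one shifts along a cycle to find many candidate ``exit points'', uses robust outexpansion to guarantee many ``entry points'' on another cycle, and then shows these can be linked compatibly). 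Since you explicitly flag this as the main obstacle and defer to~\cite{KOTchvatal}, your sketch is honest about where the content lies; just be aware that ``rerouting along the cycles if necessary'' is hiding a page or two of argument, not a one-liner.
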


\section{The proof of Theorem~\ref{main}} \label{sec:mainproof}

\subsection{Applying the diregularity lemma} \label{begin}
We choose $\tau$ so that $\tau \ll \alpha$.
Without loss of generality we may assume that $\nu \ll \tau$ as
any robust $(\nu, \tau)$-outexpander is also a robust $(\nu', \tau)$-outexpander for any $\nu' \leq \nu$.
We may also assume that
$0 < \eta \ll \nu$ as a collection of $(1 - \eta^\prime)r$ 
edge-disjoint Hamilton cycles certainly contains a collection of $(1 - \eta)r$ edge-disjoint Hamilton cycles if $\eta^\prime \leq \eta$. 
Define further constants satisfying
\begin{align} \label{hierarchy}
0 &< 1/n_0 \ll 1/M \ll 1/M^\prime \ll \tilde{\eps} \ll \eps \ll \eps^\prime \ll \xi \ll 1/p  \nonumber\\
&\ll \beta \ll d \ll 1/s \ll \gamma \ll d' \ll \eta \ll \nu \ll \tau \ll \alpha,
\end{align}
where $s \in \mathbb{N}$ is even and $p$ is a prime.

Let $G$ be a digraph of order $n \ge n_0$ such that $G$ is an $r$-regular robust $(\nu, \tau)$-outexpander
with $r \ge \alpha n$.
Define $\tilde{\alpha}$ by $r=\tilde{\alpha} n$. 
Apply the diregularity lemma (Lemma~\ref{direg}) to $G$ with parameters $\tilde{\eps}^{12} , d, M^\prime$ 
to obtain clusters $\tilde{V}_1 , \ldots , \tilde{V}_{\tilde{L}}$ of size $\tilde{m}$, an exceptional set $V_0$, 
a pure digraph $G'$ and a reduced digraph $\tilde{R}$. So $\vert \tilde{R} \vert = \tilde{L}$ and $M' \le \tilde{L} \le M$.
We denote the above partition of $G$ by $\tilde{\mathcal{P}}$ and call the $\tilde{V}_j$ the 
\emph{clusters} of $\tilde{\mathcal{P}}$, frequently referred to as \emph{base primary clusters} (to distinguish them from other types of cluster defined later on). 
Let $\tilde{R}^\prime$ be the spanning subdigraph of $\tilde{R}$ whose edges correspond to pairs of density at least $d^\prime$. 
So $\tilde{V}_i \tilde{V}_j$ is an edge of $\tilde{R}^\prime$ if $(\tilde{V}_i , \tilde{V}_j)_{G^\prime}$ has density at least $d^\prime$. 

When $\tilde{E}$ is an edge of $\tilde{R}$ from $\tilde{V}_i$ to $\tilde{V}_j$ we write $G^\prime(\tilde{E})$ for the subdigraph $(\tilde{V}_i , \tilde{V}_j)_{G^\prime}$ and $d_{ij}$ for the density of this pair. Then by Lemma~\ref{direg}, $G^\prime(\tilde{E})$ is $(\tilde{\eps}^{12},d_{ij})$-regular. Let $\tilde{R}(\beta)$ denote the reduced multidigraph of $G$ (obtained from $\tilde{R}$) with 
parameters $\tilde{\eps}^{12} , \beta, d$ and $M^\prime$. 
Let $\tilde{R}^\prime(\beta)$ be the multidigraph obtained from $\tilde{R}(\beta)$ by including only those edges which also correspond to an edge of $\tilde{R}'$. 
Roughly speaking, our aim is to find an approximate decomposition of $\tilde{R}(\beta)$ into edge-disjoint 1-factors $\tilde{F}$, and then find an approximate Hamilton decomposition of a subdigraph of $G$ consisting mainly of edges that correspond to a pair in $\tilde{F}$. 

For each edge $\tilde{E}$ of $\tilde{R}$, apply Lemma~\ref{randomregslice}(i) to $G^\prime(\tilde{E})$ with parameters $K := \lfloor d_{ij}/\beta \rfloor$ and $\gamma_k := \beta$ for each $ 1\leq k \leq K$ to obtain  $K$ edge-disjoint $(\tilde{\eps}, \beta)$-regular subdigraphs. We associate each of these with a unique edge $E$ from $\tilde{V}_i$ to $\tilde{V}_j$ of $\tilde{R}(\beta)$ and call the corresponding digraph $G'(E)$.

Let $A$ be a cluster of $\tilde{R}$ and let $E(A)$ denote the set of edges incident to $A$ in $\tilde{R}(\beta)$.
For an edge $E$ in $E(A)$ and $x \in A$, we say that the pair $(x,E)$ is \emph{good} if
\begin{itemize}
\item $A$ is the initial cluster of $E$ and $d_{G^\prime(E)}^+(x) =(\beta \pm  2\tilde{\eps})\tilde{m}$; or
\item $A$ is the final cluster of $E$ and $d_{G'(E)}^-(x) = (\beta \pm 2\tilde{\eps})\tilde{m}$
\end{itemize}
and say it is \emph{bad} otherwise
(recall that $\tilde{m}$ is the cluster size). We say that $x$ is \emph{good}
if $x$ forms a bad pair with at most $\xi |E(A)|$ edges in $E(A)$.
Note that for a fixed edge $E$ in $E(A)$, at most $\tilde{\eps} \tilde{m}$ vertices $x \in A$ are bad.
So by double counting the number of bad pairs, it is easy to see that the number of bad vertices in $A$ is at most
$\tilde{\eps} \tilde{m}/ \xi$. %
\COMMENT{($\# bad vertices) \cdot \xi |E(A)| \le \# bad pairs \le |E(A)| \tilde{\eps} \tilde{m}$}

We remove every bad vertex from its cluster as well as possibly some more arbitrary vertices so that exactly $\tilde{\eps}\tilde{m}/\xi$ vertices have been removed from each cluster. 
We then remove at most $2sp$ further vertices from each cluster in order to guarantee that the cluster size is divisible by $2sp$.
We still denote the cluster size by $\tilde{m}$ and still call the clusters \emph{base primary}. 
Each vertex removed here is added to the exceptional set $V_0$, which we now call the \emph{core exceptional set}. So
\begin{equation} \label{V0bound}
\vert V_0 \vert \leq (\tilde{\eps}^{12} + \tilde{\eps}/\xi)n + 2sp\tilde{L} \stackrel{(\ref{hierarchy})}{\le} \sqrt{\tilde{\eps}} n/2.
\end{equation}
We still denote the partition of $V(G)$ into $V_0$ and these clusters by $\tilde{\mathcal{P}}$. 
Note that for each edge $E$ of $\tilde{R}(\beta)$, the digraph $G^\prime(E)$ is still $(\sqrt{\tilde{\eps}} , \beta)$-regular by Proposition~\ref{superslice}(i) (at most $\tilde{\eps}\tilde{m}/4$ vertices were removed from each cluster). %
Lemma~\ref{reduced} implies that
\begin{align} \label{eqreduced}
\nonumber \delta^0(\tilde{R}^\prime) \geq (\tilde{\alpha} - 3d^\prime)\tilde{L} \ \ &\mbox{ and } \ \ \delta^0(\tilde{R}^\prime(\beta)) \geq (\tilde{\alpha} - 4d^\prime)\dfrac{\tilde{L}}{\beta},\\
\delta^0(\tilde{R}(\beta)) \geq (\tilde{\alpha} - 4d)\dfrac{\tilde{L}}{\beta}
 \ \ &\mbox{ and } \ \
 \Delta^0(\tilde{R}(\beta)) \leq (\tilde{\alpha} + 2 \tilde{\eps}^{12})\dfrac{\tilde{L}}{\beta}.
\end{align}
By Lemma~\ref{inherit}, $\tilde{R}^\prime$ is a robust $(\nu/4, 3\tau)$-outexpander.  Note that it is a subdigraph of 
$\tilde{R}^\prime(\beta) \subseteq \tilde{R}(\beta)$ and that all of its edges have multiplicity at least $q := d^\prime/\beta$ in $\tilde{R}^\prime(\beta)$. 
Let 
\begin{equation} \label{eq:defr}
\tilde{r} := (\tilde{\alpha} - \gamma)\tilde{L}/\beta. 
\end{equation} 
Let $n^{\pm}_U := d^{\pm}_{\tilde{R}(\beta)}(U) - \tilde{r}$ and let $\rho := \gamma/\beta$, so $\rho \leq q\nu^2/3$. Note that 
$$
(1 - \frac{4d}{\gamma})\rho \tilde{L} = (\gamma-4d)\frac{\tilde{L}}{\beta} 
\le n_U^\pm \le (\gamma+2\tilde{\eps}^{12}) \frac{\tilde{L}}{\beta} = (1 + \frac{2\tilde{\eps}^{12}}{\gamma})\rho \tilde{L}.
$$ 
So we can 
apply Lemma~\ref{regrobust} to $(G,Q) := (\tilde{R}^\prime , \tilde{R}^\prime(\beta))$  
to obtain a sub-multidigraph $W$ of $\tilde{R}^\prime(\beta)$ (and hence of $\tilde{R}(\beta)$) such that 
 the in- and outdegrees of each cluster $U$ are exactly $n^{\pm}_U$.
 So
$\tilde{R}(\beta) \setminus W$ is a spanning $\tilde{r}$-regular sub-multidigraph of $\tilde{R}(\beta)$. 
Apply Proposition~\ref{petersen} to decompose $\tilde{R}(\beta) \setminus W$ into $\tilde{r}$ 1-factors
$\tilde{F}_1 , \ldots , \tilde{F}_{\tilde{r}}$ of $\tilde{R}(\beta)$. So each $\tilde{F}_t$ corresponds to a 
collection of blown-up cycles spanning $V(G) \setminus V_0$.
Note that this step would not work if we only considered $\tilde{R}$ and $\tilde{R}(\beta)$ and tried to apply Lemma~\ref{regrobust} to find $W$ in $\tilde{R}(\beta)$ directly.

\subsection{Thin auxiliary digraphs $H$} \label{sec:H}

We now define edge-disjoint subdigraphs $H_0^+$, $H_0^-$, $H_1^+$, $H_1^-$ and $H_2$ of $G$, which are sparse `shadows' of the reduced multidigraph. They act as reservoirs 
of well-distributed edges which will be used at various stages in the proof.
The role of $H_0^\pm$ is to connect blown-up cycles (in Section~\ref{connect}) to ensure that our final merging procedure does indeed yield Hamilton cycles.
In Section \ref{exceptional} edges will be taken from $H_1^\pm$ to connect the vertices in the special exceptional sets $V_{0,i}$ 
(defined later)
to the non-exceptional vertices in each slice $G_i$ (defined in Section \ref{unwinding}). 
$H_2$ will be used to construct `balancing edges' which will be introduced in Section \ref{shadow}.
We choose these subdigraphs already at this point because if we remove them later then this might destroy the superregularity of the pairs in the $G_i$.%
\COMMENT{we might want to rename the indices so that they appear in the correct order.Renaming $H_1$ might be messy, so probably best to rename $H_0$ to $H_0$ and $H_2$ to $H_0$ and $H_3$ to $H_2$?}

We obtain $H_0^+, H_0^-, H_1^+, H_1^-, H_2$ as follows. Each has vertex set $V(G)$ and initially contains no edges.
Then, for each edge $E$ of $\tilde{R}(\beta)$, $G'(E)$ is a $(\sqrt{\tilde{\eps}},\beta)$-regular pair and we can apply Lemma~\ref{randomregslice}(i) to $G'(E)$ with $\gamma_1 :=\beta_1$ where
\begin{equation} \label{eq:beta_1}
\beta_1:=(1-5\gamma)\beta
\end{equation}
and $\gamma_2 := \ldots = \gamma_6 := \gamma\beta$,
to obtain six edge-disjoint pairs $J_1, \ldots, J_6$, where $J_k$ is $(\tilde{\eps}^{1/24},\gamma_k)$-regular, and we call these digraphs $G^*(E)$, $H_0^+(E)$, $H_0^-(E)$, $H_1^+(E)$, $H_1^-(E)$ and $H_2(E)$ respectively. 
We denote the union of $H(E)$ over all edges $E$ of $\tilde{R}(\beta)$ by $H$.
We will only use the weaker bounds that the `remaining' subdigraph $G^*(E)$ of $G'(E)$ is $(\eps/8,\beta_1)$-regular and for each $H = H_0^+, H_0^-, H_1^+, H_1^-, H_2$ we have that $H(E)$ is $(\eps,\gamma\beta)$-regular.
Moreover, Lemma~\ref{randomregslice}(i) implies that if $E$ is an edge from $A$ to $B$ and if $x \in A$ and $y \in B$ are good for $E$, then
\begin{equation} \label{xgoodE}
d_{H(E)}^+(x), d_{H(E)}^-(y) = (\gamma\beta \pm 2\eps)\tilde{m}.
\end{equation}
Note also that $V_0$ is isolated in each $H$.
We now derive some further properties of these digraphs which we will need later.
Firstly, we have the following property for $H_0^+$ and $H_0^-$:
\begin{itemize}
\item[(H0)] Suppose that $\tilde{A}\tilde{B}$ is an edge of $\tilde{R}$. Then for at least $(1-\eps')|\tilde{A}|$ of the vertices $x \in \tilde{A}$ and $(1-\eps')|\tilde{B}|$ of the vertices $y \in \tilde{B}$ we have
$$
|N^+_{H_0^+}(x) \cap \tilde{B} \vert \ge  \gamma d \tilde{m}/2\ \ \mbox{ and }\ \ |N^-_{H_0^-}(y) \cap \tilde{A} \vert \ge  \gamma d \tilde{m}/2.
$$
\end{itemize}
To see this, note first that every edge $E$ of $\tilde{R}$ has multiplicity at least $d/\beta$ in $\tilde{R}(\beta)$.
Let $E_1,\dots,E_\ell$ be the edges of $\tilde{R}(\beta)$ corresponding to $E$. So $d/\beta \leq \ell \leq 1/\beta$.
Recall that $H_0^+(E_i)$ is $(\eps,\gamma \beta)$-regular. 
Let $A'$ be the set of all vertices $x \in \tilde{A}$ such that $x$ has outdegree at least $(\gamma \beta -2\eps) \tilde{m}$ in each of $H_0^+(E_1),\dots,H_0^+(E_\ell)$.
Then $|A'| \ge (1-\ell\eps)\tilde{m} \ge (1-\eps')\tilde{m}$. Moreover, for all $x \in A'$, we have
$$|N^+_{H_0^+}(x) \cap \tilde{B}| \ge \ell(\gamma \beta -2\eps) \tilde{m} \ge \frac{d}{\beta} \frac{\gamma \beta}{2} \tilde{m} \ge \frac{\gamma d \tilde{m}}{2}.
$$
The proof of the second inequality is similar.

We also have the following property of $H_1^+$ and $H_1^-$:
\begin{itemize}
\item[(H1)] For all $x \in V(G) \setminus V_0$, we have
$
\gamma \tilde{\alpha} n/3 \leq d_{H_1^+}^\pm(x),d_{H_1^-}^\pm(x) \leq 2 \gamma \tilde{\alpha} n.
$
\end{itemize}
(H1) follows from the fact that $V_0$ contains all the bad vertices (in the sense of Section \ref{begin}).
Indeed, since any vertex $x \in V(G)\setminus V_0$ is good we have
$$
d^+_{H_1^+}(x) \stackrel{(\ref{xgoodE})}{\ge} \delta^+(\tilde{R}(\beta))(1- \xi) (\gamma \beta -2\eps)\tilde{m}
\stackrel{(\ref{eqreduced})}{\ge} \frac{\tilde{\alpha} \tilde{L}}{2\beta} \gamma \beta \tilde{m} 
\ge \gamma \tilde{\alpha} n/3.
$$
The other bounds in (H1) follow similarly.%
\COMMENT{$d^+_{H_1^+}(x) \stackrel{(\ref{xgoodE})}{\le} \Delta^+(\tilde{R})(\beta)(\gamma \beta +2\eps)\tilde{m}$ }


\subsection{Unwinding cycles} \label{unwinding}

For each $1 \leq t \leq \tilde{r}$ we now apply Lemma~\ref{supreg} to each cycle in $\tilde{F}_t$ to remove vertices from each cluster, so that they now have 
size exactly $(1-\eps/4)\tilde{m}$ and such that each edge $E$ of $\tilde{F}_t$ corresponds to an
$(\eps/2 , \beta_1)$-superregular pair $G^*(E)$.
By removing at most $2sp$ further vertices from each cluster we obtain clusters of size $m$ such that $2sp \mid m$. We call these \emph{adapted primary clusters} or \emph{adapted primary} $(t)$-\emph{clusters} if we wish to emphasise the dependence on $t$, and say that each such cluster is \emph{associated} with the base primary cluster from which it was formed.
Since $2sp \leq \eps \tilde{m}/4$ it is easy to see that now each edge $E$ of $\tilde{F}_t$ 
corresponds to an $(\eps , \beta_1)$-superregular pair $G^*(E)$.
Note that
\begin{equation} \label{boundL}%
\COMMENT{This relation is we discussed a couple of times, but somehow always forgot to put in the paper.}
\frac{1}{m} \le \frac{2 \tilde{L}}{n} \le \frac{2M}{n_0} \ll \frac{1}{\tilde{L}}\ \ \mbox{ and } \ \ (1-\eps)n \stackrel{(\ref{V0bound})}{\leq} m\tilde{L} \leq \tilde{m}\tilde{L} \leq n. 
\end{equation}

Let $\tilde{V}_{0,t}^{\rm spec}$ denote the set of all those vertices in $G$ which were removed from the clusters in this step.
We call them the \emph{special exceptional vertices} (for the original slice $t$).
So $|\tilde{V}_{0,t}^{\rm spec}| \le \eps n/4 + 2sp\tilde{L} \leq \eps n/2$.
Let $\tilde{V}_{0,t}= V_0 \cup \tilde{V}_{0,t}^{\rm spec}$.
Then
\begin{equation} \label{tildeV0}
\vert \tilde{V}_{0,t} \vert \stackrel{(\ref{V0bound})}{\leq} \dfrac{2\eps n}{3}.
\end{equation}

We denote the collection of the adapted primary $(t)$-clusters together with the exceptional set $\tilde{V}_{0,t}$ by $\mathcal{P}(t)$. Note that $\mathcal{P}(t)$ and $\tilde{\mathcal{P}}$ are $2\eps/3$-close for every $1 \leq t \leq \tilde{r}$ (recall that this notion was defined before Lemma~\ref{equipartition2}).

For each cycle $C$ in a given $1$-factorisation, we would like to ensure that the outneighbourhood of an exceptional vertex is well-distributed on each cycle, in the sense that each cluster $V$ of $C$ contains only a small fraction of the neighbours of any exceptional vertex.
Currently, we cannot guarantee this. But we will be able to achieve this property by considering a refinement of the partition $\mathcal{P}(t)$ for each $t$.
As associated clusters in each $\mathcal{P}(t)$ only differ slightly from one another, we can find this refinement in such a way that the subclusters are also similar by ensuring that all such refinements are close to a refinement of $\tilde{\mathcal{P}}$.

Let $\mathcal{G} = \lbrace G, H_0^+, H_0^-, H_1^+, H_1^-, H_2 \rbrace$.
We now apply Lemma~\ref{equipartition} to our base primary clusters and exceptional set $V_0$ to obtain an $\tilde{\eps}$-uniform 
$s$-refinement $\mathcal{P}_s'$ of $\tilde{\mathcal{P}}$ with respect to $\mathcal{G}$, and we call the resulting subclusters \emph{base} $s$-\emph{clusters}. 
So we have $L_s := s \tilde{L}$ base $s$-clusters. 
Apply Lemma~\ref{equipartition} to $\mathcal{P}_s'$ to obtain an $\tilde{\eps}$-uniform $p$-refinement $\mathcal{P}_p'$ 
of $\mathcal{P}_s'$ with respect to $\mathcal{G}$. Let
\begin{equation} \label{L}
L_p := pL_s = sp\tilde{L}.
\end{equation}
We call the $L_p$ subclusters obtained from an $s$-cluster \emph{base} $p$-\emph{clusters}. 
By the remark before (\ref{doublerefinement}), $\mathcal{P}_p'$ is also a $3\tilde{\eps}$-uniform $sp$-refinement of $\tilde{\mathcal{P}}$. Finally apply 
Lemma~\ref{equipartition} to $\mathcal{P}_p'$ to obtain an $\tilde{\eps}$-uniform $2$-refinement $\mathcal{P}_{2p}'$
of $\mathcal{P}_p'$ with respect to $\mathcal{G}$. The argument before~(\ref{doublerefinement}) implies that $\mathcal{P}_{2p}'$ is a $4\tilde{\eps}$-uniform $2p$-refinement of $\mathcal{P}_s'$ and a $5\tilde{\eps}$-uniform $2sp$-refinement of $\tilde{\mathcal{P}}$. %
\COMMENT{it's a triple refinement, which increases the error bound}
We call the subclusters obtained from an $s$-cluster \emph{base} $2p$-\emph{clusters}. 

Define constants $\eps_s, \eps_p, \eps_{2p}$ such that $\eps \ll \eps_s \ll \eps_p \ll \eps_{2p} \ll \eps'$.
Now do the following for each $t$ with $1 \leq t \leq \tilde{r}$. Apply Lemma~\ref{equipartition2} to $\tilde{\mathcal{P}}$ 
to obtain an $\eps_s$-uniform $s$-refinement $\mathcal{P}_s(t)$ of $\mathcal{P}(t)$ that is $\eps_s$-close to $\mathcal{P}_s'$. 
Next apply Lemma~\ref{equipartition2} to $\mathcal{P}_s'$ 
to obtain an $\eps_p$-uniform $p$-refinement $\mathcal{P}_p(t)$ of $\mathcal{P}_s(t)$ that is $\eps_p$-close to $\mathcal{P}_p'$. By the observation at the end of Section \ref{unirefs}, $\mathcal{P}_p(t)$ is also an $\eps'$-uniform $sp$-refinement of $\mathcal{P}(t)$.
Finally apply Lemma~\ref{equipartition} to $\mathcal{P}_p'$ 
to obtain an $\eps_{2p}$-uniform $2$-refinement $\mathcal{P}_{2p}(t)$ of $\mathcal{P}_p(t)$ that is $\eps_{2p}$-close to $\mathcal{P}_{2p}'$. Again, $\mathcal{P}_{2p}(t)$ is an $\eps'$-uniform $2p$-refinement of $\mathcal{P}_s(t)$ and an $\eps'$-uniform $2sp$-refinement of $\mathcal{P}(t)$. 
For $j=s,p,2p$ we call the clusters of $\mathcal{P}_j(t)$ the \emph{(adapted)} $j$\emph{-clusters} or $j$-$(t)$-\emph{clusters} if we wish to emphasise the dependence on $t$. 
For each such $j$ we have that $\mathcal{P}_j(t)$ is an $\eps'$-uniform refinement of $\mathcal{P}(t)$ that is $\eps'$-close to $\mathcal{P}_j'$, so each adapted $j$-cluster in $\mathcal{P}_j(t)$ is associated with a unique base $j$-cluster in $\mathcal{P}_j'$.
Write
\begin{equation} \label{eq:m}
m_s := m/s\ \ \mbox{ and }\ \ m_p := m/sp
\end{equation}
for the respective sizes of the $s$- and $p$-clusters (which are the same for all $t$ though the clusters themselves are different). Note that
\begin{equation} \label{mp}
m_p \leq \dfrac{n}{L_p} \leq 2m_p.
\end{equation}

By a slight abuse of notation we can consider $\tilde{R}$ and $\tilde{R}(\beta)$ as digraphs on either base or adapted $(t)$-clusters, depending on the context. For each $0 \leq t \leq \tilde{r}$, we now define corresponding reduced digraphs for the refinements defined above, where, for convenience, $\mathcal{P}_j(0) := \mathcal{P}_j'$. 

Let $R_s= s \otimes \tilde{R}$ be the $s$-fold blow-up of $\tilde{R}$, where for a vertex $W$ of $\tilde{R}$ (which is an adapted primary $(t)$-cluster if $t \geq 1$),
the corresponding vertices in $R_s$ are the subclusters of $W$ in $\cP_s(t)$. Define $R_s(\beta) = s \otimes \tilde{R}(\beta)$ analogously.
Also let $R_p = p \otimes R_s$, where for a vertex $U$ of $R_s$ the corresponding vertices in $R_p$ are the subclusters of $U$ in $\mathcal{P}_p(t)$. 
So the vertices of $R_p$ are precisely the $p$-clusters and also $R_p = sp \otimes \tilde{R}$. 
Define $R_p(\beta) = p \otimes R_s(\beta)$ $= sp \otimes \tilde{R}(\beta)$ analogously.
Note that apart from the fact that the clusters which form their vertex sets are slightly different for different values of $t$, these digraphs are the same, so there is no need for any dependence on $t$ in the notation.

Suppose that $\tilde{E}$ is an edge of $\tilde{R}(\beta)$ from $\tilde{U}$ to $\tilde{W}$
and that $U$ is an $s$-cluster which is a subcluster of $\tilde{U}$ and $W$ is an $s$-cluster which a subcluster of $\tilde{W}$.
Note that there is a unique edge $E$ in $R_s(\beta)$ from $U$ to $W$ corresponding to $\tilde{E}$.
Thus to each edge $E$ of $R_s(\beta)$ we can associate the digraph 
\begin{equation} \label{Gstar}
G^*(E) := G^*(\tilde{E})[U,W].
\end{equation}
We make a similar association for each edge $F$ of $R_p(\beta)$ by defining $G^*(F)$ analogously. 

We now use the 1-factors $\tilde{F}_t$ to define edge-disjoint 1-factors $F_j'$ in the reduced digraph $R_s(\beta)$ and then use the $F_j'$ to find edge-disjoint 1-factors $F_i$ in $R_p(\beta)$.
Note that each cycle $C$ of $\tilde{F}_t$ corresponds to an $s$-fold blow-up $C'$ of $C$ in $R_s(\beta)$. So
for each cycle $C$ in $\tilde{F}_t$ we can 
apply Lemma~\ref{unwind} to obtain an $(s-1)$-unwinding $C_1 , \ldots , C_{s-1}$ of $C'$. 
Here we do not need the special properties of the $(s-1)$-unwinding which are guaranteed by Lemma~\ref{unwind}; in fact any unwinding yielding edge-disjoint Hamilton cycles will do.
So $\tilde{F}_t$ corresponds to a set of $(s-1)$ $1$-factors $F'_j$ (with $(t-1)(s-1)+1 \le j \le t(s-1)$) of $R_s(\beta)$. We say that such an $F_j'$ has \emph{original factor type} $t$ (and that $t$ is the \emph{original type} of $j$).
Note that for each cluster $W$ of $\tilde{R}$, there are $s$ clusters of $F'_j$ which are subclusters of $W$.
Moreover, all of these lie on the same cycle of $F_j'$.
Let 
\begin{equation} \label{rs}
r_s := (s-1) \tilde{r}.
\end{equation}
Then altogether this gives us a set of $r_s$ edge-disjoint $1$-factors
$F'_1 , \ldots , F'_{r_s}$ of $R_s(\beta)$.

Consider any cycle $C_\ell$ of a $1$-factor $F'_j$ obtained from a cycle $C$ of $\tilde{F}_t$
as above. Let $K$ be the length of $C$; so $C_\ell$ has length $K s$.
We say that an $s$-cluster lying on $C_\ell$ is \emph{clean for $F'_j$} if it belongs to the last $K$ clusters of $C_\ell$
(where for each cycle we pick a consistent ordering of its vertices in advance).
Note that $K \ge 2$ and so $C_\ell$ has at least two clean $s$-clusters.%
\COMMENT{Previously, we had `at most' 2 clean clusters but needed exactly two later on.}
Moreover,  for each adapted primary cluster $W$, exactly one subcluster of $W$ in $\cP_s(t)$ is clean for $F_j'$.%
\COMMENT{This needs more detail}
Note that for different $1$-factors $F_j'$, the set of clean clusters will usually be different.

It turns out that we actually need a stronger property than the one described above, namely we need that 
($\star$) below holds.
(This will enable us to ensure that, in the digraphs $G_i$ that we consider later, only a few clusters will contain vertices sending or receiving an edge from the exceptional set and these will be sufficiently far apart.)
For this, we use our refinement $\mathcal{P}_p(t)$ of each $s$-cluster into
$p$ subclusters and unwind the cycles in the above $1$-factorisation again.

For every $V \in \mathcal{P}_s(t)$, let $V^1, \ldots , V^p$ be the $p$-clusters contained in $V$. Note that the collection of all $V^k$ over all $s$-clusters $V$ contained in an adapted primary cluster $W$ are precisely the $p$-clusters refining $W$. For each cycle $D=V_1 \ldots V_K$ in $F'_j$ 
(where this is the same ordering we specified above)
let $D'$ be the $p$-fold blow-up of $D$ whose vertex classes are the $p$-clusters $V_\ell^k$ contained in $V_1, \ldots, V_K$.
Apply Lemma~\ref{unwind} to $D'$ to find a $(p-1)$-unwinding $D_1 , \ldots , D_{p-1}$ of $D'$  with $V_\ell^k$ playing the role of $x_\ell^k$.
We have the following property:

\begin{itemize}
\item[($\star$)] For each $1 \leq d \leq p-1$ and $1 \leq k \leq p$, the $p$-clusters $V_1^k, \ldots, V_{K-2}^k$ have pairwise distance at least $p$ on $D_d$.  
\end{itemize}

Note that ($\star$) holds only for the $p$-clusters in $V_1, \ldots, V_{K-2}$ and not necessarily $V_{K-1}$ or $V_K$. This is the reason for introducing the clean $s$-clusters: recall that $V_{K-1}$ and $V_K$ are clean. This will mean that we will never introduce any edges between their vertices and the exceptional set (see (b) in Section~\ref{redclusters}).

Moreover, for all $1 \leq \ell \leq K$, we have
from Lemma~\ref{unwind} 
that $V_{\ell}^1, \ldots, V_{\ell}^p$ lie on the same cycle $D_d$.
Additionally, their successors on $D_d$ all belong to a single adapted primary cluster $V_{\ell+1}$.
Also $F'_j$ corresponds to a set of $(p-1)$ edge-disjoint $1$-factors $F_i$ (with $(j-1)(p-1)+1 \le i \le j(p-1)$) of $R_p(\beta)$.
We say that such an $F_i$ has \emph{intermediate factor type} $j$
and \emph{original factor type} $t$ where $t$ is the original type of $F_j'$.
For each $i$, write $V_{0,i}^{\rm spec} := \tilde{V}_{0,t}^{\rm spec}$ for the special exceptional set associated with $F_i$,
where $t$ is the original factor type of $F_i$.
Note that for every $i$, every vertex in $G$ is contained either in a $p$-cluster of $F_i$, in $V_0$ or in $V_{0,i}^{\rm spec}$. 
Note also that for each adapted primary cluster $W$ of $\tilde{R}$, there are $sp$ clusters of $F_i$ which are subclusters of $W$. 
Also
let 
\begin{equation} \label{rp}
r_p := (p-1) r_s.
\end{equation}
Then altogether this gives us a set of $r_p$ edge-disjoint $1$-factors
$F_1 , \ldots , F_{r_p}$ of $R_p(\beta)$. 
Note that for each $t$, there are exactly $(s-1)(p-1)$ of the $F_i$ which have original factor type $t$.
Furthermore,
\begin{equation} \label{eq:boundrp}
r_p \stackrel{(\ref{eq:defr})}{\le} \frac{\tilde{\alpha}sp \tilde{L} }{\beta} \ \ \mbox{ and  so } \ \ r_p \stackrel{(\ref{L})}{\leq} \dfrac{\tilde{\alpha}L_p}{\beta}; \ \ \mbox{ also } \ \ 1/m \le 1/m_p \stackrel{(\ref{boundL})}{\ll} 1/r_p.
\end{equation}

For each edge $E \in E(F_i)$ from $A$ to $B$, let $G_i(E) := G^*(E)$, where $G^*(E)$ was defined just after~(\ref{Gstar}). 
Let $G_i$ denote the union of the digraphs $G_i(E)$ over all $E$ with $E \in E(F_i)$ and call it the $i$th \emph{slice}.
Clearly $G_1 , \ldots , G_{r_p}$ are edge-disjoint.
Given $E \in E(F_i)$ for some $1 \leq i \leq r_p$, let $\tilde{E} \in E(\tilde{F}_t)$ be the unique edge such that $E$ is in the blow-up of $\tilde{E}$, where $F_i$ has original factor type $t$.
As noted directly before (\ref{boundL}),
$G^*(\tilde{E})$ is $(\eps , \beta_1)$-superregular and hence $G_i(E)=G^*(E)$ is $(\eps^{\prime} , \beta_1)$-superregular by Lemma~\ref{equipartition}(i).

Recall that since $V_{0,i}^{\rm spec}$ is different for each $i$, the vertex set of a $p$-cluster will be slightly different in $G_i$ and $G_{i'}$ when $F_i$ and $F_{i'}$ have different original factor types. 
Note that if $U$ is a base $2p$-cluster (of size $m_p/2$), and $U_{(t)}$ is the associated $2p$-$(t)$-cluster, then
\begin{equation} \label{largeint}
|U \cap U_{(t)}| \geq (1-\eps')m_p/2.
\end{equation}
as the corresponding partitions are $\eps'$-close.
(On the other hand, $\bigcap_{1 \leq t \leq \tilde{r}}{U_{(t)}}$ may be empty.)
The same statements hold for $s$- and $p$-clusters. 
When adding edges incident to exceptional vertices in Section \ref{exceptional} we need to be careful about distinguishing between base $2p$-clusters and the $2p$-$(t)$-clusters which are actually contained in the clusters of our slices.


\subsection{Red clusters and edges} \label{redclusters}

The aim of this Section is to lay some groundwork for Sections \ref{connect}, \ref{exceptional} and \ref{exceptional2}
by specifying the properties that the edges between the exceptional vertices and the rest of $V(G)$ need to satisfy. 
In Section \ref{connect} our aim is to remove a bounded number of \emph{bridge vertices} $V_{0,i}^{\rm bridge}$ from each $G_i \setminus (V_0 \cup V_{0,i}^{\rm spec})$ and 
change their neighbourhoods in such a way that the blown-up cycles in $G_i$ are connected via bridge vertices. Some additional vertices will also be removed and added to $V_{0,i}^{\rm spec}$ to keep the cluster sizes equal.
In Section~\ref{exceptional} we will add edges to $G_i$ which are incident to $V_0$.
In Section~\ref{exceptional2} we will do the same for $V_{0,i}^{\rm spec}$.
We will then let
\[
V_{0,i} := V_0 \cup V_{0,i}^{\rm spec} \cup V_{0,i}^{\rm bridge}.
\]
$V_{0,i}$ is then the exceptional set for the slice $G_i$: each vertex will lie either in a cluster of $G_i$ or in $V_{0,i}$. Any edge incident to a vertex in $V_{0,i}$ and any vertex in a cluster of $G_i$ incident to such an edge will be called $i$-\emph{red} (or \emph{red} if this is unambiguous).
Roughly speaking, when adding red edges to $G_i$, we will need to ensure that $G_i$ is a spanning almost-regular digraph, 
that no non-exceptional vertex has large $i$-red degree and that the set of red vertices is small and well-distributed.

To achieve this, for each $i$ we will only add red edges incident to some carefully selected $2p$-clusters
and then apply property ($\star$).
More precisely, for fixed $i$, let $j=j(i)$ and $t=t(i)$ respectively be the intermediate and original factor types of $G_i$.
For each $s$-cluster $U$ of $\cP_s(t)$, let $U_1,\dots ,U_p$ denote the $p$-clusters of $\cP_{p}(t)$ which are subclusters of $U$. For $1 \leq \ell \leq p$, let $U(\ell)$ and $U(\ell+p)$ be the $2p$-clusters contained in $U_\ell$.
In $G_i$, we will add red edges between $V_{0,i}$ and $U(k)$ only if 
\begin{itemize}
\item[(a)] $t \equiv k \mod 2p$ and 
\item[(b)] $U$ is not a clean cluster in $F'_{j}$.
\end{itemize}
We call such a $2p$-cluster $U(k)$ \emph{$i$-red} and we call a $p$-cluster $i$-\emph{red} if it contains an $i$-red $2p$-cluster (or simply \emph{red} if this is unambiguous).
Note that (a) implies that every $s$-cluster $U$ which is not clean contains exactly one red $2p$-cluster (and thus exactly one red $p$-cluster).
Moreover, recall that any adapted primary cluster contains exactly one clean $s$-cluster; thus it contains exactly $s-1$ red $p$-clusters.

All red vertices will be contained in red $2p$-clusters, but note that we do not require every red cluster to contain a red vertex.
Let 
\begin{equation} \label{eq:kappa}
\kappa := (1 - \gamma)\beta_1 m_p.
\end{equation}
We would like to find exactly $\kappa$ edge-disjoint Hamilton cycles in each of the $G_i$.
For this, we will first need to add edges so that $G_i$ satisfies the following for all $i$ with $1 \leq i \leq r_p$:

\medskip

\begin{enumerate}
\item[(Red0)] There exists a sequence $D_1x_1D_2x_2\dots x_{\ell-1}D_\ell x_\ell D_1$
with the following properties:
\begin{itemize}
\item Each $D_j$ is a cycle of $F_i$ and every cycle of $F_i$ appears at least once in the sequence;
\item $V_{0,i}^{\rm bridge}:=\{x_1,\dots,x_\ell\}$ and each $x_j$ has exactly $\kappa$ outneighbours in $D_{j+1}$ and exactly $\kappa$ inneighbours in $D_{j}$.
\end{itemize}
\item[(Red1)] $d^\pm_{G_i} (x) = \kappa$  for all $x \in V_{0,i}$;
\item[(Red2)] $V_{0,i}$ is an independent set in $G_i$;
\item[(Red3)] $| N^\pm_{G_i}(y) \cap V_{0,i} | \leq \sqrt{\xi} \beta_1 m_p$ for all $y \notin V_{0,i}$;
\item[(Red4)] For every red $p$-cluster $V$, all red edges of $G_i$ are incident to a single $2p$-cluster contained in $V$. In particular, $| N^\pm_{G_i}(V_{0,i}) \cap V | \leq m_p /2$ for all clusters $V \in R_p$;
\item[(Red5)] If $V,V'$ are red $p$-clusters on a cycle $C$ of $F_i$, then they have distance at least $p$ on $C$;
\item[(Red6)] If a $p$-cluster $V$ contains the final vertex of a red edge in $G_i$, then it contains no initial vertices of red edges in $G_i$, and vice versa;
\item[(Red7)]  $G_1 , \ldots , G_{r_p}$ are edge-disjoint and $G_i(E)$ is $(2\eps',\beta_1)$-superregular for all $E \in E(F_i)$.
\end{enumerate}

\medskip

Roughly speaking, given a 1-factor $f$ of $G_i$, (Red0) and (Red1) will ensure that $f$ has a path between any pair of successive cycles $D_j$ of $F_i$. (Red2)--(Red6) imply that the red edges are well-distributed. This will be crucial when applying Lemma~\ref{trick} to transform $f$ into a Hamilton cycle in Section~\ref{mergeH}.

Suppose that $V$ is a red $p$-cluster. Since only one of the two $2p$-clusters contained in $V$ is red,
it follows that (Red4) will automatically be satisfied for $V$ if we add red edges according to (a) and (b).
It is easy to see that this will also satisfy (Red5). Indeed, recall that every non-clean $s$-cluster contains exactly one red $p$-cluster. Moreover, if $U_\ell$ and $U_{\ell'}$ are non-clean $s$-clusters then the $p$-cluster $U_\ell^k$ is red if and only if $U_{\ell'}^k$ is red.
Suppose a cycle $C$ in $F_i$ was obtained by unwinding the blow-up of $C'$ in $F_j'$; then the last two $s$-clusters in $C'$ (using the same ordering as in Section \ref{unwinding}) are clean and hence contain no red $p$-clusters by (b). So ($\star$) implies that the red clusters on $C$ will have distance at least $p$ apart.%
\COMMENT{Add detail on how the red clusters correspond to vertices in the statement of Lemma~\ref{unwind}}

Let $F = r_p/4p$. For each $k$ with $1 \le k \le 2p$, 
note that the number of all digraphs $G_i$ whose original type $t$ satisfies (a)
is $2F$. For each $k$, consider an ordering of all these graphs.

We will now fix which of the (red) $2p$-clusters will receive red edges and which of them will send out red edges. 
For each $i=1,\dots,r_p$, let $t=t(i)$ be the original type of $G_i$ and let $k$ with $1 \le k \le 2p$ satisfy (a).
Suppose that $G_i$ is the $f$th graph with original type $t$ (so $1 \le f \le 2F$). 
For each adapted primary cluster $W \in \mathcal{P}(t)$, let $W_1,\dots,W_s$ denote the set of all $s$-clusters contained in $W$.
Recall that exactly one of the $W_j$ is clean.
Now choose a set $S_W^+$ of $s$-clusters from  $W_1,\dots,W_{s/2}$ 
so that none of the $s$-clusters in $S_W^+$ is clean and so that  $|S_W^+|=s/2-1$ (recall that $s$ is even).
Let $I_W^+$ denote the set of indices of the $s$-clusters in $S_W^+$.%
\COMMENT{the previous definition via `Removing clusters' might give the impression that we through them into the exceptional set}
Similarly choose $S_W^-$ from $W_{s/2+1},\dots,W_{s}$ with $|S_W^-|=s/2-1$ which avoids the clean cluster and 
let $I_W^-$ be the corresponding set of indices.

For each $s$-cluster $W_\ell$ contained in $W$, let $W_\ell(k)$ denote the $k$th $2p$-cluster contained in $W_\ell$, where $k$ is defined as in the previous paragraph. 
We call  $W_\ell(k)$ \emph{in-red} (for $i$) if 
\begin{itemize}
\item $1 \leq f \leq F$ and $\ell \in I_W^+$, or $F < f \leq 2F$ and $\ell \in I_W^-$.
\end{itemize}
We call $W_\ell(k)$ \emph{out-red} (for $i$) if 
\begin{itemize}
\item $1 \leq f \leq F$ and $\ell \in I_W^-$, or $F < f \leq 2F$ and $\ell \in I_W^+$.
\end{itemize}
If a $p$-cluster $V$ contains an in-red $2p$-cluster, we say that $V$ is \emph{in-red} (and similarly for \emph{out-red} clusters).
So the number of in-red $p$-clusters in each adapted primary cluster $W$ is exactly
\begin{equation} \label{inred}
|I_W^-| = \dfrac{s}{2}-1
\end{equation} 
and similarly for out-red clusters.


\subsection{Connecting blown-up cycles} \label{connect}

In the final section of the proof we will successively find $1$-factors in each $G_i$ and then turn each of these into a Hamilton cycle. 
As mentioned earlier, (Red0) will be used to ensure that each 1-factor $f$ of $G_i$ has a path connecting any pair of consecutive cycles of $F_i$,
which will make it possible to merge the cycles of $f$ into a Hamilton cycle. In this section we will modify $G_i$ so that (Red0) holds. 

We will join cycles by choosing \emph{bridge vertices} $x_{i,j}$ in $V(G)\setminus (V_0 \cup V_{0,i}^{\rm spec})$ whose neighbourhoods will be chosen from the sparse digraphs $H_0^\pm$ defined in Section~\ref{sec:H}. In what follows, we write $A_j^-$ for the predecessor of the $p$-cluster $A_j$ in $F_i$. 

\tikzstyle{every node}=[circle, draw, fill=white,
                        inner sep=0pt, minimum width=9.5pt]
\tikzstyle{every line}=[width=10mm]
\begin{center}
\scalefont{0.75}
{
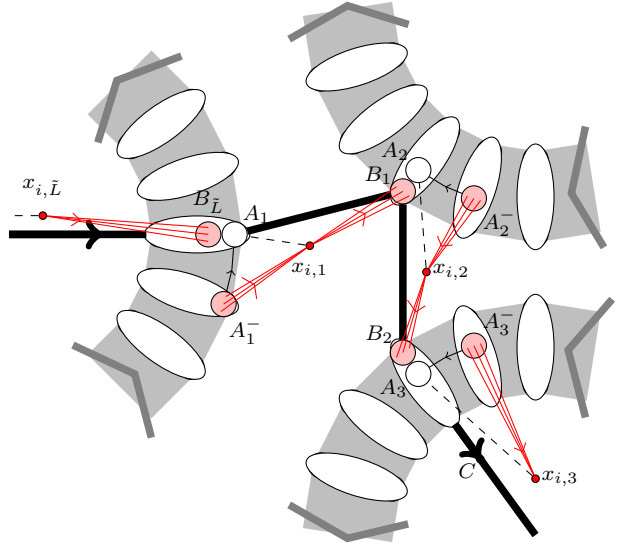
\begin{figure}
\centering
\begin{tikzpicture}[scale=1]
\begin{scope}[xshift=0cm,yshift=1cm]
    \draw[line width=12mm,color=gray!50]
     \foreach \x in {-54,-36,-18,0,18,36}
    {
        (\x:2.5) -- (\x+18:2.5)
    };

           \draw
 \foreach \x in {-36,-18,0,18,36}
    {
        (\x:3) node (\x four) {}
        (\x:2) node (\x four2) {}
        (\x:2.65) node (\x four3) {}
    };
\draw[fill=white,rotate=-36] (0:2.5) ellipse (20pt and 7pt);
\draw[fill=white,rotate=-18] (0:2.5) ellipse (20pt and 7pt);
\draw[label=left:{$\tilde{A}_1$},fill=white,rotate=0] (0:2.5) ellipse (20pt and 7pt);
\draw[fill=white,rotate=18] (0:2.5) ellipse (20pt and 7pt);
\draw[fill=white,rotate=36] (0:2.5) ellipse (20pt and 7pt);

\draw[->] (-18:3) -- (-9:3);
\draw (-9:3) -- (0:3);

\node at (0four) [label=40:{$A_1$}] {};
\node at (0:2.65) [fill=red!25,label={$B_{\tilde{L}}$}] {};
\node at (-18four) [fill=red!25,label=300:{$A_1^-$}] {};

\draw[line width=1mm,color=gray] (-60:1.7) -- (-48:2.5) -- (-55:3.3);
\draw[line width=1mm,color=gray] (46:1.7) -- (55:2.5) -- (46:3.3);
\end{scope}
\begin{scope}[yshift=4cm,xshift=7cm,]
    \draw[line width=12mm,color=gray!50]
     \foreach \x in {180,198,216,234,252,270}
    {
        (\x:2.5) -- (\x+18:2.5)
    };
           \draw
 \foreach \x in {198,216,234,252,270}
    {
        (\x:3) node (\x five) {}
        (\x:2) node (\x five2) {}
        (\x:2.65) node (\x five3) {}
    };

\draw[fill=white,rotate=198] (0:2.5) ellipse (20pt and 7pt);
\draw[fill=white,rotate=216] (0:2.5) ellipse (20pt and 7pt);
\draw[fill=white,rotate=234] (0:2.5) ellipse (20pt and 7pt);
\draw[fill=white,rotate=252] (0:2.5) ellipse (20pt and 7pt);
\draw[fill=white,rotate=270] (0:2.5) ellipse (20pt and 7pt);

\draw[->] (252:2.65) -- (243:2.65);
\draw (243:2.65) -- (234:2.65);

\node at (252:2.65) [fill=red!25,label=320:{$A_2^-$}] {};
\node at (234five) [fill=red!25,label=160:{$B_1$}] {};
\node at (234:2.65) [label=150:{$A_2$}] {};

\draw[line width=1mm,color=gray] (187:1.7) -- (179:2.5) -- (187:3.3);
\draw[line width=1mm,color=gray] (293:1.7) -- (281:2.5) -- (286:3.3);
\end{scope}
\begin{scope}[yshift=-3cm,xshift=7cm,]
\node at (126:1.25) [minimum width=1mm,label=210:{$C$}] {};
\draw[<-,line width=1mm,color=black] (126:1.25) -- (126:2.5);
    \draw[line width=12mm,color=gray!50]
     \foreach \x in {72,90,108,126,144,162}
    {
        (\x:2.5) -- (\x+18:2.5)
    };
\draw
 \foreach \x in {90,108,126,144,162}
    {
        (\x:3) node  (\x six) {}
        (\x:2) node (\x six2) {}
        (\x:2.65) node (\x six3) {}
    };

\draw[fill=white,rotate=90] (0:2.5) ellipse (20pt and 7pt);
\draw[fill=white,rotate=108] (0:2.5) ellipse (20pt and 7pt);
\draw[fill=white,rotate=126] (0:2.5) ellipse (20pt and 7pt);
\draw[fill=white,rotate=144] (0:2.5) ellipse (20pt and 7pt);
\draw[fill=white,rotate=162] (0:2.5) ellipse (20pt and 7pt);

\draw[->] (108:2.65) -- (117:2.65);
\draw (117:2.65) -- (126:2.65);

\node at (126six) [fill=red!25,label=150:{$B_2$}] {};
\node at (126:2.65) [label=190:{$A_3$}] {};
\node at (108:2.65) [fill=red!25,label=40:{$A_3^-$}] {};

\draw[line width=1mm,color=gray] (67:1.7) -- (80:2.5) -- (72:3.3);
\draw[line width=1mm,color=gray] (175:1.7) -- (181:2.5) -- (173:3.3);
\end{scope}
\draw[line width=1mm,color=black] (0,1) -- (0four2);
\draw[line width=1mm,color=black] (0four) -- (234five);
\draw[line width=1mm,color=black] (234five) -- (126six);
\draw[line width=1mm,color=black] (126six2) -- (7,-3);
\draw[->,line width=1mm,color=black] (0,1) -- (1.25,1);

\node at (4,0.85) [fill=red,draw,circle,minimum width=1mm,
label=below:{$x_{i,1}$}] (x1){};
\node at (5.55,0.5) [fill=red,draw,circle,minimum width=1mm,
label=right:{$x_{i,2}$}] (x2){};
\node at (0.45,1.25) [fill=red,draw,circle,minimum width=1mm,
label=above:{$x_{i,\tilde{L}}$}] (xL){};
\node at (7,-2.25) [fill=red,draw,circle,minimum width=1mm,
label=right:{$x_{i,3}$}] (x3){};
\draw[style=dashed] (x1) -- (0four);
\draw[style=dashed] (x2) -- (234five3);
\draw[style=dashed] (x3) -- (126six3);
\draw[style=dashed] (xL) -- (0,1.25);

\begin{scope}[xshift=0cm,yshift=1cm]
\draw[color=red] (x1) -- (-16:3);
\draw[color=red] (x1) -- (-18:3);
\draw[color=red] (x1) -- (-20:3);

\draw[color=red] (xL) -- (-2:2.65);
\draw[color=red] (xL) -- (0:2.65);
\draw[color=red] (xL) -- (2:2.65);

\end{scope}

\begin{scope}[yshift=-3cm,xshift=7cm]
\draw[color=red] (x3) -- (106:2.65);
\draw[color=red] (x3) -- (108:2.65);
\draw[color=red] (x3) -- (110:2.65);
\end{scope}

\begin{scope}[shift={(x1)}]
\draw[color=red] (205:1) -- (215:0.9) -- (225:1);
\draw[color=red] (20:0.75) -- (30:0.85) -- (40:0.75);
\end{scope}

\begin{scope}[xshift=7cm,yshift=4cm]
\draw[color=red] (x2) -- (250:2.65);
\draw[color=red] (x2) -- (252:2.65);
\draw[color=red] (x2) -- (254:2.65);

\begin{scope}[shift={(x2)}]
\draw[color=red] (46:0.5) -- (60:0.4) -- (73:0.5);
\draw[color=red] (236:0.45) -- (255:0.55) -- (268:0.45);
\end{scope}

\begin{scope}[shift={(x3)}]
\draw[color=red] (105:0.5) -- (115:0.4) -- (125:0.5);
\end{scope}

\begin{scope}[shift={(xL)}]
\draw[color=red] (5:0.4) -- (-7:0.5) -- (-20:0.4);
\end{scope}


\draw[color=red] (x1) -- (232:3);
\draw[color=red] (x1) -- (234:3);
\draw[color=red] (x1) -- (236:3);

\end{scope}

\begin{scope}[xshift=7cm,yshift=-3cm]
\draw[color=red] (x2) -- (124:3);
\draw[color=red] (x2) -- (126:3);
\draw[color=red] (x2) -- (128:3);
\end{scope}
\end{tikzpicture}\quad
\caption{Bridge vertices $x_{i,1} , x_{i,2}, x_{i,3}$ chosen from $p$-clusters $A_1, A_2, A_3$ respectively.}\label{fig:bridge}
\end{figure}
}
\end{center}
 
\begin{claim} \label{connect1} 
There is a sequence $A_1B_1A_2B_2 \ldots A_{\tilde{L}}B_{\tilde{L}}A_1$ of $p$-clusters in $R_p$ such that, for each $1 \leq j,j' \leq \tilde{L}$ the following hold:
\begin{itemize}
\item[(i)] Let  $\tilde{A}_j$ and $\tilde{B}_j$ be the adapted primary clusters containing $A_j$ and $B_j$ respectively.
Then there is an edge from $\tilde{A}_j$ to $\tilde{B}_j$ in $\tilde{R}$.
\item[(ii)] $A^-_j$ is out-red and $B_j$ is in-red;
\item[(iii)] $B_j$ and $A_{j+1}$ lie in the same adapted primary cluster (where $A_{\tilde{L}+1}:=A_1$).
\item[(iv)] Every adapted cluster contains exactly one $A_j$ and exactly one $B_{j^\prime}$.
\item[(v)] All the $A_j$ and $B_{j'}$ are distinct.
\end{itemize}
\end{claim}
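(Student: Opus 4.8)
The plan is to build the sequence in two stages: first pin down the underlying adapted primary clusters by taking a Hamilton cycle of $\tilde{R}$, and then, going through these clusters one at a time, choose inside each of them the two $p$-clusters that the sequence needs, drawing on the red $p$-clusters available there. Throughout I would fix the slice $G_i$ (so that ``in-red'' and ``out-red'' always refer to this $i$) and write $t:=t(i)$ for its original factor type. For the first stage, Lemma~\ref{inherit} says that $\tilde{R}'$ is a robust $(\nu/4,3\tau)$-outexpander, and by~(\ref{eqreduced}) (using $\tilde{\alpha}\ge\alpha$ and $d'\ll\alpha$) we have $\delta^0(\tilde{R}')\ge(\tilde{\alpha}-3d')\tilde{L}\ge\alpha\tilde{L}/2$; since the hierarchy~(\ref{hierarchy}) gives $1/\tilde{L}\le 1/M'\ll\nu\le\tau\ll\alpha$, Theorem~\ref{1hc} provides a Hamilton cycle $\tilde{W}_1\tilde{W}_2\dots\tilde{W}_{\tilde{L}}\tilde{W}_1$ of $\tilde{R}'$, hence of $\tilde{R}$, with all indices read modulo $\tilde{L}$ below. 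I will choose $A_j$ to lie in $\tilde{W}_j$ and $B_j$ to lie in $\tilde{W}_{j+1}$, so $\tilde{A}_j=\tilde{W}_j$ and $\tilde{B}_j=\tilde{W}_{j+1}$; then (i) is immediate from the Hamilton cycle, and (iii) holds because $A_{j+1}$ and $B_j$ will both lie in $\tilde{W}_{j+1}$.

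The structural input needed for the second stage is the way the $1$-factor $F_i$ sits over $\tilde{F}_t$. Recall from Section~\ref{unwinding} that $F_i$ comes from unwinding the $p$-fold blow-ups of the cycles of $F'_{j(i)}$, which in turn comes from unwinding the $s$-fold blow-ups of the cycles of $\tilde{F}_t$, and that each unwound cycle is a Hamilton cycle of the blow-up concerned and so traverses its blown-up ``levels'' cyclically. I would use this to argue that, for any adapted primary cluster $Y$ with $\tilde{F}_t$-successor $Y^+$, the $F_i$-successor of every $p$-cluster of $Y$ is a $p$-cluster of $Y^+$, and that $X\mapsto X^+$ restricts to a bijection from the $sp$ $p$-clusters of $Y$ onto the $sp$ $p$-clusters of $Y^+$; equivalently, the $F_i$-predecessor $A^-$ of any $p$-cluster $A\subseteq\tilde{W}_a$ lies in the $\tilde{F}_t$-predecessor $\tilde{W}_a^-$ of $\tilde{W}_a$, and $A\mapsto A^-$ is a bijection between their $p$-clusters. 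Given this, I would fix $a$ and count: by~(\ref{inred}), $\tilde{W}_a^-$ contains exactly $s/2-1$ out-red $p$-clusters, so the set $\mathcal{A}_a$ of $p$-clusters $A\subseteq\tilde{W}_a$ with $A^-$ out-red is exactly the image of those under $X\mapsto X^+$ and hence has size $s/2-1$; likewise~(\ref{inred}) gives that the set $\mathcal{B}_a$ of in-red $p$-clusters of $\tilde{W}_a$ has size $s/2-1$. Since $s$ is even and $1/s\ll\gamma$, we have $s/2-1\ge2$.

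Finally, for each $a$ I would pick $B_{a-1}\in\mathcal{B}_a$ arbitrarily and then $A_a\in\mathcal{A}_a\setminus\{B_{a-1}\}$, which is possible as $|\mathcal{A}_a|\ge2$; this determines $A_1,\dots,A_{\tilde{L}}$ and $B_1,\dots,B_{\tilde{L}}$ (with $B_0:=B_{\tilde{L}}$), and hence the sequence $A_1B_1A_2B_2\dots A_{\tilde{L}}B_{\tilde{L}}A_1$. Property (ii) then holds because $A_j\in\mathcal{A}_j$ and $B_j\in\mathcal{B}_{j+1}$. Since the $\tilde{W}_a$ are pairwise distinct, each adapted primary cluster equals $\tilde{W}_a=\tilde{A}_a=\tilde{B}_{a-1}$ for a unique $a$ and contains precisely $A_a$ and $B_{a-1}$ among the clusters of the sequence, which gives (iv); and as clusters of the sequence lying in distinct $\tilde{W}_a$'s are automatically distinct, the extra fact $A_a\ne B_{a-1}$ yields (v). The step I expect to be the real obstacle is the structural claim of the second paragraph: making it precise that the two successive unwindings make the cyclic structure of $F_i$ project onto that of $\tilde{F}_t$, so that the $F_i$-predecessors of all $p$-clusters of a single adapted primary cluster concentrate in one adapted primary cluster, where~(\ref{inred}) then guarantees enough out-red clusters. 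Once that is pinned down, the remaining verification is routine bookkeeping with the counts already in hand.
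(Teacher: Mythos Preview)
Your proposal is correct and follows essentially the same route as the paper: obtain a Hamilton cycle in $\tilde{R}$ via Theorem~\ref{1hc}, place $A_j$ and $B_j$ in consecutive primary clusters along it, and then choose them locally using the supply of $s/2-1$ in-red and out-red $p$-clusters per primary cluster from~(\ref{inred}). The structural point you flag as the main obstacle---that the $F_i$-predecessors of all $p$-clusters of a given adapted primary cluster lie together in its $\tilde{F}_t$-predecessor---is exactly what the paper invokes as well (it appeals to the observation after~($\star$) that successors in $F_i$ of the $p$-clusters of one primary cluster all lie in a single primary cluster), so your instinct about where the content lies is right, and your argument is the intended one.
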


To prove the claim, observe that, by Lemma~\ref{inherit}, $\tilde{R}$ is a robust $(\nu/4 , 3\tau)$-outexpander, so Theorem~\ref{1hc} implies that $\tilde{R}$ contains a 
Hamilton cycle~$C=\tilde{A}_1\dots \tilde{A}_{\tilde{L}}$.
We will choose $A_j$ in $\tilde{A}_j$ and $B_j$ in $\tilde{A}_{j+1}$.
This will automatically satisfy (i), (iii) and (iv) and ensures that they will be distinct, except possibly $A_j = B_{j-1}$. 
Now recall that, by (\ref{inred}), each adapted primary cluster contains exactly $s/2-1$ in-red and $s/2-1$ out-red $p$-clusters. Moreover, as noted after ($\star$), they all lie on the same cycle in $F_i$
and $p$-clusters directly preceding those in $\tilde{A}_j$ on $F_i$ all lie in the same adapted primary cluster, which we call $\tilde{A}_j^-$. 
Thus we can always choose an in-red $B_{j-1}$ in $\tilde{A}_j$ and an out-red $A_{j}^- \in \tilde{A}_j^-$ whose successor $A_j$ on $F_i$ lies in $\tilde{A}_j$, proving (ii). Moreover, we have $s/2-1 > 1$ choices for ${A}_j$ so we may assume that $A_j$ and $B_{j-1}$ are distinct. This proves (v) and thus the claim. 

\medskip

\noindent We will choose the bridge vertices in the sets $A_j$. The next claim guarantees many candidates for these bridge vertices whose neighbourhoods have the required properties.

\begin{claim} \label{connect2} 
For each $i$ with $1 \leq i \leq r_p$, whenever a $p$-cluster $A_j \in V(F_i)$ is joined by an edge in $R_p$ to a $p$-cluster $B_j \in V(F_i)$ the following holds.%
\COMMENT{we need to be more careful about referring to $p$-clusters, as they are slightly different for each slice}
Let $(A_j^-)'$ and $B_j'$ be $2p$-clusters contained in $A_j^-$ and $B_j$ respectively.
Then $A_j$ contains at least $m_p/2$ vertices $x$ such that both $\vert N_{H_0^-}^-(x) \cap (A_j^-)' \vert > \kappa$%
\COMMENT{we do need to know this inneighbourhood lies in $A_j^-$} 
and $\vert N_{H_0^+}^+(x) \cap B_j' \vert > \kappa$.
\end{claim}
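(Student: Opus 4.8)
\medskip\noindent\textit{Proof plan.}
The plan is to work first at the level of the base primary clusters, apply property (H0) there, and then push the resulting neighbourhood bounds down to the $2p$-clusters occurring in $G_i$ using the uniform refinement and $\eps'$-closeness. Let $t$ be the original factor type of $F_i$, so that $A_j$, $B_j$, $A_j^-$ are $p$-$(t)$-clusters, and let $\tilde A$, $\tilde B$, $\tilde A^-$ be the base primary clusters containing $A_j$, $B_j$, $A_j^-$ respectively. Since every adapted $(t)$-cluster is a subset of a base primary cluster, we have $A_j\subseteq\tilde A$, $(A_j^-)'\subseteq A_j^-\subseteq\tilde A^-$ and $B_j'\subseteq B_j\subseteq\tilde B$. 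Moreover $A_j^-A_j\in E(F_i)\subseteq E(sp\otimes\tilde R(\beta))$ and $A_jB_j\in E(R_p)=E(sp\otimes\tilde R)$, so both $\tilde A^-\tilde A$ and $\tilde A\tilde B$ are edges of $\tilde R$.

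The key steps are then as follows. First, I would apply (H0) to the edge $\tilde A\tilde B$ to obtain a set $X\subseteq\tilde A$ of size at least $(1-\eps')\tilde m$ with $|N^+_{H_0^+}(x)\cap\tilde B|\ge\gamma d\tilde m/2$ for all $x\in X$, and apply (H0) to $\tilde A^-\tilde A$ (so that $\tilde A$ plays the role of the head cluster) to obtain $Y\subseteq\tilde A$ of size at least $(1-\eps')\tilde m$ with $|N^-_{H_0^-}(y)\cap\tilde A^-|\ge\gamma d\tilde m/2$ for all $y\in Y$; the candidate bridge vertices will be those in $A_j\cap X\cap Y$. Second, recall that $\cP_{2p}'$ is a $5\tilde\eps$-uniform $2sp$-refinement of $\tilde{\cP}$ with respect to $\mathcal G\supseteq\{H_0^+,H_0^-\}$, and that $\gamma d\tilde m/2\ge 5\tilde\eps\tilde m$ by~(\ref{hierarchy}); hence (URef) gives $|N^+_{H_0^+}(x)\cap\hat B|\ge(1-5\tilde\eps)\gamma d\tilde m/(4sp)\ge\gamma d m_p/5$ for every $x\in X$ and every base $2p$-cluster $\hat B\subseteq\tilde B$ (using $\tilde m\ge m=sp\,m_p$ from~(\ref{boundL})), and symmetrically for $Y$, $H_0^-$ and the base $2p$-clusters of $\tilde A^-$. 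Third, I would pass from base $2p$-clusters to the adapted ones: $B_j'$ is associated with a unique base $2p$-cluster $\hat B\subseteq\tilde B$ with $|\hat B\setminus B_j'|\le\eps'm_p/2$ by~(\ref{largeint}), and likewise $(A_j^-)'$ with some $\hat A\subseteq\tilde A^-$, so discarding these few vertices and using $\eps'\ll\gamma d$ yields $|N^+_{H_0^+}(x)\cap B_j'|\ge\gamma d m_p/6$ for $x\in X$ and $|N^-_{H_0^-}(y)\cap(A_j^-)'|\ge\gamma d m_p/6$ for $y\in Y$. Since $\kappa=(1-\gamma)\beta_1 m_p\le\beta m_p<\gamma d m_p/6$ by~(\ref{eq:kappa}), (\ref{eq:beta_1}) and~(\ref{hierarchy}), every vertex in $A_j\cap X\cap Y$ has both required properties.

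It then remains to count: as $A_j\subseteq\tilde A$ with $|A_j|=m_p$, we get $|A_j\cap X\cap Y|\ge m_p-|\tilde A\setminus X|-|\tilde A\setminus Y|\ge m_p-2\eps'\tilde m\ge m_p-4\eps'sp\,m_p\ge m_p/2$, using $\tilde m\le 2sp\,m_p$ (from~(\ref{boundL})) and $\eps'sp\le 1/8$ (from~(\ref{hierarchy})). I expect this final count to be the delicate point: (H0) only bounds the \emph{total} number of exceptional vertices in the whole base primary cluster $\tilde A$, not within the much smaller $p$-subcluster $A_j$, so one really needs $\eps'$ to be tiny compared with $1/(sp)$ rather than merely compared with $1$; and throughout one must carefully distinguish the \emph{base} $2p$-clusters, to which (H0) and (URef) apply, from the adapted $2p$-$(t)$-clusters $(A_j^-)'$ and $B_j'$ that actually occur in $G_i$, bridging the two via $\eps'$-closeness and~(\ref{largeint}).
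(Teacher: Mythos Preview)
Your proposal is correct and follows essentially the same approach as the paper's proof: apply (H0) at the level of the base primary clusters $\tilde A^-\tilde A$ and $\tilde A\tilde B$, use (URef) for the $5\tilde\eps$-uniform $2sp$-refinement to transfer the neighbourhood lower bounds from $\tilde B$ and $\tilde A^-$ down to the $2p$-cluster level, and then count the surviving vertices inside $A_j$ using $\eps'\ll 1/(sp)$. The paper states this in two lines (obtaining at least $3m_p/4$ good vertices for each of the two conditions separately and hence at least $m_p/2$ for both), while you have additionally made explicit the passage from base $2p$-clusters to adapted $2p$-$(t)$-clusters via~(\ref{largeint}); this extra care is justified, since (URef) with respect to $H_0^\pm$ is only guaranteed for the base refinement $\cP_{2p}'$.
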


\noindent
We say that a vertex $x$ as in Claim~\ref{connect2} is \emph{$(i,j)$-useful}.
To prove the claim, note that (H0)%
\COMMENT{, (\ref{boundL}) to show that $m$ and $\tilde{m}$ are close in size} and (URef) imply that, for at least $3m_p/4$ of the vertices $x \in A_j$, we have
$$
\vert N^+_{H_0^+}(x) \cap B_j' \vert \geq \gamma d m_p/5 \stackrel{(\ref{eq:kappa})}{>}  \kappa.
$$ 
As $F_i$ (and thus $R_p$) contains the edge $A_j^-A_j$, for at least $3m_p/4$ of the vertices $x \in A_j$ we similarly have
$$
\vert N^-_{H_0^-}(x) \cap (A_j^-)' \vert >  \kappa.
$$ 
So at least $m_p/2$ the vertices in $A_j$ satisfy both inequalities, which proves the claim.

\medskip

\noindent 
Now we choose the set $V_{0,i}^{\rm bridge}$ satisfying (Red0).
For each $1 \leq i \leq r_p$, consider the sequence guaranteed by Claim \ref{connect1}
and for each $1 \leq j \leq \tilde{L}$, let $(A_j^-)^{\rm red}$ and $B_j^{\rm red}$ be the unique red $2p$-clusters contained in $A_j^-$ and $B_j$ respectively.
So $(A_j^-)^{\rm red}$ is out-red and $B_j^{\rm red}$ is in-red.
For each $1 \leq j \leq \tilde{L}$, apply Claim~\ref{connect2} to the pair $(A_j,B_j)$ with $(A_j^-)^{\rm red}, B_j^{\rm red}$ playing the roles of $(A_j^-)', B_j'$ respectively, to obtain a vertex $x_{i,j} \in A_j$ which is $(i,j)$-useful and which is distinct from all vertices
chosen so far. Note that the latter is possible since Claim~\ref{connect1}(v) implies that for each $i$, we only choose one vertex from  $A_j$. So altogether, we choose at most $r_p$
vertices from each $A_j$, which is at most $m_p/3$ by 
(\ref{eq:boundrp}). 
In each $G_i$, remove each $x_{i,j}$ from $A_j$ and denote the collection of all $x_{i,j}$ with $1 \leq j \leq \tilde{L}$ by $V_{0,i}^{\rm bridge}$. 
This process is illustrated in Figure \ref{fig:bridge}.

Now, for each $1 \leq i \leq r_p$, there are exactly $sp-1$ of the $p$-clusters in each adapted primary $(t)$-cluster of $G_i$ from which no vertices have been removed in this step, where $t$ is the original type of $F_i$.%
\COMMENT{Because each adapted cluster contains $sp$ $p$-clusters and in $G_i$, we only took a bridge vertex from one of these}
We still need to ensure that $p$-clusters of $G_i$ have equal size, so we choose a further $(sp-1)r_p\tilde{L} \leq \eps n/3$ 
distinct vertices such that exactly one is removed from each untouched $p$-cluster in each $G_i$. 
Each such vertex is moved from its cluster  into $V_{0,i}^{\rm spec}$.
The final inequality in~(\ref{eq:boundrp}) implies that we can assume that each
vertex $x$ is moved into $V_{0,i}^{\rm spec}$ for at most one $1 \leq i \leq r_p$ in this step.%
 \COMMENT{the way this was described in the previous version, this created loads of red vertices, because (as we discussed some time ago) one has to be careful how to connect the further exceptional vertices}
Now adapted primary $(t)$-clusters become \emph{adapted primary $[i]$-clusters} and (adapted) $s$-, $p$- and $2p$-$(t)$-clusters become \emph{(adapted)} $s$-, $p$- and $2p$-$[i]$-\emph{clusters} respectively (or $s$-, $p$-, $2p$-\emph{clusters} if this is unambiguous).
This will not overlap with previous notation as from now on we never refer to $(t)$-clusters and only ever refer to base and $[i]$-clusters. (\ref{largeint}) implies that if $U$ is a base $2p$-cluster and $U_{[i]}$ is the associated $2p$-$[i]$-cluster, then
\begin{equation} \label{largeint2}
| U \cap U_{[i]} | \geq (1-\eps')m_p/2 -1.
\end{equation}
We still refer to the cluster sizes $m, m_s$ and $m_p$ in the same way since each one has only lost at most $sp$ vertices (which does not affect any calculations).
The $2p$-clusters may no longer have exactly the same size, but this also does not affect any of the calculations.
We call $V_{0,i}^{\rm bridge}$ the set of \emph{bridge vertices} and say that every edge incident to a bridge vertex is $i$-red. We now have that
\begin{equation} \label{V0ibound}
|V_{0,i}| \stackrel{(\ref{tildeV0})}{\leq} \eps n.
\end{equation}
Since in $G_i$, we removed exactly one vertex from each $p$-cluster, we still have 
$$
\vert N^+_{H_0^+}(x_{i,j}) \cap B_j^{\rm red} \vert \geq \kappa\ \ \mbox{ and }\ \ |N^-_{H_0^-}(x_{i,j}) \cap (A_j^-)^{\rm red} \vert \ge \kappa.
$$  
Since the $x_{i,j}$ are all distinct,  it follows that for each $x_{i,j}$, we can choose $\kappa$
of these outedges from $H_0^+$ and add them to $G_i$. Similarly, we can choose $\kappa$ of these inedges from $H_0^-$ and add them to $G_i$, whilst also removing every other edge incident to $x_{i,j}$ in $G_i$.%
\COMMENT{(Note that we could have chosen inedges to $x_{i,j}$ from its neighbourhood in $G_i$ but then we could only guarantee half the edges required from the $2p$-cluster $(A_j^-)^{\rm red}$.) }
So (Red1) is satisfied for $V_{0,i}^{\rm bridge}$.

It is now easy to verify (Red0). For each $1 \leq i \leq r_p$, consider the sequence given by Claim \ref{connect1}. Let $D_j$ be the cycle of $F_i$ containing the adapted $p$-cluster $A_j$ for each $1 \leq j \leq \tilde{L}$ and let $x_j := x_{i,j}$ be the bridge vertex which was removed from $A_j$ in $G_i$. Note that each cycle of $F_i$ appears several times in the sequence. 
We claim that $D_1x_1D_2x_2\dots x_{\tilde{L}}D_\ell x_{\tilde{L}} D_1$ is a sequence satisfying (Red0). The first property is immediate from Claim \ref{connect1}(iv). Each $x_j$ has inneighbourhood contained in $A_j^-$ which is in $D_j$ since $A_j$ is, and its outneighbourhood is contained in $B_j$ which lies in the same adapted cluster as $A_{j+1}$ and thus in the cycle $D_{j+1}$. Therefore the second property is also satisfied.

(Red2) follows since the in- and outedges incident to bridge vertices were chosen from edge-disjoint subdigraphs $H_0^-$ and $H_0^+$ respectively.
Furthermore, by Claim~\ref{connect1}(v), any $y \notin V_{0,i}$ is incident to at most one $i$-red edge so (Red3) holds.
In each red $p$-cluster $V$, red edges were only added to the unique red $2p$-cluster $W$ contained in $V$ so (Red4) is satisfied.
(Red5) is satisfied by the comments after the statement of (Red7).
Moreover every out-red $p$-cluster only sends out red edges and every in-red $p$-cluster only receives red edges so (Red6) holds.  
The edge-disjointness in (Red7) is immediate from the construction.
Finally, note that any vertex in $V(G) \setminus V_{0,i}$ lost at most one inneighbour and one outneighbour in $G_i$, so for each edge $E$ of $F_i$, $G_i(E)$ is certainly still $(2\eps',\beta_1)$-superregular. 
Therefore (Red0) and (Red2)--(Red7) are all satisfied. 
Note that (Red1) holds for all vertices in $V_{0,i}^{\rm bridge}$.
The aim of the next two sections is to maintain these properties while also achieving (Red1) for all vertices in $V_{0,i}$.


\subsection{Incorporating the core exceptional set $V_0$} \label{exceptional}
 
Note that so far, $G_i$ contains no edges with initial or final vertex in $V_0 \cup V_{0,i}^{\rm spec}$.
In this section and the next we will add edges incident to these vertices into the $G_i$. 
Recall that we call such edges and any incident vertices $i$-\emph{red} or \emph{red} if this is unambiguous. Throughout both sections we will refer to (a) and (b) in Section \ref{redclusters}.
To achieve (Red1), we consider the core exceptional set $V_0$ and the special exceptional set $V_{0,i}^{\rm spec}$ separately.
In this section we consider the core exceptional set. 
Roughly speaking, the set of edges between $V_0$ and $G_i \setminus V_0$ will consist of a random subdigraph of $G$ induced by $V_0$ and the red $2p$-clusters of $G_i$.
The following claim guarantees the existence of suitable edge-disjoint random subdigraphs.
Recall from Section~\ref{redclusters} that $F=r_p/4p$.

\begin{claim}\label{claim1} Let $X$ be a base $2p$-cluster which is a subcluster of a base primary cluster $W$. 
Then in $G$ we can find $F$ edge-disjoint bipartite graphs $E_1^+(X),\dots,E_F^+(X)$ with all edges oriented from $V_0$ to $X$ so that for all
$1 \le f \le F$ the following hold:
\begin{itemize}
\item[(i)] For all $x \in V_0$ we have $d^+_{E_f^+(X)}(x) \ge \frac{1-\eps}{2sp F}\left(|N^+_G(x)  \cap W| - 5\tilde{\eps}\tilde{m}\right)$.
\item[(ii)] For all $y \in X$ we have $d^-_{E_f^+(X)}(y) < \sqrt{\xi}\beta_1m_p/2$.
\end{itemize}
We can also find $E_1^-(X),\dots,E_F^-(X)$ satisfying analogous properties for the inneighbourhoods.%
\end{claim}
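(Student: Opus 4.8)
The plan is a routine random–sparsification argument, derandomised via Theorem~\ref{derandom}. I would prove the statement for the outneighbourhood graphs $E_1^+(X),\dots,E_F^+(X)$; the $E_f^-(X)$ are obtained in exactly the same way with the roles of initial and final vertices exchanged, using the analogue of (URef) for inneighbourhoods. First I would let $B$ be the bipartite subdigraph of $G$ consisting of all edges directed from $V_0$ to $X$, assign each edge of $B$ independently and uniformly at random to one of $F$ classes, and let $E_f^+(X)$ consist of the edges in class $f$; the $E_f^+(X)$ are then automatically edge-disjoint. For $x\in V_0$ the degree $d^+_{E_f^+(X)}(x)$ is a sum of $|N^+_G(x)\cap X|$ independent Bernoulli$(1/F)$ variables with mean $|N^+_G(x)\cap X|/F$, and for $y\in X$ the degree $d^-_{E_f^+(X)}(y)$ has mean $|N^-_G(y)\cap V_0|/F$.

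For~(i), fix $x$ and $f$. If $|N^+_G(x)\cap W|<5\tilde\eps\tilde m$ the right-hand side of~(i) is negative and there is nothing to do; otherwise, since $\mathcal{P}_{2p}'$ is a $5\tilde\eps$-uniform $2sp$-refinement of $\tilde{\mathcal{P}}$ with respect to $\mathcal{G}\ni G$ (Section~\ref{unwinding}) and $X$ is one of the $2sp$ base $2p$-subclusters of $W$, property (URef) gives $|N^+_G(x)\cap X|\ge (1-5\tilde\eps)|N^+_G(x)\cap W|/(2sp)\ge (|N^+_G(x)\cap W|-5\tilde\eps\tilde m)/(2sp)$, and in particular $|N^+_G(x)\cap X|\ge \tilde\eps\tilde m/sp$. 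By~(\ref{eq:boundrp}) and~(\ref{boundL}) one has $F\le \tilde\alpha sM/(4\beta)$ and $\tilde m\ge n/M$, so the mean $|N^+_G(x)\cap X|/F$ is linear in $n$; hence Proposition~\ref{chernoff} applied with $a:=\eps$ shows that, with probability $1-e^{-\Omega(n)}$, $d^+_{E_f^+(X)}(x)\ge (1-\eps)|N^+_G(x)\cap X|/F\ge \frac{1-\eps}{2spF}(|N^+_G(x)\cap W|-5\tilde\eps\tilde m)$, which is~(i).

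For~(ii), fix $y$ and $f$; the constraint is automatic unless $|N^-_G(y)\cap V_0|\ge \sqrt\xi\beta_1 m_p/2$, so assume this. Then I would use $|V_0|\le \sqrt{\tilde\eps}n/2$ from~(\ref{V0bound}) together with the lower bounds $F=r_p/4p\ge (p-1)(s-1)(\tilde\alpha-\gamma)\tilde L/(4p\beta)$ (from~(\ref{rs}),~(\ref{rp}),~(\ref{eq:defr})) and $m_p\ge n/(2sp\tilde L)$ (from~(\ref{mp}) and~(\ref{L})); since $\tilde\eps$ is far smaller than all other constants in~(\ref{hierarchy}), in particular $\tilde\eps\ll\xi$ and $\tilde\eps\ll 1/p$, these combine to show that the mean $|N^-_G(y)\cap V_0|/F\le |V_0|/F$ is at most $\frac14\sqrt\xi\beta_1 m_p$, while still being linear in $n$ under our assumption. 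Proposition~\ref{chernoff} with $a:=1/2$ then gives $d^-_{E_f^+(X)}(y)<\frac32\cdot\frac14\sqrt\xi\beta_1 m_p<\sqrt\xi\beta_1 m_p/2$ with probability $1-e^{-\Omega(n)}$.

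Finally I would take a union bound over all $x\in V_0$, $y\in X$ and $1\le f\le F$ (at most $n^2$ events, each failing with probability $e^{-\Omega(n)}$) to conclude that with probability at least $1/2$ the random assignment satisfies~(i) and~(ii); each such event is the complement of a one-sided deviation event of the form treated in Theorem~\ref{derandom}, and the sum of the corresponding failure probabilities is well below $1/2$, so Theorem~\ref{derandom} produces the required assignment in time polynomial in $n$. The only point needing care is the parameter bookkeeping in~(ii) --- checking via the hierarchy~(\ref{hierarchy}) that $|V_0|/F$ is genuinely much smaller than $\sqrt\xi\beta_1 m_p$, and noting that one need only impose a deviation constraint for those $y$ with $|N^-_G(y)\cap V_0|$ large, so that the expectations fed into Theorem~\ref{derandom} are linear in $n$ and the usual small-mean difficulty is avoided. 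Beyond this I do not expect any genuine obstacle; the argument is routine.
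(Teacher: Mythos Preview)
Your proposal is correct and follows essentially the same approach as the paper: define the bipartite digraph of all edges from $V_0$ to $X$, assign each edge uniformly at random to one of $F$ classes, use (URef) for the $5\tilde\eps$-uniform $2sp$-refinement to lower-bound $|N^+_G(x)\cap X|$, apply the Chernoff bound of Proposition~\ref{chernoff} for both~(i) and~(ii) after disposing of the trivial cases, and take a union bound. The only cosmetic differences are that the paper conditions~(ii) on whether $V_0$ itself is large (rather than on the individual $|N^-_G(y)\cap V_0|$), uses $a=\eps/2$ rather than $a=\eps$ in~(i), and defers the derandomisation remark to the end of Section~\ref{exceptional2}; none of this affects the argument.
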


\noindent
To prove the claim, let $E^+(X)$ denote the digraph induced by the set of edges from $V_0$ to $X$ in $G$.
Now consider a random partition of the edges of $E^+(X)$ into $F$ parts $E_f^+(X)$.
More precisely, assign each edge of $E^+(X)$ to $E_f^+(X)$ with probability $1/F$, independently of all other edges.
There are several cases to consider. Say that $x \in V_0$ is \emph{prolific} if $\vert N_G^+(x) \cap W \vert > 5\tilde{\eps}\tilde{m}$. Say that $V_0$ is \emph{large} if $|V_0| \geq \sqrt{\xi}\beta_1m_p/2$ and \emph{small} otherwise.
Every $x \in V_0$ which is not prolific satisfies the condition in (i) with probability 1, and the inequality in (ii) is satisfied with probability 1 if $V_0$ is small. Suppose that $x$ is prolific. 
Then since $\cP_{2p}'$ is a $5\tilde{\eps}$-uniform $2sp$-refinement of $\tilde{\mathcal{P}}$,  (URef) implies that 
$d^+_{E^+(X)}(x) \ge \frac{1-5\tilde{\eps}}{2sp }|N^+_G(x) \cap W|.
$ 

Then for each $1 \leq f \leq F$, each prolific $x \in V_0$ and each $y \in X$,
$$
\ex \left( d^+_{E^+_f(X)}(x) \right) \ge \frac{1-5\tilde{\eps}}{2sp F}|N^+_G(x) \cap W| \ \ \mbox{ and } \ \ \ex \left( d^-_{E_f^+(X)}(y) \right) \le \frac{|V_0|}{F}.
$$ 

By Proposition~\ref{chernoff} (with $a := \eps/2$) we have that, for fixed $f$ and prolific $x \in V_0$,
\begin{eqnarray*}
\mathbb{P} \left( d^+_{E_f^+(X)}(x) \leq \dfrac{1-\eps}{2spF} |N_G^+(x) \cap W | \right) &\leq& \exp \left(-\dfrac{5\eps^2\tilde{\eps} \tilde{m}(1-5\tilde{\eps})}{24spF}\right)\\
\nonumber &\stackrel{(\ref{eq:boundrp})}{\leq}& \exp \left( -\dfrac{\tilde{\eps}^2\beta n}{s^2p\tilde{L}^2} \right) \stackrel{(\ref{hierarchy})}{\leq} e^{-\sqrt{n}}
\end{eqnarray*}%
\COMMENT{$\dfrac{5\eps^2\tilde{\eps}\tilde{m}(1-5\tilde{\eps})}{24spF} = \dfrac{5\tilde{\eps}\eps^2\tilde{m}(1-5\tilde{\eps})}{6sr_p} \geq \dfrac{\tilde{\eps}^2\beta n}{s^2 p\tilde{L}^2}$ since $r_p \leq \tilde{\alpha}sp\tilde{L}/\beta \leq sp\tilde{L}/\beta$.}
and $|V_0|F \leq n^2$. So taking a union bound over all $f$ and all $x \in V_0$ we see that the probability that (i) fails for some $f$ in this partition is at most $n^2e^{-\sqrt{n}}$. Similarly, for large $V_0$, fixed $f$ and $y \in X$, Proposition~\ref{chernoff} implies that
\begin{align} \label{ii}
\mathbb{P} \left( d^-_{E_f^+(X)}(y) > \dfrac{2|V_0|}{F} \right) &\leq \exp \left( -\dfrac{\sqrt{\xi}\beta_1m_p}{6F} \right)\\
\nonumber &= \exp \left( -\dfrac{2\sqrt{\xi}\beta_1m}{3sr_p} \right) \stackrel{(\ref{hierarchy}),(\ref{eq:boundrp})}{\leq} e^{-\sqrt{n}}.
\end{align}
Note that $n \leq 2L_p m_p$ by (\ref{mp}) and 
$$
r_p \stackrel{(\ref{rp})}{\geq} sp\tilde{r}/2 \stackrel{(\ref{eq:defr})}{\geq} sp\tilde{\alpha}\tilde{L}/3 \stackrel{(\ref{L})}{=} \tilde{\alpha}L_p/3.
$$
Thus
\begin{equation} \label{claim1y}
\frac{2|V_0|}{F}
\stackrel{(\ref{V0bound})}{\le} 4 \sqrt{\tilde{\eps}} n \frac{p}{r_p} 
\le \dfrac{24\sqrt{\tilde{\eps}}p m_p}{\tilde{\alpha}}
\stackrel{(\ref{hierarchy})}{<} \dfrac{\sqrt{\xi} \beta_1 m_p}{2}.
\end{equation}
Furthermore, $|X|F \leq n^2$, so (\ref{ii}) and (\ref{claim1y}) imply that the probability that (ii) fails for this partition is at most $n^2e^{-\sqrt{n}}$. 
Therefore the partition satisfies both (i) and (ii) with probability $1 - 2n^2 e^{-\sqrt{n}} \geq 1/2$.
This proves the claim.

\medskip

For each $k$ with $1 \le k \le 2p$, 
recall from the end of Section~\ref{redclusters} that the number of all graphs $G_i$ whose original type $t$ satisfies (a)
is $2F$. 
For each $k$, consider the ordering of all these digraphs as chosen in Section \ref{redclusters} and suppose that $G_i$ is the $f$th digraph with original type $t$.
We now define the edges of $G_i$ between $V_0$ and $V(G) \setminus V_{0,i}$.
For each base $s$-cluster $W_\ell$, let $W_\ell(k)$ denote the $k$th base $2p$-cluster contained in $W_\ell$.
Apply Claim~\ref{claim1} to obtain $F$ bipartite digraphs $E_f^+(W_\ell(k))$ and $F$ bipartite digraphs $E_f^-(W_\ell(k))$ for each $W_\ell(k)$. Now let $W_\ell'$ denote the $s$-$[i]$-cluster associated with $W_\ell$ and $W_\ell(k)'$ denote the $2p$-$[i]$-cluster associated with $W_\ell(k)$.
Let
$E_f^+(W_\ell(k)')$
be the subdigraph of $E_f^+(W_\ell(k))$ consisting of all edges whose final vertex lies in $W_\ell(k)'$
and let
$E_f^-(W_\ell(k))')$
be the subdigraph of $E_f^-(W_\ell(k))$ consisting of all edges whose initial vertex lies in $W_\ell(k)'$.
Then, by (\ref{largeint2}), for all $x \in V(G)$ we have
\begin{equation} \label{degdiff}
d^+_{E_f^+(W_\ell(k)')}(x) \geq d^+_{E_f^+(W_\ell(k))}(x)-\eps' m_p/2 -1.
\end{equation}
An analogous statement is true for the indegrees in $E_f^-$.
Recall that $k$ with $1 \leq k \leq 2p$ is defined by the fact that $G_i$ has original type $t$ and $t \equiv k \mod 2p$, and that $I_W^+$ and $I_W^-$ are the indices of the non-clean $s$-clusters in $W$ defined at the end of Section~\ref{redclusters}.
If $1 \le f \le F$, we add the following edges to $G_i$:
\begin{itemize}
\item all edges lying in the digraphs $E^+_f (W_\ell(k)')$ with $\ell \in I_W^+$;
\item all edges lying in the digraphs $E^-_f (W_\ell(k)')$ with $\ell \in I_W^-$.
\end{itemize}
If $F < f \le 2F$, we add the following edges to $G_i$:
\begin{itemize}
\item all edges lying in the digraphs $E^+_{f-F} (W_\ell(k)')$ with $\ell \in I_W^-$;
\item all edges lying in the digraphs $E^-_{f-F} (W_\ell(k)')$ with $\ell \in I_W^+$.
\end{itemize}

\COMMENT{We've used $(t)$-clusters here and not $[i]$-clusters because the way edges are assigned depends on the original type $t$ of $i$, and $[i]$-clusters are different for different $i$. EDIT: It's okay to use $[i]$-clusters here because our allocation of edges is done separately for each $i$.
}
Note this implies that all edges from $G_i \setminus V_0$ to $V_0$ have initial vertex in an out-red cluster and similarly for the in-red clusters. Moreover, the sets of edges assigned to $G_i$ and $G_{i'}$ are disjoint for $i \neq i'$. Indeed, this follows from the fact that, for $j \neq k$, $E_f^\pm(W_\ell(j))$ and $E_f^\pm(W_\ell(k))$ are clearly edge-disjoint; that for $f \neq f'$, $E_f^\pm(W_\ell(k)')$ and $E_{f'}^\pm(W_\ell(k)')$ are also edge-disjoint; and that each $E_f^\pm(W_\ell(k))$ is used for at most one of the $G_i$.

Therefore Claim~\ref{claim1},~(\ref{degdiff}) and~(\ref{inred}) imply that for all $x \in V_0$, we have that
\begin{equation*} 
d_{G_i}^+(x) \geq (s/2 -1 ) \sum_{W \in \tilde{\cP}} \left(\frac{1-\eps}{2sp F}(|N^+_G(x) \cap W|-5\tilde{\eps}\tilde{m}) - \dfrac{\eps' m_p}{2} -1 \right).
\end{equation*}
Note also that
\begin{equation} \label{sKF}
2spF = \frac{s r_p}{2} \stackrel{(\ref{eq:boundrp})}{\le} \frac{s}{2} \frac{\tilde{\alpha} L_p}{\beta} 
\stackrel{(\ref{mp})}{\le} \frac{s}{2} \frac{\tilde{\alpha} n}{\beta m_p}. 
\end{equation}
So
\begin{eqnarray} \label{eq:xGi}
\nonumber d_{G_i}^+(x) &\ge& (s/2 -1)  \frac{(1-\eps')}{2sp F} (\tilde{\alpha} n -|V_0| - 2{\eps}'n)\\
&\stackrel{(\ref{V0bound}),(\ref{sKF})}{\ge}& (1-4 \eps')  \beta m_p \stackrel{(\ref{eq:beta_1})}{\ge} \beta_1 m_p \stackrel{(\ref{eq:kappa})}{\ge} \kappa,
\end{eqnarray}
and we have an analogue for indegrees. So we can delete edges from each 
$x \in V_0$ so that $d_{G_i}^\pm(x) = \kappa$ in each slice and hence (Red1) holds for all vertices in $V_0$.


\subsection{Incorporating the special exceptional set $V_{0,i}^{\rm spec}$} \label{exceptional2}

We now prove a claim which will be used to achieve (Red1) for the set $V_{0,i}^{\rm spec}$ of special exceptional vertices.
Before this, we first need to derive a further property (H1$'$) of $H_1^\pm$ from (H1).

Write $S_i^+$ for the collection of vertices contained in the out-red $2p$-$[i]$-clusters and define $S_i^-$ analogously. Note that each of $S_i^\pm$ consists of the vertices in exactly $s/2-1$ of the $2p$-$[i]$-clusters in each adapted $s$-$[i]$-cluster.

 For every $1 \leq k \leq 2p$ and every base $s$-cluster $U \in \mathcal{P}_s'$, let $U(k)$ be the $k$th base $2p$-cluster of $U$, and write $H_{1,k}^+$ for the spanning subdigraph of $H_1^+$ consisting of all edges whose final vertex lies in $\bigcup_{U \in \mathcal{P}_s'}{U(k)}$. Also define $H_{1,k}^-$ to be the spanning subdigraph of $H_1^-$ consisting of all edges whose initial vertex lies in $\bigcup_{U \in \mathcal{P}_s'}{U(k)}$.
We have the following property of $H_1^\pm$:
\begin{itemize}
\item[(H1$'$)] For all $x \in V(G) \setminus V_0$, whenever $i$ has original type $t$ and $k$ satisfies $1 \leq k \leq 2p$ and (a) we have that
$$
\dfrac{\gamma \tilde{\alpha}n}{20p} \leq |N_{H_{1,k}^+}^+(x) \cap S_i^-| ~~,~~ |N_{H_{1,k}^-}^-(x) \cap S_i^+| \leq \dfrac{\gamma \tilde{\alpha}n}{p}.
$$
\end{itemize}

\noindent
To prove (H1$'$), note that since $\mathcal{P}_{2p}'$ was a $5\tilde{\eps}$-uniform $2sp$-refinement of $\tilde{\mathcal{P}}$, (URef) implies that, for each $x \in V(G)\setminus V_0$, each $1 \leq k \leq 2p$ and $U \in \mathcal{P}_s'$,
$$
	|N^+_{H_{1,k}^+}(x) \cap U(k)| \geq \dfrac{(1-5\tilde{\eps})}{2sp}\left(|N^+_{H_1^+}(x) \cap \tilde{U}| - 5\tilde{\eps}\tilde{m}\right)
$$
where $\tilde{U}$ is the base primary cluster containing $U$. 
If $U(k)_i$ is the $2p$-$[i]$-cluster associated with $U(k)$, (\ref{largeint2}) implies that
$$
|N_{H_{1,k}^+}^+(x) \cap U(k)_i| \geq |N_{H_{1,k}^+}^+(x) \cap U(k)|-\eps'm_p/2-1.
$$%
\COMMENT{(Recall that (\ref{largeint}) still holds after the removal of bridge vertices). }
But whenever $i$ has original type $t$ and $k$ satisfies (a), $S_i^-$ contains all the vertices from exactly $s/2-1$ of the $2p$-$[i]$-clusters $U(k)_i$ contained in each adapted $[i]$-cluster $\tilde{U}_i$ associated with $\tilde{U}$, so
\begin{align*}
|N_{H_{1,k}^+}^+(x) \cap S_i^- \cap \tilde{U}_i| &\geq (s/2-1)\left( \dfrac{1-5\tilde{\eps}}{2sp}(|N^+_{H_{1}^+}(x) \cap \tilde{U}| - 5\tilde{\eps}\tilde{m}) - \dfrac{\eps'm_p}{2} -1\right)\\
&\geq |N^+_{H_{1}^+}(x) \cap \tilde{U}|/6p - \eps'sm_p.
\end{align*}

Therefore,%
\COMMENT{$\tilde{U}_t \subseteq \tilde{U}$ so $S_i^- \cap \tilde{U}_t = S_i^- \cap \tilde{U}$}
summing over all base primary clusters $\tilde{U}$ and recalling that $V_0$ is an isolated set in $H_1^+$ we have that 
$$
|N^+_{H_{1,k}^+}(x) \cap S_i^-| \geq \dfrac{d_{H_1^+}^+(x)}{6p} - \eps' s m_p\tilde{L} \stackrel{(\text{H}1)}{\geq} \dfrac{\gamma\tilde{\alpha}n}{20p}.
$$
The other bounds in (H1$'$) follow similarly. 
\medskip

\begin{claim} \label{claim2} For each $i$ with $1 \le i \le r_p$, there are subdigraphs $Q^+_i$ of $H^+_1$ and $Q_i^-$ of $H^-_1$ each consisting of edges between
$V_{0,i}^{\rm spec}$ and $V(G) \setminus V_{0,i}$
so that 
\begin{itemize}
\item[(i)] for all $x \in V_{0,i}^{\rm spec}$ we have $|N^+_{Q_{i}^+}(x) \cap S_i^-|, |N^-_{Q_i^-}(x) \cap S_i^+| \ge  \kappa$.
\item[(ii)] For all $y \in V(G) \setminus V_{0,i}$ we have $d^+_{Q^-_i}(y), d^-_{Q^+_i}(y) \leq \sqrt{\xi}\beta_1 m_p/3$.
\item[(iii)] all the $Q_i^\pm$ are pairwise edge-disjoint.
\end{itemize}
\end{claim}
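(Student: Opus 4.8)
The plan is to mimic the strategy used for Claim~\ref{claim1}: for each $i$ we take the bipartite digraph of all edges of $H_1^+$ between $V_{0,i}^{\rm spec}$ and the out-red $2p$-$[i]$-clusters $S_i^-$, randomly split it into $F$ parts (one for each of the slices sharing the same residue $k$ and the same half of the ordering), and verify via Proposition~\ref{chernoff} that the relevant in- and outdegrees concentrate around their expectations. The key inputs are (H1$'$) (which gives, for each $x \in V(G)\setminus V_0$, a lower bound of $\gamma\tilde\alpha n/20p$ and an upper bound of $\gamma\tilde\alpha n/p$ on $|N^+_{H_{1,k}^+}(x)\cap S_i^-|$, and symmetrically for in-neighbourhoods) together with the fact that $V_{0,i}^{\rm spec}$ is small, namely $|V_{0,i}^{\rm spec}| \le \eps n/2$ by the bound just after its definition in Section~\ref{unwinding}, plus the vertices added in Section~\ref{connect}, so still $|V_{0,i}^{\rm spec}| \le \eps n$.

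First I would recall that $H_{1,k}^+$ depends only on $k$, which in turn is determined by the original type $t$ of $i$ via $t\equiv k\bmod 2p$; so for a fixed $k$ there are exactly $2F$ slices $G_i$ to be served, and by the ordering chosen at the end of Section~\ref{redclusters} the $f$th and $(f+F)$th slices for $1\le f\le F$ use $S_i^-$, resp.\ $S_i^+$, in complementary roles. For a fixed $k$, partition the edges of $H_{1,k}^+$ into $F$ parts $E_1,\dots,E_F$ by assigning each edge independently to part $f$ with probability $1/F$; set $Q_i^+ := $ the part assigned to $i$ restricted to edges from $V_{0,i}^{\rm spec}$ to $S_i^-$ (resp.\ $S_i^+$ according to whether $f\le F$). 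For $x\in V_{0,i}^{\rm spec}\subseteq V(G)\setminus V_0$ (note $V_{0,i}^{\rm spec}$ is disjoint from $V_0$), (H1$'$) gives $\ex(|N^+_{Q_i^+}(x)\cap S_i^-|)\ge \gamma\tilde\alpha n/20pF$; since $F = r_p/4p$ and $r_p\le \tilde\alpha L_p/\beta \le \tilde\alpha n/\beta m_p$ by (\ref{eq:boundrp}) and (\ref{mp}), this expectation is at least of order $\beta m_p \gg \kappa = (1-\gamma)\beta_1 m_p$, so Proposition~\ref{chernoff} with $a$ a small constant gives failure probability $e^{-\Omega(n)}$, and a union bound over $x$ and the $\le 2p$ values of $k$ handles (i). For (ii), for $y\in V(G)\setminus V_{0,i}$ we have $\ex(d^-_{Q_i^+}(y)) \le |N^-_{H_{1,k}^+}(y)\cap S_i^-|/F \le \gamma\tilde\alpha n/pF = 4\gamma\tilde\alpha n/r_p$, which is at most $4\gamma\tilde\alpha\cdot 3 m_p$ (using $r_p\ge \tilde\alpha L_p/3$ as in the displayed bound before (\ref{claim1y}) and $n\le 2L_pm_p$), hence $\le 24\gamma\tilde\alpha m_p < \sqrt{\xi}\beta_1 m_p/6$ by the hierarchy~(\ref{hierarchy}); Proposition~\ref{chernoff} then bounds $\pr(d^-_{Q_i^+}(y) > \sqrt{\xi}\beta_1 m_p/3)$ by $e^{-\Omega(n)}$, and again a union bound gives (ii). The $Q_i^-$ are obtained identically from $H_{1,k}^-$, $S_i^+$. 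Edge-disjointness in (iii) is immediate: distinct residues $k$ give edge-disjoint $H_{1,k}^\pm$ (the final vertex, resp.\ initial vertex, lies in a distinct base $2p$-cluster), the random partition makes the $F$ parts for a fixed $k$ edge-disjoint, and $H_1^+$ and $H_1^-$ are themselves edge-disjoint; each part is used for at most one slice.

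The main obstacle is bookkeeping rather than mathematics: one must be careful that the degree bounds in (H1$'$) are stated for $S_i^\pm$ which are unions of $2p$-$[i]$-clusters (the slice-dependent clusters), not base $2p$-clusters, so one should either run the random partition on the base digraph $H_{1,k}^+$ and then restrict to $S_i^-$ afterwards — incurring the usual $\eps' m_p/2 + 1$ loss from (\ref{largeint2}), which is already absorbed into the constant $20p$ in (H1$'$) — or run it directly on the restriction; either way the loss is negligible. A secondary point is to confirm that the number of applications of Proposition~\ref{chernoff} is polynomial in $n$ (it is $O(n)$ per residue, $O(pn)=O(n)$ total), so the whole construction derandomises via Theorem~\ref{derandom}, giving the polynomial-time guarantee; concretely one checks $\sum_i e^{-\beta_i^2\ex(\phi_i)/3} \le \mathrm{poly}(n)\cdot e^{-\Omega(n)} \le 1/2$. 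Everything else is a routine union bound.
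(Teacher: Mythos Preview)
Your approach has a genuine quantitative gap in part~(i). You partition the edges of $H_{1,k}^+$ uniformly into $F = r_p/4p$ parts, so for $x\in V_{0,i}^{\rm spec}$ the expected value of $|N^+_{Q_i^+}(x)\cap S_i^-|$ is at most $\gamma\tilde\alpha n/(20pF) = \gamma\tilde\alpha n/(5r_p)$. Using $r_p \ge \tilde\alpha L_p/3$ (or even the upper bound you quote) this is only of order $\gamma\beta m_p$, not $\beta m_p$ as you assert: the factor $\gamma$ coming from the density of $H_1^+$ (see (H1) and (H1$'$)) does not disappear. Since $\kappa = (1-\gamma)\beta_1 m_p \approx \beta m_p$ and $\gamma \ll 1$, the expectation is far below $\kappa$ and no Chernoff bound can rescue~(i).

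The point you are missing is that, unlike the core exceptional set $V_0$, a given vertex $x\notin V_0$ lies in $V_{0,i}^{\rm spec}$ for only very few indices $i$. Because $x$ is good (in the sense of Section~\ref{begin}) one has $|T(x)| := |\{i : x\in V_{0,i}^{\rm spec}\}| \le \xi L_p/\beta$, which is smaller than $r_p \approx \tilde\alpha L_p/\beta$ by a factor of $\xi/\tilde\alpha$. The paper exploits this: for each $k$ and each edge of $H_{1,k}^+$ with initial vertex $x$, it assigns the edge to one of the $Q_{i,k}^+$ with $i\in T(x)$, each with probability $q = \beta/(\xi L_p)$. This gives an expectation of at least $q\cdot \gamma\tilde\alpha n/(20p) \ge 2\beta m_p$ (using $\xi \ll 1/p$ and (\ref{mp})), which comfortably exceeds $\kappa$. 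Your scheme instead spreads $x$'s edges across all $F$ slices with the given residue, wasting almost all of them on slices where $x$ is not special; replacing your probability $1/F$ by $1/|T(x)|$ (or the uniform lower bound $q$) is exactly what is needed.
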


\noindent
To prove the claim, for each vertex $x$ in $V(G) \setminus V_0$, we let
$T(x) := \lbrace i : x \in V_{0,i}^{\rm spec} \rbrace$.
Recall that $x \in V_{0,i}^{\rm spec}$ if and only if
\begin{itemize}
\item[(A)] $x \in \tilde{V}_{0,t}^{\rm spec}$ and $i$ has original type $t$; or
\item[(B)] $x$ was removed to compensate for the removal of a bridge vertex.
\end{itemize}%
\COMMENT{It is not enough for know that each $x$ is good. This only gives a bound on the number of special exceptional sets in which $x$ lies if $x$ was removed to ensure superregularity (A). If it was removed to compensate for a bridge vertex (B), we could have chosen this same $x$ in every slice, so even though $x$ is good, it lies in many special exceptional sets. However, we specified in Sec 5.5 that every `compensating vertex' is unique for a particular slice and $p$-cluster within that slice. I have made this more explicit in Sec 5.5. So we do have $|T(x)|=1$ whenever $x$ is removed due to (B).}
Note that $x$ can satisfy both (A) and (B).
 Suppose that $x$ satisfies (A). Let $\mathcal{L}_x = \lbrace t : x \in \tilde{V}_{0,t}^{\rm spec}\ \rbrace$. 
Note $x \notin V_0$. So $x$ is good in the sense of Section \ref{begin}, and hence $|\cL_x| \leq \xi \tilde{L}/\beta$. 
As observed before (\ref{V0ibound}),
any $x \in V(G) \setminus V_0$ is in at most one set $V_{0,i}^{\rm spec}$ due to (B).
Therefore 
$$
|T(x)| \leq |\cL_x| (s-1)(p-1) +1 \leq \xi \tilde{L} sp/\beta \stackrel{(\ref{L})}{=} \xi L_p/\beta.
$$
For each $1 \leq i \leq r_p$ and each $1 \leq k \leq 2p$ we define digraphs $Q^+_{i,k}$ as follows. For each $k$, we randomly assign each edge of $H_{1,k}^+$ whose initial vertex is $x$ to one of the digraphs $Q_{i,k}^+$ with $i \in T(x)$
with probability $q := \beta/\xi L_p$ (independently of all other edges, and each edge is assigned to at most one of the $Q_{i,k}^+$).
The sum of the probabilities is at most $1$.
Note that $V_0$ is an isolated set in $H_{1,k}^\pm$.
Now define $Q_i^+ := Q_{i,k}^+$ where $i$ has original type $t$ and $k$ satisfies (a).
Then (iii) certainly holds, and for all $x \in V_{0,i}^{\rm spec}$, we have
\begin{equation} \label{degQix}
\ex \left(|N^+_{Q_i^+}(x) \cap S_i^-| \right) = \frac{\beta |N^+_{H^+_{1,k}}(x) \cap S_i^-|}{\xi L_p} \stackrel{(\text{H}1')}{\ge} \frac{\gamma \tilde{\alpha} \beta n}{20p\xi L_p} 
\stackrel{(\ref{mp})}{\ge} 2\beta   m_p.
\end{equation}
Proposition~\ref{chernoff} implies that, for fixed $1 \leq i \leq r_p$ and fixed $x \in V_{0,i}^{\rm spec}$,
$$
\mathbb{P} \left( |N^+_{Q_i^+}(x) \cap S_i^-| < \beta_1 m_p \right) \leq \exp\left(-\dfrac{\beta m_p}{6} \right) \stackrel{(\ref{L}),(\ref{mp})}{\leq} \exp\left( -\dfrac{\beta n}{12 sp\tilde{L}} \right) \leq e^{-\sqrt{n}}.
$$
So a union bound implies that the probability that there exist $i$ and $x$ not satisfying this inequality is at most $n^2 e^{-\sqrt{n}} < 1/4$. 
(i) now follows since $\kappa \leq \beta_1 m_p$ by (\ref{eq:kappa}).
For (ii), note that for \emph{any} vertex $y \in V(G)$ we have
\begin{equation} \label{degQiy}
\mathbb{E} \left( d^-_{Q_i^+}(y) \right) \leq q|V_{0,i}^{\rm spec}| \stackrel{(\ref{V0ibound})}{\leq} \dfrac{\beta}{\xi L_p}\eps n \stackrel{(\ref{mp})}{\leq} \dfrac{2\eps}{\xi}\beta m_p \leq \sqrt{\xi} \beta m_p/4.
\end{equation}
Proposition~\ref{chernoff} shows (as in Claim \ref{claim1}) that the probability that the condition in (ii) fails for some $i$ and some $y \in V(G)$ is at most%
\COMMENT{$r_p n \exp (-\sqrt{\xi}\beta m_p/108) \stackrel{(\ref{eq:boundrp})}{\leq} n^2 e^{-\sqrt{n}}$} $1/4$.
So there is a choice of $Q_1^+ , \ldots , Q_{r_p}^+$ so that all the conditions hold, and similarly for $Q_1^- , \ldots , Q_{r_p}^-$, which proves Claim \ref{claim2}.

\medskip

\noindent
It is now easy to obtain the edges of $G_i$ between $V_{0,i}^{\rm spec}$ and $V(G) \setminus V_{0,i}$. Apply Claim \ref{claim2} to find edge-disjoint digraphs $Q_i^\pm$ for each $1 \leq i \leq r_p$. Recall that $S_i^\pm \subseteq V(G) \setminus V_{0,i}$ and so (Red6) will follow if we add $i$-red edges with initial vertex in $S_i^+$ or final vertex in $S_i^-$. So for each $x \in V_{0,i}$ we add exactly $\kappa$ edges in $Q_i^+$ going from $x$ to $S_i^-$ and exactly $\kappa$ edges in $Q_i^-$ going to $x$ from $S_i^+$.

We have now incorporated $V_{0,i}$ into each $G_i$. It remains to verify that (Red0)--(Red7) hold.
Recall that we partially verified these properties for the red vertices incident to bridge vertices at the end of Section~\ref{connect}.
In particular,
(Red0) was achieved in Section \ref{connect} and the edges we have added here do not affect it. The previous paragraph shows that (Red1) holds for all vertices in $V_{0,i}^{\rm spec}$. Since we already verified it for the bridge vertices $V_{0,i}^{\rm bridge}$ in Section~\ref{connect} and for $V_0$ in Section~\ref{exceptional}, it now 
holds for all vertices in $V_{0,i}$.
Clearly, our construction satisfies (Red2).
(Red3) follows from Claims \ref{claim1}(ii) and \ref{claim2}(ii) and the fact that each non-exceptional vertex is incident to at most one bridge vertex in each slice.
Recall that, in Section \ref{redclusters}, we showed how (Red4) and (Red5) follow from (a) and (b) of the construction.
(Red6) follows from the fact that in constructions including $V_0$ and $V_{0,i}^{\rm spec}$ and $V_{0,i}^{\rm bridge}$, the outedges from $V_{0,i}$ always went to in-red clusters and the inedges
to $V_{0,i}$ came from out-red clusters. 
(Red7) follows immediately from the edge-disjointness of the digraphs in Claims~\ref{claim1} and~\ref{claim2} and the observation in the final paragraph of Section~\ref{connect}.

Note that Theorem \ref{derandom} implies that the proofs of Claims \ref{claim1} and \ref{claim2} can be `derandomised' and so red edges satisfying (Red0)--(Red7) can be found in polynomial time. 


\subsection{Finding shadow balancing sequences} \label{shadow}

We have now incorporated all the exceptional vertices to form $r_p$ edge-disjoint slices $G_i$ of $G$, together containing almost all edges, 
such that each slice is a spanning almost-regular subdigraph of $G$. The main aim of this section is to add further red edges to each slice $G_i$ so that the number of red edges sent out by vertices in each cluster $V$ equals the number received by its successor $V^+$ on the cycle of $F_i$ containing $V$.

This `balancing property' is necessary for the following reason. Suppose that $V$ is out-red and suppose that we have a 1-factor $f$ containing a red edge sent out to $V_{0,i}$ by a vertex $x \in V$.
If $V^+$ is not red, any edge of $f$ to $V^+$ must have its initial vertex in $V$. So $f[V,V^+]$ must be a perfect matching, which is impossible since there can be no edge in $f$ from $x$ to a vertex in $V^+$. Note that the absence of red edges incident to $V^-$ does not give rise to the above problem. But we observe a similar problem for $U , U^-$ when $U$ is in-red. So the above `balancing property' is certainly necessary to obtain even a single 1-factor. We will see in Section \ref{aldecomp} that, combined with our other properties, it is also sufficient.

We will add `balancing edges' between non-exceptional vertices to achieve the above property while also ensuring that no vertex is incident to many red edges. 
As indicated above, it will turn out to be sufficient to only add such edges to either the predecessor or successor of existing red clusters. By the end of Section~\ref{balance} our new red clusters will consist of consecutive pairs, well-spaced around each blown-up cycle.

We will first find `shadow balancing edges' in the reduced digraph
between suitable cluster pairs. For this, we will use the fact that $R_p$ is a robust outexpander. Then we will choose the required number of edges from the sparse pre-reserved subdigraph $H_2$ induced on these pairs.
When doing this, we need to be careful to maintain (Red6) with $p$ replaced by $p-1$.

Given $G_i$, we denote the set of red $p$-clusters by $T$ (so we suppress the dependence on $i$ here).
Let $T_{\rm in}$ denote the set of in-red clusters and define $T_{\rm out}$ similarly, so $T = T_{\rm in} \cup T_{\rm out}$.
For a set $S \subseteq T$ of $p$-clusters, we let $S^-$ denote the predecessors of $S$ on $T$ and define $S^+$ similarly. 

Now, for each $1 \leq i \leq r_p$ and each $p$-cluster $V$, let
\begin{equation} \label{siV}
s_i^\pm(V) := \sum\limits_{y \in V} {\vert N^\pm_{G_i}(y) \cap V_{0,i} \vert}
\end{equation}
be the number of red edges entering/leaving $V$. 
So $s^+_i(V)  \neq 0$ only if $V \in T_{\rm out}$ and $s^-_i(V) \neq 0$ only if $V \in T_{\rm in}$.
Note that (Red1) implies that 
\begin{equation} \label{eqbalance}
\sum_{V \in R_p} s_i^+(V)=\sum_{V \in R_p} s_i^-(V).
\end{equation}
Let
\begin{equation} \label{b}
b := \frac{ \xi^{1/6} \beta_1 m_p^2}{ L_p }\ \ \mbox{ and } \ \ c := \xi^{1/5} \beta_1 m_p^2. 
\end{equation}%
\COMMENT{the powers of $\xi$ have swapped}
A \emph{balancing sequence} $B_i$ with respect to $G_i$ is a spanning subdigraph of $H_2$ with the following properties:
\begin{enumerate}
\item[(B1)] $d^\pm_{B_i}(y) \leq 8\xi^{1/6}  \beta_1 m_p$ for every $y \notin V_{0,i}$;
\item[(B2)] We have the following degree conditions: \[
d^+_{B_i}(V) =
\left\{
	\begin{array}{ll}
		s_i^-(V^+)+c & \mbox{if } V \in T_{\rm in}^-\\
        c  & \mbox{if } V \in T_{\rm out} \\
		0 & \mbox{otherwise } 
	\end{array}
\right.
\]
\[
d^-_{B_i}(V) =
\left\{
	\begin{array}{ll}
		c  & \mbox{if } V \in T_{\rm in} \\
		s_i^+(V^-)+c & \mbox{if } V \in T_{\rm out}^+\\
		0 & \mbox{otherwise } 
	\end{array}
\right.
\]%
\COMMENT{this needs prettifying. Okay? Also $d^+_{B_i}(V)$ needs to be defined. Defined in the notation section}
\end{enumerate} 

We will use so called `shadow balancing sequences' as a framework to find balancing sequences. For this, define an auxiliary digraph $R^*$ with $V(R^*)=T$ as follows. Let
\begin{equation} \label{defin*}
N^+_{R^*}(V) =
\left\{
	\begin{array}{ll}
		\left( N^+_{R_p}(V^-) \cap T_{\rm in}\right) \cup \left(  N^+_{R_p}(V^-) \cap T_{\rm out}^+ \right)^-  & \mbox{if } V \in T_{\rm in} \\
		\left( N^+_{R_p}(V) \cap T_{\rm in} \right) \cup \left( N^+_{R_p}(V) \cap T_{\rm out}^+ \right)^- & \mbox{if } V \in T_{\rm out}
	\end{array}
\right.
\end{equation}
This definition reflects the fact that red edges entering $V \in T_{\rm in}$ will be balanced by edges leaving $V^-$ (and entering either $T_{\rm in}$ or the successor $W^+$ of some $W \in T_{\rm out}$). Similarly an edge leaving $V \in T_{\rm out}$ will be balanced by an edge entering $V^+$.
Note that $R^*$ depends on $i$. If we need to emphasise this, we write $R^*_i$.

Define a \emph{shadow balancing sequence} $B_i^\prime$ to be a multidigraph with vertex set $V(R^*)$ whose edges are copies of edges of $R^*$ as follows. Let
\[
n_V^+ :=
\left\{
	\begin{array}{ll}
		s_i^-(V)+c  & \mbox{if } V \in T_{\rm in} \\
		c & \mbox{if } V \in T_{\rm out}
	\end{array}
\right.
\ \ \mbox{ and } \ \ 
n_V^- :=
\left\{
	\begin{array}{ll}
		c  & \mbox{if } V \in T_{\rm in} \\
		s_i^+(V)+c & \mbox{if } V \in T_{\rm out}
	\end{array}
\right.
\]
Then $B_i'$ has the following properties:
\begin{itemize}
\item[(B1$'$)] no edge of $R^*$ appears more than $b$ times in $B_i^\prime$.
\item[(B2$'$)] For every $V \in V(R^*)$, we have $d^+_{B'_i}(V)= n^+_V$ and $d^-_{B'_i}(V)= n^-_V$.
\end{itemize} 
Note that (\ref{eqbalance}) implies that 
\begin{equation} \label{eqbalance2}
\sum_{V \in R^*} n_V^+=\sum_{V \in R^*} n_V^-.
\end{equation}
To find these shadow balancing sequences, we will need that $R^*$ is a robust outexpander with sufficiently large minimum semidegree.

\begin{claim} \label{claimbal} 
Let $\nu'=\nu^3/64$. Then
\begin{itemize}
\item[(i)] $R^*$ is a robust $(\nu',12\tau)$-outexpander.
\item[(ii)] $\delta^0(R^*) \ge \tilde{\alpha} |R^*|/4.$
\end{itemize} 
\end{claim}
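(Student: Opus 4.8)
The plan is to prove (ii) by a direct degree count and (i) by recognising $R^*$ as, up to a relabelling of $T$, the $(s-2)$-fold blow-up of a digraph on $V(\tilde R)$ which inherits robust outexpansion from $\tilde R'$, so that Lemma~\ref{expanderblowup}(ii) finishes the job. Throughout write $t=t(i)$ for the original factor type of $G_i$ and $\psi(V)$ for the adapted primary cluster containing a $p$-cluster $V$. The facts I would use without further comment: $R_p=sp\otimes\tilde R$; each adapted primary cluster contains exactly $s/2-1$ in-red and exactly $s/2-1$ out-red $p$-clusters, so $|R^*|=|T|=(s-2)\tilde L$; and $\delta^0(\tilde R)\ge\delta^0(\tilde R')\ge(\tilde\alpha-3d')\tilde L\ge\tilde\alpha\tilde L/2$ by Lemma~\ref{reduced}(i) and the hierarchy~(\ref{hierarchy}).

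For (ii): fix $V\in V(R^*)=T$ and set $V':=V^-$ if $V\in T_{\rm in}$ and $V':=V$ if $V\in T_{\rm out}$, so that in either case the definition of $N^+_{R^*}$ gives $N^+_{R^*}(V)\supseteq N^+_{R_p}(V')\cap T_{\rm in}$. Since $R_p=sp\otimes\tilde R$, the set $N^+_{R_p}(V')$ consists of all $p$-clusters lying in an adapted primary cluster of $N^+_{\tilde R}(\psi(V'))$, and each such adapted primary cluster contains exactly $s/2-1$ in-red $p$-clusters, whence $d^+_{R^*}(V)\ge(s/2-1)\,\delta^+(\tilde R)\ge\tfrac{s-2}{2}\cdot\tfrac{\tilde\alpha}{2}\tilde L=\tilde\alpha|R^*|/4$. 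For the in-degree, unravelling the definition of $N^+_{R^*}$ shows that the in-neighbours of $V$ in $R^*$ include every $W\in T_{\rm out}$ with $\psi(W)\in N^-_{\tilde R}(\psi(V))$ when $V\in T_{\rm in}$, and every $W\in T_{\rm out}$ with $\psi(W)\in N^-_{\tilde R}(\psi(V^+))$ when $V\in T_{\rm out}$; there are at least $(s/2-1)\,\delta^-(\tilde R)\ge\tilde\alpha|R^*|/4$ of these. This gives (ii).

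For (i): let $\sigma$ be the successor permutation of the $1$-factor $\tilde F_t$ of $\tilde R(\beta)$, regarded as a permutation of $V(\tilde R)$; thus $\tilde A\,\sigma(\tilde A)\in E(\tilde R)$ for every $\tilde A$, and since the cycles of $F_i$ wind around those of $\tilde F_t$ we have $\psi(V^-)=\sigma^{-1}(\psi(V))$ and $\psi(V^+)=\sigma(\psi(V))$ for every $p$-cluster $V$. Relabel each $V\in T$ by $\phi(V):=\psi(V)$ if $V\in T_{\rm out}$ and $\phi(V):=\sigma^{-1}(\psi(V))$ if $V\in T_{\rm in}$; then exactly $s-2$ elements of $T$ take each value of $\phi$. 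Substituting $\psi(V^\pm)=\sigma^{\pm1}(\psi(V))$ into the four cases of the definition of $N^+_{R^*}$, one checks in every case that for $V,V'\in T$ with $\phi(V)=\tilde A$ and $\phi(V')=\tilde B$ one has $VV'\in E(R^*)$ if and only if $\tilde A\,\sigma(\tilde B)\in E(\tilde R)$. So, identifying $T$ with $V(\tilde R)\times[s-2]$ via $\phi$, $R^*$ equals the $(s-2)$-fold blow-up of the digraph $P$ on $V(\tilde R)$ with $\tilde A\,\tilde B\in E(P)$ iff $\tilde A\,\sigma(\tilde B)\in E(\tilde R)$, up to the harmless omission of self-loops. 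Now $P$ contains the spanning subdigraph $P'$ with $\tilde A\,\tilde B\in E(P')$ iff $\tilde A\,\sigma(\tilde B)\in E(\tilde R')$, and since $N^-_{P'}(w)=N^-_{\tilde R'}(\sigma(w))$ for all $w$ we get $RN^+_{\nu,P'}(S)=\sigma^{-1}(RN^+_{\nu,\tilde R'}(S))$ for every $S$; as $\tilde R'$ is a robust $(\nu/4,3\tau)$-outexpander by Lemma~\ref{inherit}, so is $P'$. Since $|P'|=\tilde L\ge 3$ and $0<3(\nu/4)\le 3\tau<1$, Lemma~\ref{expanderblowup}(ii) shows $(s-2)\otimes P'$ is a robust $((\nu/4)^3,6\tau)=(\nu^3/64,6\tau)$-outexpander; as $R^*$ contains all of its edges (self-loops aside, which affect robust out-neighbourhoods negligibly) and a robust $(\nu^3/64,6\tau)$-outexpander is a robust $(\nu^3/64,12\tau)$-outexpander, (i) follows.

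The hard part is (i), and specifically making the structural identification rigorous: one must carefully track how the predecessor/successor maps on $p$-clusters collapse — through the unwinding of Section~\ref{unwinding} — to the single permutation $\sigma=\sigma^+_t$ of $V(\tilde R)$, then verify the edge equivalence $VV'\in E(R^*)\iff\phi(V)\,\sigma(\phi(V'))\in E(\tilde R)$ across all four in/out combinations, and confirm that the within-class edges and the loops of $P$ (hence self-loops of the blow-up) do no damage. Once this is done, both statements are short consequences of Lemma~\ref{reduced}, Lemma~\ref{inherit} and Lemma~\ref{expanderblowup}.
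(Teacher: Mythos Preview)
Your proof is correct and takes a genuinely different route from the paper's. The paper handles $S\subseteq T_{\rm in}$ and $S\subseteq T_{\rm out}$ separately: it observes that $R_p[T_{\rm in}]$ and $R_p[T_{\rm out}^+]$ are each $(s/2-1)$-fold blow-ups of $\tilde R$, hence robust outexpanders by Lemma~\ref{expanderblowup}; then a replacement argument (moving each cluster of $S^-$ to one in the same adapted primary cluster lying in $T_{\rm in}$, respectively $T_{\rm out}^+$) gives $|RN^+_{\nu',R^*}(S)|\ge 2|S|+\nu'|R^*|$ in each case, and the two cases are combined via ``at least half of $S$ lies on one side''. Your approach is structurally cleaner: you identify $R^*$ (up to loops) as the full $(s-2)$-fold blow-up of a single digraph $P$ on $V(\tilde R)$, obtained by twisting $\tilde R$ via the successor permutation $\sigma$ of $\tilde F_t$, so that a single application of Lemma~\ref{expanderblowup} finishes. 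The edge equivalence $VV'\in E(R^*)\iff \phi(V)\,\sigma(\phi(V'))\in E(\tilde R)$ does indeed check out in all four cases. This yields a more conceptual explanation of why $R^*$ expands and avoids the case split.

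Two minor remarks. First, Lemma~\ref{expanderblowup} requires the blow-up factor $r=s-2\ge 3$ (which holds since $1/s\ll\gamma$), not $|P'|\ge 3$. Second, since $P'$ may carry loops (whenever $\tilde A\sigma(\tilde A)\in E(\tilde R')$), you should strictly apply the lemma to its loopless part; but this costs at most one in each in-neighbourhood count, shifting the robust expansion parameter by $O(1/\tilde L)\ll\nu$, which is absorbed by the hierarchy and the slack in passing to $12\tau$.
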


\noindent
To prove part (i) of the claim, we will use the fact that an $(s/2-1)$-fold blow-up of a robust $(\nu/4,3\tau)$-outexpander is a $(\nu',6\tau)$-robust outexpander
(see Lemma~\ref{expanderblowup}).
Let $R^{\rm in}_p=R_p[T_{\rm in}]$ and $R^{\rm out}_p=R_p[T_{\rm out}^+]$.
Since every adapted primary cluster contains exactly $s/2-1$ out-red $p$-clusters, it follows that $R^{\rm in}_p$ is an $(s/2-1)$-fold blow-up of $\tilde{R}$.
So it is a robust $(\nu',6\tau)$-outexpander.
Similarly, $R^{\rm out}_p$ is a robust $(\nu',6\tau)$-outexpander.

Consider any $S \subseteq T_{\rm in}$ with $6\tau |T_{\rm in}| \le |S| \le (1- 6\tau ) |T_{\rm in}|$.
Note that $T_{\rm in}$ and $T_{\rm out}$ are disjoint (see e.g.~(Red6)).
So $T_{\rm in}$ and $(T_{\rm out}^+)^-$ are disjoint and hence (\ref{defin*}) implies that
\begin{equation} \label{RNS}
|RN^+_{\nu',R^*}(S)|=|RN^+_{\nu',R_p}(S^-) \cap T_{\rm in}| + |(RN^+_{\nu',R_p}(S^-) \cap T_{\rm out}^+)^-|.
\end{equation}
Now let $S^-_{\rm in}$ be obtained from $S^-$ by replacing each $p$-cluster $V \in S^-$ by an arbitrary (but distinct) $p$-cluster $V_{\rm in} \in T_{\rm in}$
which lies in the same adapted primary cluster as $S^-$. Note this is possible as $S \subseteq T_{\rm in}$ implies that $S$ (and thus $S^-$) contains at most $s/2-1$ of the $p$-clusters from each adapted $s$-cluster.
Note that in $R_p$, each cluster receives an edge from $V_{\rm in}$ if and only if it receives an edge from $V$.
So (\ref{defin*}) implies that 
\begin{align*}
|RN^+_{\nu',R_p}(S^-) \cap T_{\rm in}| & = |RN^+_{\nu',R_p}(S^-_{\rm in}) \cap T_{\rm in}|
= |RN^+_{\nu',R_p^{\rm in}}(S^-_{\rm in})| \\
& \ge |S^-_{\rm in}|+ \nu'|R_p^{\rm in}|=|S|+ \nu' |R^*|/2.
\end{align*}
Similarly, let $S^-_{\rm out}$ be obtained from $S^-$ by replacing each $p$-cluster $V \in S^-$ by an arbitrary (but distinct) cluster $V_{\rm out} \in T_{\rm out}^+$ which lies in the same adapted $s$-cluster as $V$.
Then we have
\begin{align*}
|(RN^+_{\nu',R_p}(S^-) \cap T_{\rm out}^+)^-| 
& = |RN^+_{\nu',R_p}(S^-) \cap T_{\rm out}^+|\\
& = |RN^+_{\nu',R_p}(S^-_{\rm out}) \cap T_{\rm out}^+|
 = |RN^+_{\nu',R_p^{\rm out}}(S^-_{\rm out})| \\
&  \ge |S^-_{\rm out}|+ \nu'|R_p^{\rm out}|=|S|+ \nu' |R^*|/2.
\end{align*}
So altogether, we have $|RN^+_{\nu',R^*}(S)| \ge 2|S|+\nu'|R^*|$.

Now suppose that $S \subseteq T_{\rm out}$ with $6\tau |T_{\rm out}| \le |S| \le (1- 6\tau ) |T_{\rm out}|$.
Similarly as above, (\ref{defin*}) implies that 
\begin{equation} \label{RNS2}
|RN^+_{\nu',R^*}(S)|=|RN^+_{\nu',R_p}(S) \cap T_{\rm in}| + |(RN^+_{\nu',R_p}(S) \cap T_{\rm out}^+)^-| \ge 2|S|+\nu'|R^*|.
\end{equation}
Now consider any $S \subseteq V(R^*)$ with $6\tau |R^*| \le |S| \le (1- 6\tau ) |R^*|$. 
Then either $|S \cap T_{\rm in}| \ge |S|/2$ or $|S \cap T_{\rm out}| \ge |S|/2$.
In either case, we get $|RN^+_{\nu',R^*}(S)| \ge |S|+\nu'|R^*|$.
This proves part (i) of the claim.

To prove part (ii),  suppose that $V \in T_{\rm in}$.
Note that $R_p^{\rm in}$ satisfies $\delta^0(R_p^{\rm in}) \ge \tilde{\alpha} |R_p^{\rm in}|/2$ by Lemma~\ref{expanderblowup}(i).
Choose any $V_{\rm in}^- \in T_{\rm in}$ which lies in the same adapted primary cluster as $V^-$.
Then, similarly as observed above, $V_{\rm in}^-$ has the same outneighbours within the set $T_{\rm in}$ 
as $V^-$ (both in the digraph $R_p$). So the degree bound follows for $V$.
The case when $V \in T_{\rm out}$ is similar. This proves Claim~\ref{claimbal}.
\medskip

It is now easy to find shadow balancing sequences $B_i'$ satisfying (B1$'$) and (B2$'$).
Indeed, note that $c \leq n_V^\pm \leq c + \sqrt{\xi} \beta_1 m_p^2$ by (Red3). 
In particular, (\ref{b}) implies that $n_V^+ = c \left( 1 \pm \xi^{3/10} \right)$ and similarly for $n_V^-$. 
Let $R'$ be obtained from $R^*$ by replacing each of the edges of $R^*$ by $b$ copies of this edge and let $n' := |R^*| = (s-2)\tilde{L}$. We will apply Lemma~\ref{regrobust} as follows:
\medskip
\begin{center}
  \begin{tabular}{ r | c | c | c | c | c | c | c }
     & $R^*$ & $R'$ & $n'$ & $b$ & $\xi^{3/10}$ & $\nu'$ & $c/n'$ \\ \hline
    playing the role of & $G$ & $Q$ & $n$ & $q$ & $\eps$ & $\nu$ & $\rho$ \\
  \end{tabular}
\end{center}
\medskip
\noindent
Then
$$
\rho := \dfrac{c}{n'} \stackrel{(\ref{b})}{=} \dfrac{\xi^{1/5}\beta_1 m_p^2}{(s-2)\tilde{L}} \stackrel{(\ref{hierarchy})}{\leq} \dfrac{\xi^{1/6}\beta_1m_p^2 \nu'^2}{3sp\tilde{L}} \stackrel{(\ref{L}),(\ref{b})}{=} \dfrac{b\nu'^2}{3}
$$
as required by Lemma \ref{regrobust}, and we obtain a 
spanning subdigraph $B_i'$ of $R'$ with $d^\pm_{B_i'}(V) = n^\pm_V$ for each $V \in V(R')=V(R^*)$.%

\subsection{Adding balancing sequences} \label{balance}

Note that for each edge $E'$ of $R^*_i$, there is a unique edge $E$ of $R_p$ (from a $p$-cluster $A$ to a $p$-cluster $B$) which corresponds to $E'$. More precisely, (\ref{defin*}) shows that if $E' = VW \in E(R^*_i)$ then
\begin{equation} \label{correspondence}
E =
\left\{
	\begin{array}{ll}
		V^-W  & \mbox{if } V \in T_{\rm in},W \in T_{\rm in} \\
		V^-W^+  & \mbox{if } V \in T_{\rm in},W \in T_{\rm out} \\
		VW  & \mbox{if } V \in T_{\rm out},W \in T_{\rm in} \\
		VW^+ & \mbox{if } V \in T_{\rm out},W \in T_{\rm out}.
	\end{array}
\right.
\end{equation}
(As before, $V^-$ denotes the predecessor of $V$ on $F_i$.)
So for each edge of $B_i'$, we can choose the corresponding edge of $R_p$. For each $i$ and each edge $E$ of $R_p$, let $c_i(E)$ denote the number of times that the edge $E$ is chosen due to $B_i'$.
So $c_i(E) \le b$ by (B1$'$). 
If we now replace the chosen edges $E$ of $R_p$ with $c_i(E)$ edges in $H_2(E)$, this will give the required balancing sequence $B_i$.
However, we need to be careful to ensure that we can do this for every $i$ with $1 \le  i \le r_p$ so that all edges are disjoint. We also wish to maintain (Red4) and (Red6).

We now need to consider the dependence on $i$ again, as clusters in different slices are not quite the same.
Given a base $p$-cluster $A$ in $R_p$, let $A[i]$ be the associated $p$-$[i]$-cluster. 
Each $p$-$[i]$-cluster $A{[i]}$ contains at most one red $2p$-$[i]$-cluster by (Red4). If there is such a subcluster, denote it by $A^*{[i]}$. If there is no such subcluster, let $A^*{[i]}$ be an arbitrary subcluster of $A[i]$. We will only add balancing edges incident to $A^*{[i]}$. 
Let $A^*$ be the base $2p$-cluster associated with $A^*{[i]}$. 
Suppose that $E$ is an edge of $R_p$ from $A$ to $B$. 
Let $\tilde{E} \in E(\tilde{R}(\beta))$ be one of the edges whose blow-up contains $E$;
then $H_2(\tilde{E})$ is $(\eps,\gamma \beta)$-regular as observed in Section~\ref{sec:H}. 
Write $H_2(E^*)$ for the subdigraph of $H_2(\tilde{E})$ induced on $(A^* , B^*)$; then by Lemma~\ref{equipartition}(i) we have that $H_2(E^*)$ is $(\eps',\gamma\beta)$-regular. 

Write $H_2(E^*{[i]})$ for the subdigraph of $H_2(E^*)$ induced on $\left(A^*{[i]}, B^*{[i]}\right)$. Whenever $E$ is chosen due to $B_i'$, we will add balancing edges to $G_i$ from $H_2(E^*{[i]})$. By (\ref{largeint2}) we have that, for all $i$ with $1 \leq i \leq r_p$, $H_2(E^*{[i]})$ is a subdigraph of $H_2(E^*)$ obtained by removing at most $\eps'm_p/2 +1$ vertices from each vertex class.

\begin{claim} \label{claim5} Let $d_0:= 8b/m_p^2$ where $b$ is defined in \emph{(\ref{b})}. Suppose that $H$ is a subdigraph of $H_2(E^*)$ obtained by removing at most $\eps'm_p/2 +1$ vertices from each of $A^*$ and $B^*$ and at most $r_p d_0 m_p$ edges at every vertex.
Then $H$ is $(\xi^{1/15},\gamma\beta)$-regular.
\end{claim}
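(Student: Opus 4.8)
The plan is to obtain Claim~\ref{claim5} as an immediate consequence of Proposition~\ref{superslice}(i). Recall that, as observed just before the claim, $H_2(E^*)$ is $(\eps',\gamma\beta)$-regular and that both of its vertex classes $A^*$ and $B^*$ have size $m_p/2$. I would apply Proposition~\ref{superslice}(i) with $m_p/2$, $\eps'$ and $\gamma\beta$ playing the roles of $m$, $\eps$ and $d$, and with $d':=\xi^{1/7}$ playing the role of the deletion parameter. The conclusion will be that $H$ is $(2\sqrt{\xi^{1/7}},\gamma\beta)=(2\xi^{1/14},\gamma\beta)$-regular; since $\xi$ may be assumed small enough that $2\xi^{1/14}\le\xi^{1/15}$, this yields the claim.

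So the work reduces to checking the hypotheses of Proposition~\ref{superslice}(i). The chain $1/m\le 1/m_p\ll\eps'\ll\xi$ coming from (\ref{hierarchy}) and (\ref{eq:boundrp}), together with the fact that $\xi$ is chosen much smaller than any fixed power of $\gamma\beta$, gives $1/(m_p/2)\ll\eps'\le\xi^{1/7}\le\gamma\beta\ll1$ as required. The number of vertices deleted from each class is at most $\eps'm_p/2+1\le\xi^{1/7}(m_p/2)=d'(m_p/2)$, again using $1/m_p\ll\eps'\ll\xi$. The only genuine computation is the bound on the number of deleted edges per vertex: unravelling $d_0=8b/m_p^2$ and $b=\xi^{1/6}\beta_1 m_p^2/L_p$, and using $r_p\le\tilde\alpha L_p/\beta$ from (\ref{eq:boundrp}) together with $\beta_1\le\beta$, one finds $r_pd_0m_p=8r_p\xi^{1/6}\beta_1m_p/L_p\le 8\xi^{1/6}m_p\le\xi^{1/7}(m_p/2)=d'(m_p/2)$, the last inequality holding because $\xi$ is small. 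Hence Proposition~\ref{superslice}(i) applies and gives the stated regularity.

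I do not expect any real obstacle: the statement is a bookkeeping lemma whose only content is that the definitions of $b$ and $d_0$ leave enough room. The one thing to get right is the exponent arithmetic — that a single deletion parameter $d'$ can be chosen (here $\xi^{1/7}$) so that it simultaneously dominates the vertex-deletion fraction (of order $\eps'\ll\xi$) and the edge-deletion fraction (of order $\xi^{1/6}$) while still satisfying $2\sqrt{d'}\le\xi^{1/15}$; this is comfortable because the exponents $1/6$, $1/7$, $1/14$, $1/15$ are suitably separated and $\xi$ may be taken as small as the hierarchy requires.
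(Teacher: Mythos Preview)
Your proposal is correct and follows exactly the same approach as the paper: both apply Proposition~\ref{superslice}(i) with $d':=\xi^{1/7}$, verifying that the vertex deletions (of order $\eps' m_p$) and the edge deletions (of order $r_p d_0 m_p \le 16\xi^{1/6}\tilde\alpha\, (m_p/2)$) are each bounded by $d'(m_p/2)$, and then noting $2\sqrt{d'}=2\xi^{1/14}\le\xi^{1/15}$. The paper's computation is organised slightly differently---it bounds $2r_p d_0\le\xi^{1/7}$ directly---but the content is identical.
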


\medskip

\noindent
To prove the claim, note first that 
\begin{equation} \label{eqd0}
d_0=\frac{8b}{m_p^2}=\frac{8 \xi^{1/6} \beta_1}{L_p}.
\end{equation}
So 
$$
2 r_p d_0  \stackrel{(\ref{eq:boundrp})}{\le} \frac{16 \xi^{1/6} \beta_1}{L_p} \frac{\tilde{\alpha} L_p}{\beta} \le 16 \xi^{1/6} \tilde{\alpha} \le \xi^{1/7}.
$$
Also $\eps' \ll \xi^{1/7}$.
So Proposition~\ref{superslice}(i) with $\xi^{1/7}$ playing the role of $d'$ implies the claim.%
\COMMENT{cluster size is $m_p/2$ hence the 2s}

\medskip

\noindent
Now for each $i$ in succession we aim to apply Lemma~\ref{embed} to find a set $C_i(E)$ of $c_i(E)$ edges in $H_2(E^*)$, and remove the edges of $C_i(E)$ 
from further consideration. Suppose we have found $C_1(E),\dots,C_{i-1}(E)$ in $H_2(E^*)$. Suppose further that each of these has maximum degree at most $d_0m_p$ and that the edges are from $A^*$ to $B^*$. We now wish to find $C_i(E)$.

Let $H^{i-1}_2(E^*)$ denote the subdigraph of $H_2(E^*)$ obtained by removing the edges of $C_1(E),\dots,C_{i-1}(E)$ and removing any vertex not present in $H_2(E^*{[i]})$.
So $H^{i-1}_2(E^*)$ is also a subdigraph of $H_2(E^*[i])$.
By (\ref{largeint2}), the number of vertices in each vertex class of $H_2^{i-1}(E^*)$ is at most $\eps' m_p/2+1$ less than that in $H_2(E^*)$. Moreover, at most $r_p d_0 m_p$ edges have been removed from each vertex. 
Then Claim \ref{claim5} implies that $H^{i-1}_2(E^*)$ is $(\xi^{1/15},\gamma\beta)$-regular.
So we can apply Lemma~\ref{embed} to find $C_{i}(E)$, with a maximum degree of at most 
$8 c_{i}(E)/m_p \le 8 b/m_p = d_0 m_p$. We continue inductively until we have found $C_1(E),\dots,C_{r_p}(E)$.


Now let $B_i$ be the union of all $C_i(E)$ over all edges $E$ of $R_p$.
Note that the $B_i$ are edge-disjoint by construction.
To verify (B1), note that for all $y \in V(G)\setminus V_{0,i}$,
$$
d^\pm_{B_i}(y) \le  L_p d_0 m_p \stackrel{(\ref{eqd0})}{=} 8 \xi^{1/6} \beta_1 m_p,
$$
as required. (\ref{correspondence}) implies that the clusters that send out shadow balancing edges are precisely $T_{\rm in}^- \cup T_{\rm out}$ and the clusters that receive shadow balancing edges are precisely $T_{\rm in} \cup T_{\rm out}^+$. Suppose that $V \in T_{\rm in}^-$. Then we have that 
$$
d_{B_i}^+(V) \stackrel{(\ref{correspondence})}{=} d_{B_i'}^+(V^+) \stackrel{(\text{B}2')}{=} n_{V^+}^+ = s_i^-(V^+)+c
$$ 
so (B2) holds in this case. The other cases follow similarly.%
\COMMENT{
\begin{align*}
V \in T_{out}:~~~d_{B_i}^+(V) = d_{B_i'}^+(V) = n_{V}^+ = c\\
V \in T_{out}^+:~~~d_{B_i}^-(V) = d_{B_i'}^-(V^-) = n_{V^-}^- = s_i^+(V^-)+c\\
V \in T_{in}:~~~d_{B_i}^-(V) = d_{B_i'}^-(V) = n_{V}^- = c\\
\end{align*}
}
Therefore $B_i$ satisfies (B1) and (B2). Note that
only vertices in a single $2p$-subcluster of each $p$-cluster (which is the red subcluster if one of them is red)
are incident to a balancing edge.

For each $1 \leq i \leq r_p$ we add the edges of $B_i$ to $G_i$. So now $E(G_i)$ consists of edges from each cluster to its unique successor on $F_i$ together with the $i$-red edges incident to $V_{0,i}$ and the balancing edges $B_i$.


\subsection{Almost decomposing into 1-factors} \label{aldecomp}

Our aim now is to use Lemma~\ref{almostreg2} to find a $\kappa$-regular spanning subdigraph of each $G_i$. 
For this, the `balancing property' achieved in Section~\ref{balance} will be crucial. 

Before this, for each $i$, we first remove a subdigraph $H_{3,i}$ of $G_i$, which will be needed in Section~\ref{mergeH}.
We do this as follows.
For each edge $E$ of $F_i$, recall that $G_i(E)$ is $(2\eps',\beta_1)$-superregular by (Red7). 
Apply Lemma~\ref{randomregslice}(ii) to 
$G_i(E)$ with parameters $K := 2$ and $\gamma_1 := \gamma^2\beta_1, \gamma_2 := \beta_2$ where
\begin{equation} \label{eq:beta2}
\beta_2 := (1-\gamma^2)\beta_1
\end{equation}
to obtain two edge-disjoint subdigraphs of $G_i(E)$: a $(2\eps'^{1/12},\gamma^2 \beta_1)$-superregular digraph $H_{3,i}(E)$ and a $(2\eps'^{1/12},\beta_2)$-superregular `remainder' subdigraph which we still denote by $G_i(E)$. We let $H_{3,i}$ have vertex set $V(G)$ and edge set given by the union of $H_{3,i}(E)$ over all edges $E$ of $F_i$.

\medskip

We now continue with finding a $\kappa$-regular spanning subdigraph of each $G_i$. 
Denote the collection of $i$-red edges incident to $V_{0,i}$ by $\mathcal{T}_i$. For each $1 \leq i \leq r_p$ we call the edges in $\mathcal{T}_i \cup B_i$ and any $p$-cluster containing a vertex incident to such an edge $i$-\emph{red} or \emph{red} (so balancing edges are also regarded as red now). Write $d_{i}^\pm(x) := d_{\mathcal{T}_i}^\pm(x) + d_{B_i}^\pm(x)$ for each $x \in V(G_i)$ and define $d_i^\pm(V) = \sum_{x \in V}{d_i^\pm(x)}$ for $V \in V(F_i)$. 
So by (\ref{siV}) we have that, for each $V \in V(F_i)$,
\begin{equation} \label{di}
d_i^\pm(V) = s_i^\pm(V) + d_{B_i}^\pm(V).
\end{equation}
For each $1 \leq i \leq r_p$ we now have the following properties:
\medskip
\begin{enumerate}
\item[(Red0$'$)] There exists a sequence $D_1 x_1 D_2 x_2 \ldots x_{\ell-1} D_{\ell} x_\ell D_1$ with the following properties:
\begin{itemize}
\item Each $D_j$ is a cycle of $F_i$ and every cycle of $F_i$ appears at least once in the sequence;
\item $V_{0,i}^{\rm bridge} = \lbrace x_1 , \ldots , x_\ell \rbrace$ and each $x_j$ has exactly $\kappa$ outneighbours in $D_{j+1}$ and exactly $\kappa$ inneighbours in $D_j$;
\end{itemize}
\item[(Red1$'$)] $d_i^\pm(x) = \kappa$ for each $x \in V_{0,i}$;
\item[(Red2$'$)] $V_{0,i}$ is an independent set in $G_i$;
\item[(Red3$'$)] $d_i^\pm(y) \leq \xi^{1/7} \beta_2 m_p$ for each $y \in G_i \setminus V_{0,i}$;
\item[(Red4$'$)] For every red cluster $V \in R_p$, all $i$-red edges are incident to a single $2p$-cluster contained in $V$. In particular, at most $m_p/2$ vertices in $V$ are incident to an $i$-red edge;
\item[(Red5$'$)] In $F_i$ any out-red $p$-cluster $V$ is preceded by $p-3$ $p$-clusters which are neither out-red nor in-red, and is succeeded by an in-red $p$-cluster. Any in-red $p$-cluster $V$ is succeeded by $p-3$ $p$-clusters which are neither out-red nor in-red, and is preceded by an out-red $p$-cluster;
\item[(Red6$'$)] Each $p$-cluster is either out-red, in-red or contains no vertices incident to a red edge;
\item[(Red7$'$)] $G_1 , \ldots , G_{r_p}$ are edge-disjoint and $G_i(E)$ is $(2\eps^{\prime 1/12} , \beta_2)$-superregular for all $E \in E(F_i)$;
\item[(B2$''$)]  $d^+_i(V) = d^-_i(V^+)$ for all $p$-clusters $V \in V(F_i)$.
\end{enumerate}
\medskip
\noindent 
(Red0$'$), (Red1$'$) and (Red2$'$) follow immediately from (Red0), (Red1) and (Red2) respectively.
(Red3$'$) follows from summing the degrees given by (Red3) and (B1) and using (\ref{eq:beta2}). (Red4$'$) is a consequence of (Red4) and our choice of edges in Section \ref{balance}. (Red5$'$) follows from (Red5) and (B2): indeed, the (red) clusters in $T=T_{\rm in} \cup T_{\rm out}$ are separated by exactly $p-1$ non-red clusters by (Red5), and by (B2), the only other red clusters are precisely those in $T_{\rm in}^- \cup T_{\rm out}^+$.
(Red6$'$) and edge-disjointness in (Red7$'$) follow from (Red6) and edge-disjointness in (Red7), as well as the construction of $B_i$ in Sections~\ref{shadow} and \ref{balance}.
The second part of (Red7$'$) was verified directly after~(\ref{eq:beta2}).
(B2$''$) is a direct consequence of (B2) and (\ref{di}). So for example, if $V \in T_{\rm out}$ then 
$$
d_i^+(V) = s_i^+(V) + c = d_{B_i}^-(V^+) = d_i^-(V^+).
$$%
\COMMENT{$V \in T_{in}$: $d_i^-(V) = s_i^-(V) + c = d_{B_i}^+(V^-) = d_i^+(V^-)$}

Consider any edge $E$ from $V$ to $V^+$ in $F_i$. We wish to find a subdigraph $G_i(E)^*$ of $G_i(E)$ such that, together with the red edges incident to $V$ and $V^+$, every vertex in $V$ has outdegree $\kappa$ and every vertex in $V^+$ has indegree $\kappa$. The union of these subdigraphs over all edges $E \in E(F_i)$, together with the red edges $B_i \cup \mathcal{T}_i$, will form a $\kappa$-regular spanning subdigraph $G_i^*$ of $G_i$.
(Recall that $\kappa$ was defined in~(\ref{eq:kappa}).)

Given any $x \in V$, let $m^+_x = d_i^+(x)$ and given any $y \in V^+$, let $m^-_y = d_i^-(y)$. By (Red3$'$) we have that $m^+_x , m^-_y \leq \xi^{1/7} \beta_2 m_p$ and by (B2$''$) we have that
\[
\sum\limits_{x \in V}{m^+_x} = \sum\limits_{y \in V^+}{m^-_y}.
\]
Let $\hat{\eps} := 2 \eps'^{1/12}$ and $\hat{\beta} := \beta_2 - \hat{\eps}$. So (Red7$'$) implies that $G_i(E)$ is $(\hat{\eps},\hat{\beta} + \hat{\eps})$-superregular for every $E \in E(F_i)$. Let
$$
\hat{\alpha} := 1 - \dfrac{(1-\gamma)\beta_1}{\beta_2 - \hat{\eps}}.
$$
So $\kappa = (1-\hat{\alpha})\hat{\beta} m_p$, and it is easy to see that $\gamma/2 \leq \hat{\alpha} \leq 2\gamma$, so that $\hat{\beta} \ll \hat{\alpha} \ll 1$.
Thus we can apply Lemma~\ref{almostreg2} to $G_i(E)$
with $\hat{\eps}$ playing the role of $\eps$, $\hat{\beta}$ playing the role of $\beta$ and $\hat{\alpha}$ playing the role of $\alpha$.
Then we obtain a spanning subdigraph $G_i(E)^*$ of $G_i(E)$ in which each $x \in V$ has outdegree $\kappa - m^+_x$ and each $y \in V^+$ has indegree $\kappa - m^-_y$.  
Then
\[
G_i^* := \bigcup\limits_{E \in E(F_i)}{G_i(E)^*} \cup B_i \cup \mathcal{T}_i
\]
is a $\kappa$-regular spanning subdigraph of $G_i$ as required. Moreover $G_1^* , \ldots , G_{r_p}^*$ are edge-disjoint subdigraphs of $G$ by (Red7$'$). Now apply Proposition~\ref{petersen} to each $G_i^*$ to obtain $\kappa$ edge-disjoint 1-factors $f_{i,1}, \ldots , f_{i,\kappa}$ of each $G_i$. 


\subsection{Merging 1-factors into Hamilton cycles} \label{mergeH}

The final step is to use edges disjoint from our collection of 1-factors to merge cycles such that each 1-factor is transformed into a Hamilton cycle. Then we will have found an approximate decomposition into edge-disjoint Hamilton cycles.
The argument will be exactly the same for each $G_i$. So 
since we will work within a fixed $G_i$, we will label the $\kappa$ factors obtained from $G_i$ as $f_{1}, \ldots , f_{\kappa}$.
We wish to use Lemma~\ref{trick} and edges from our pre-reserved digraph $H_{3,i} $ to merge the cycles in each $f_j$.

We say that a non-red cluster is \emph{black} and we say that an edge of $F_i$ is \emph{black} if
both the initial cluster and final cluster are black.
So for all black edges $VV^+$ in $F_i$ we have that $f_j[V,V^+]$ is a perfect matching for each $f_j$, since in $G_i$ every edge from a vertex in $V$ goes to a vertex in $V^+$.
(Red5$'$) implies that every pair $U_{\rm out}U_{\rm in}$ of consecutive red clusters on any cycle of $F_i$ is followed by $p-3$ consecutive black clusters. Denote the path of length $p-4$ from the first of these black clusters to the last by $I_U$, so every edge in $I_U$ is black.
So we can choose $p-4$ disjoint sets of edges $J_1,\dots,J_{p-4}$ of $F_i$ so that for each pair of consecutive red clusters $U_{\rm out}U_{\rm in}$, $J_q$ contains exactly one edge of $I_U$. So each $J_q$ consists of exactly $|T| = |T_{\rm in}| + |T_{\rm out}| = (s-2)\tilde{L}$ edges of $F_i$ and has non-empty intersection with any cycle of $F_i$.

The idea is to apply Lemma~\ref{trick} repeatedly to transform each of the $f_j$ into a Hamilton cycle. Each time $H_{3,i}$ will play the role of $G$, and each $J_q$ will play the role of $J$ roughly $\kappa/p$ times. If $\mathcal{E}$ is a set of edges in $F_i$, we write $H_{3,i}(\mathcal{E}) := \bigcup_{E \in \mathcal{E}}{H_{3,i}(E)}$.

We now describe the merging procedure for $f_1$.
Denote the cycles of $F_i$ by $D_1,\dots,D_\ell$.
Let $K_1$ be the $1$-regular digraph consisting of all cycles of $f_1$ which contain a vertex in a cluster of $D_1$.
Now apply Lemma~\ref{trick} as follows: $D_1$ plays the role of $C$, $J_1 \cap E(D_1)$ plays the role of $J$,
$K_1$ plays the role of $F$ and $H_{3,i}(J_1)$ plays the role of $G$.

Condition (i) in Lemma~\ref{trick} is clearly satisfied since every edge of $J_1$ is black.
To verify condition (ii), let $D$ be any cycle of $K_1$.
We claim that $D$ contains a vertex $x$ from a black cluster $B$. 
To see this, suppose that $D$ contains a vertex $y$ which lies in an in-red cluster.
Then the next vertex of $D$ lies in a black cluster.
Similarly, if $y$ lies in an out-red cluster, then the vertex preceding $y$ on $D$ lies in a black cluster, which proves the claim.
Now let $I_U$ be the black interval containing $B$; then there is a path in $D$ (containing $x$) which contains at least one vertex from each cluster in $I_U$. But $J_1 \cap E(D_1)$ contains an edge of $I_U$, as required.

To verify (iii), let $VV^+$ and $WW^+$ be edges of $J_1 \cap E(D_1)$ such that $J_1$ avoids all edges in the segment $V^+D_1W$. Then there is exactly one pair of successive red clusters $U_{\rm out}U_{\rm in}$ in this segment. So for each $v_a \in V^+$ there is a path $P_a$ in $f_1$ from $v_a$ to a distinct vertex $u^{\rm out}_a$ in $U_{\rm out}$ which winds around $D_1$. Similarly, for each $u_{a'}^{\rm in} \in U_{\rm in}$ there is a path $P_{a'}'$ in $f_1$ from $u_{a'}$ to a distinct vertex $w_{a'} \in W$ which winds around $D_1$. But by (Red4$'$), for at least half of the vertices $u_a^{\rm out} \in U_{\rm out}$, there is an edge in $f_1$ to some $u_{a'}^{\rm in} \in U_{\rm in}$. So $f_1$ contains at least one path $v_aP_au_a^{\rm out}u_{a'}^{\rm in}P_{a'}'w_{a'}$ from $v_a \in V^+$ to $w_{a'} \in W$ which winds around $D_1$, as required.  

So we can find a matching $M_1$ in $H_{3,i}(J_1)$ and a cycle $C_1$ with $V(C_1)=V(K_1)$ and $E(C_1) \subseteq K_1 \cup M_1$.
We replace the 1-regular subdigraph $K_1$ of $f_1$ by $C_1$. 
We call the resulting $1$-factor $f_1(1)$ and we denote $H_{3,i} \setminus M_1$ by $H^2_{3,i}$. Note that all cycles of $f_1$ which contained a vertex in $D_1$ have now been merged into a single cycle of $f_1(1)$.

For $2 \leq k \leq \ell$ we define $f_1(k)$ inductively as follows. Let $K_k$ be the $1$-regular digraph consisting of all cycles of $f_1(k-1)$ which contain a vertex in a cluster of $D_k$.
Now let $D_k$ play the role of $C$, $J_1 \cap E(D_k)$ play the role of $J$,
$K_k$ play the role of $F$ and $H_{3,i}(J_1)$ play the role of $G$.
Note that the $k$ choices $J_1 \cap E(D_{k'})$ with $1 \leq k' \leq k$ playing the role of $J$ so far are pairwise vertex-disjoint.
Exactly as above, the conditions (i)-(iii) are satisfied and we can apply Lemma~\ref{trick} to obtain a 1-factor $f_1(k)$ in which all cycles containing a vertex in $D_k$ have been merged. 
Moreover if two vertices $x$ and $y$ lie on a common cycle of $f_1(k-1)$ they lie on a common cycle of $f_1(k)$. 
We repeat this for all $1 \leq k \leq \ell$ to obtain $f_1' := f_1(\ell)$.
We will see below that $f_1'$ is a Hamilton cycle.

We now aim to carry out a similar procedure for $f_2 , \ldots , f_\kappa$ to obtain $f_2' , \ldots , f_\kappa'$. 
The approach will be to use $J_1$ for $f_1, \ldots, f_{\kappa'}$ where $\kappa' := \kappa/(p-4)$ and more generally to use $J_q$ for $f_{(q-1)\kappa'+1}, \ldots, f_{q\kappa'}$. 
Note that, to obtain $f_1'$, we removed exactly one perfect matching from
each $H_{3,i}(E)$ for each edge $E$ of $J_1$.
To reuse $J_1$ we need only check that, at each step and for each edge $E$ of $J_1$, the remainder of the sparse digraph $H_{3,i}(E)$ satisfies the conditions required of $G$ in Lemma~\ref{trick}.  
For this, let $H_{3,i}^t(J_q)$ denote a subdigraph of $H_{3,i}(J_q)$ obtained by removing $t$ arbitrary perfect matchings from $H_{3,i}(E)$ for each $E \in J_q$. 

\begin{claim} \label{minusmatching}
Let $\kappa'$ be defined as above and let $\eps^* := 2\sqrt{\beta_1/p}$. Then $H_{3,i}^{\kappa'}(E)$ is $(\eps^*,\gamma^2\beta_1)$-superregular whenever $E$ is an edge in $J_q$, where $1 \leq q \leq p-4$.
\end{claim}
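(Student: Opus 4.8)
The plan is to deduce the claim directly from Proposition~\ref{superslice}(ii): removing $\kappa'$ perfect matchings from the bipartite graph $H_{3,i}(E)$ deletes no vertices and exactly $\kappa'$ edges at every vertex, so all that is needed is to check that $\kappa'$ is small enough compared with the cluster size $m_p$ and that the regularity parameters line up.

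Recall from the discussion following~(\ref{eq:beta2}) that $H_{3,i}(E)$ is $(2\eps'^{1/12},\gamma^2\beta_1)$-superregular, with vertex classes the $p$-clusters $V$ and $V^+$ joined by $E$ in $F_i$ (each of size essentially $m_p$). The plan is to apply Proposition~\ref{superslice}(ii) with $m_p$, $2\eps'^{1/12}$, $\beta_1/p$ and $\gamma^2\beta_1$ playing the roles of $m$, $\eps$, $d'$ and $d$. The hypotheses $1/m_p\ll 2\eps'^{1/12}\le \beta_1/p\le\gamma^2\beta_1$ all follow from~(\ref{hierarchy}): $1/m_p$ is negligible against every constant there; $2\eps'^{1/12}\le \beta_1/p$ since $\eps'\ll\xi\ll 1/p\ll\beta$ and $\beta_1=(1-5\gamma)\beta$; and $\beta_1/p\le\gamma^2\beta_1$ since $1/p\ll\gamma^2$ (indeed $1/p\ll\beta\ll d\ll 1/s\ll\gamma$).

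The crucial point is the inequality $\kappa'\le (\beta_1/p)\, m_p$. By~(\ref{eq:kappa}) we have $\kappa=(1-\gamma)\beta_1 m_p$, so $\kappa'=\kappa/(p-4)=(1-\gamma)\beta_1 m_p/(p-4)$; hence $\kappa'\le \beta_1 m_p/p$ is equivalent to $(1-\gamma)p\le p-4$, i.e.\ to $p\ge 4/\gamma$, which holds since $1/p\ll\gamma$. Thus $H_{3,i}^{\kappa'}(E)$, obtained from $H_{3,i}(E)$ by removing $\kappa'$ perfect matchings, arises by deleting at most $(\beta_1/p)\,m_p$ edges at each vertex and no vertices, so Proposition~\ref{superslice}(ii) shows it is $(2\sqrt{\beta_1/p},\gamma^2\beta_1)$-superregular; since $2\sqrt{\beta_1/p}=\eps^*$, this is precisely the claim. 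There is no real obstacle here: the statement is essentially a bookkeeping consequence of Proposition~\ref{superslice}, the only care needed being the (routine) verification from~(\ref{hierarchy}) that $2\eps'^{1/12}\le\beta_1/p$ and $p\ge 4/\gamma$.
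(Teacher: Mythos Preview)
Your proposal is correct and follows essentially the same approach as the paper: both apply Proposition~\ref{superslice}(ii) with $d':=\beta_1/p$ after noting that removing $\kappa'$ perfect matchings deletes exactly $\kappa'\le (\beta_1/p)m_p$ edges at each vertex and no vertices. You are in fact slightly more careful than the paper in verifying the hypotheses $2\eps'^{1/12}\le\beta_1/p\le\gamma^2\beta_1$ and in spelling out why $\kappa'\le\beta_1 m_p/p$ (via $p\ge 4/\gamma$), whereas the paper simply invokes~(\ref{eq:kappa}) and~(\ref{hierarchy}).
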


\noindent To see this, it suffices to consider a single edge $E = XY$ in $J_1$. Write $H := H_{3,i}^{\kappa'}(E)$. Then, since at each stage we removed a perfect matching, in total we removed $\kappa'$ edges incident to each vertex in $X \cup Y$, which is at most $\beta_1m_p/p$ by (\ref{eq:kappa}). Since $H_{3,i}(E)$ is $(2\eps'^{1/12},\gamma^2\beta_1)$-superregular (see directly after (\ref{eq:beta2})), we can apply Proposition~\ref{superslice}(ii) with $H_{3,i}(E)$ playing the role of $G$, $H$ playing the role of $G'$ and $d' := \beta_1/p$ to find that $H$ is $(\eps^*,\gamma^2\beta_1)$-superregular. Note that $\eps^* \ll \gamma^2\beta_1$ by (\ref{hierarchy}). This proves the claim.

\medskip
Suppose that we have constructed $f_1', \ldots, f_t'$ with $t < \kappa'$ in the same way as $f_1'$. Then we will have used $t$ perfect matchings in $H_{3,i}(E)$ for each $E \in J_1$. Let $H_{3,i}^t(J_1)$ denote the subdigraph of $H_{3,i}(J_1)$ consisting of the remaining edges. Then Claim~\ref{minusmatching} implies that $H_{3,i}^t(J_1)$ can still play the role of $G$ in Lemma~\ref{trick}. So we can construct $f_{t+1}'$ in the same way as $f_1'$. 
Thus we can obtain $f_1', \ldots, f_{\kappa'}'$ as described above.

Now for each $2 \leq q \leq p-4$ and each $1 \leq t \leq \kappa'$ we can use $J_q$ to obtain $f_{(q-1)\kappa'+t}'$ from $f_{(q-1)\kappa'+t}$
in exactly the same way (except that we use edges from $H_{3,i}(J_q)$ and so $J_q \cap E(D_k)$ now plays the role of $J$ for $1 \leq k \leq \ell$).%
\COMMENT{More precisely, write $f_{(q-1)\kappa'+t}(0) := f_{(q-1)\kappa'+t}$ and $H_{3,i}^0(J_q) := H_{3,i}(J_q)$. For each $1 \leq j \leq \ell$ let $K_{(q-1)\kappa'+t}$ be the 1-regular digraph consisting of all cycles of $f_{(q-1)\kappa'+t}(j-1)$ which contain a vertex in a cluster of $D_j$. Apply Lemma~\ref{trick} with $D_j$ playing the role of $C$, $J_q \cap E(D_j)$ playing the role of $J$, $K_1$ playing the role of $F$ and $H_{3,i}^{t-1}(J_q)$ playing the role of $G$ to obtain $f_{(q-1)\kappa'+t}(j)$.
By Claim~\ref{minusmatching}, $H_{3,i}^t(J_q)$ satisfies the conditions required of $G$ in the lemma.
Exactly as for $f_1$ above, $f_{(q-1)\kappa'+t}(j)$ is a 1-factor in which all cycles containing a vertex in $D_j$ have been merged, and if two vertices lie on a common cycle of $f_{(q-1)\kappa'+t}(j-1)$ they also lie on a common cycle of $f_{(q-1)\kappa'+t}(j)$.
Write $f_{(q-1)\kappa'+t}' := f_{(q-1)\kappa'+t}(\ell)$. 
Now let $H_{3,i}^j(J_q)$ denote the remainder of $H_{3,i}^{j-1}(J_q)$ after these $\ell$ applications of the lemma. }

We have now obtained $f_1' , \ldots , f_\kappa'$. They are clearly edge-disjoint 1-factors. We claim that $f_j'$ is a Hamilton cycle for each $1 \leq j \leq \kappa$. Indeed, suppose not. It suffices to consider $f_1'$. Let $C$ and $C'$ be cycles in $f_1'$ where $C$ contains a vertex $x$ in some cycle $D$ of $F_i$ and $C'$ contains a vertex $x'$ in some cycle $D'$ of $F_i$. Recall that, by our construction, for all cycles $D_k$ in $F_i$, every vertex in (a cluster of) $D_k$ is contained in a single cycle in $f_1'$. Consider the sequence given by (Red0$'$) as a cyclic sequence and pick an interval
\[
D_{g}x_{g}D_{g+1}x_{g+1}\dots x_{g'-1}D_{g'} x_{g'}
\]
such that $D = D_{g}$ and $D' = D_{g'}$. By (Red0$'$) and (Red1$'$), the inneighbour of $x_{g}$ in $f_1'$ is contained in $D$, so $x_{g} \in V(C)$. But similarly the outneighbour of $x_{g}$ in $f_1'$ is contained in $D_{g+1}$, so all vertices lying in a cluster of $D_{g+1}$ are contained in $V(C)$  and thus $x_{g+1} \in V(C)$. Continuing along the subsequence we conclude that
every vertex lying in a cluster of $D'$ lies on $C$. So $x'$ lies on both $C'$ and $C$; so since $f_1'$ is a 1-factor we must have $C = C'$. 
Thus $f_1'$ is a Hamilton cycle, and the same holds for $f_2' , \ldots , f_\kappa'$.    

Finally, we can bound the total number of Hamilton cycles as follows.
Note that
\begin{align*}
\kappa &\stackrel{(\ref{eq:beta_1}),(\ref{eq:m}),(\ref{eq:kappa})}{=} (1-\gamma)(1-5\gamma)\beta \frac{m}{sp}.\\
r_p &\stackrel{(\ref{eq:defr}),(\ref{rs}),(\ref{rp})}{=} (s-1)(p-1)(\tilde{\alpha}-\gamma) \frac{\tilde{L}}{\beta} \geq (1-\sqrt{\gamma})sp \frac{{\tilde{\alpha}\tilde{L}}}{\beta}.
\end{align*}
So altogether, after repeating the procedure for every $1 \leq i \leq r_p$, we have found
\begin{eqnarray}
\nonumber r_p \kappa &\geq& (1-\gamma)(1-5\gamma)(1-\sqrt{\gamma})\tilde{\alpha}\tilde{L}\tilde{m}\\
\nonumber &\stackrel{(\ref{boundL})}{\geq}& (1-\sqrt{\gamma})^3(1-\eps)\tilde{\alpha} n\\
\nonumber &\stackrel{(\ref{hierarchy})}{\geq}& (1-\eta)r
\end{eqnarray}
edge-disjoint Hamilton cycles, as required. This completes the proof of Theorem~\ref{main}.

\section{The proof of Corollary~\ref{cor}} \label{strengthening}

We now use Theorem~\ref{main} and Lemma~\ref{regrobust} to prove Corollary~\ref{cor}.

\begin{proof}
As in the proof of Theorem~\ref{main}, we may assume without loss of generality that $0 < \eta \ll \nu \ll \tau \ll \alpha$. 
Choose $n_0$ and $\gamma$ so that $0 < 1/n_0 \ll \gamma \ll \eta$.
Suppose that $G$ is a digraph on $n \geq n_0$ vertices satisfying (i) and (ii).
Let
\[
n_x^\pm := d_G^\pm(x) - (\alpha - \sqrt{\gamma})n
\]
for each $x \in V(G)$. We apply Lemma~\ref{regrobust} to $G$ with $\rho = \eps = \sqrt{\gamma}$ and with $Q = G$ (so $q=1$) to obtain a subdigraph $H$ of $G$ such that $\tilde{G} := G \setminus H$ is an $(\alpha-\sqrt{\gamma})n$-regular digraph on $n$ vertices. 
Note that for all $x \in V(G)$ we have $d_{\tilde{G}}^-(x) \geq d_G^-(x) - (\sqrt{\gamma} - \gamma)n \geq d_G^-(x) - \nu n/2$.
So for all sets $S$ of vertices,
\[
RN_{\nu/2 , \tilde{G}}^+(S) \supseteq RN_{\nu , G}^+(S).
\]
Thus $\tilde{G}$ is a robust $(\nu/2 , \tau)$-outexpander. Therefore we can apply Theorem~\ref{main} to $\tilde{G}$ with parameter $\eta^\prime := \eta/2\alpha$ to find $(1 - \eta^\prime)(\alpha - \sqrt{\gamma})n > (\alpha - \eta)n$ edge-disjoint Hamilton cycles in $\tilde{G}$ and hence in $G$.
\end{proof}

\section*{Acknowledgements}
We are extremely grateful to Daniela K\"uhn for helpful discussions throughout the project.

\end{document}